\title[Fractional Brauer configuration algebras]{Fractional Brauer configuration algebras I:\\ definitions and examples}
\author{Nengqun Li and Yuming Liu*}
\address{Nengqun Li
\newline School of Mathematics
\newline Liaoning Normal University
\newline Dalian 116029
\newline P.R.China}
\email{wd0843@163.com}
\address{Yuming Liu
\newline School of Mathematical Sciences
\newline Laboratory of Mathematics and Complex Systems
\newline Beijing Normal University
\newline Beijing 100875
\newline P.R.China}
\email{ymliu@bnu.edu.cn}
\date{version of \today}
\newtheorem{Thm}{Theorem}[section]
\newtheorem{Lem}[Thm]{Lemma}
\newtheorem{Def}[Thm]{Definition}
\newtheorem{Cor}[Thm]{Corollary}
\newtheorem{Prop}[Thm]{Proposition}
\newtheorem{Ex1}[Thm]{Example}
\newtheorem{Rem1}[Thm]{Remark}
\newcommand{\lra}{\longrightarrow}
\newcommand{\ra}{\rightarrow}
\newcommand{\sdp}{\times\kern-.2em\vrule height1.1ex depth-.05ex}
\newcommand{\epi}{\lra \kern-.8em\ra}
\begin{document}
\renewcommand{\thefootnote}{\alph{footnote}}
\setcounter{footnote}{-1} \footnote{* Corresponding author.}
\setcounter{footnote}{-1} \footnote{\it{Mathematics Subject
Classification(2020)}: 18Bxx, 16Gxx.}
\renewcommand{\thefootnote}{\alph{footnote}}
\setcounter{footnote}{-1} \footnote{ \it{Keywords}: Fractional Brauer configuration (of type S), Fractional Brauer configuration category (algebra), Locally bounded (special
multiserial) Frobenius category (algebra), Standard representation-finite self-injective algebra.}

\maketitle

\begin{abstract}
In 2017, Green and Schroll introduced a generalization of Brauer graph algebras which they call Brauer configuration algebras. In the present paper, we further generalize Brauer configuration algebras to fractional Brauer configuration algebras by generalizing Brauer configurations to fractional Brauer configurations. The fractional Brauer configuration algebras are locally bounded but neither finite-dimensional nor symmetric in general. We show that if the fractional Brauer configuration is of type S (resp. of type MS), then the corresponding fractional Brauer configuration algebra is a locally bounded Frobenius algebra (resp. a locally bounded special multiserial Frobenius algebra). Moreover, we show that over an algebraically closed field, the class of finite-dimensional indecomposable representation-finite fractional Brauer configuration algebras in type S coincides with the class of basic indecomposable finite-dimensional standard representation-finite self-injective algebras.
\end{abstract}	

\section{Introduction}

In \cite{GS}, Green and Schroll introduced a generalization of Brauer graph
algebras which they call Brauer configuration algebras. As each Brauer graph algebra is defined by a Brauer graph, each Brauer configuration algebra is defined by a Brauer configuration. Both a Brauer graph and a Brauer configuration are given by some combinatorial data, each of which encodes the representation theory of the corresponding algebra.
It is known that over an algebraically closed field, the class of Brauer graph algebras coincides with the class of symmetric special biserial algebras (see for example \cite{S1}) and is closed under derived equivalence (see \cite{AZ}) (here and throughout when we say ``closed under derived equivalence", we always mean ``closed under derived equivalence up to Morita equivalence".); and the class of Brauer configuration algebras coincides with the class of symmetric special multiserial algebras (see \cite{GS2}).

\medskip
{\it In the present paper, we will give a further generalization of Brauer configurations which we call fractional Brauer configurations and define the corresponding fractional Brauer configuration categories and fractional Brauer configuration algebras.}

\medskip
Our motivation comes from two reasons. The first one is that unlike the Brauer graph algebras case, the class of Brauer configuration algebras is not closed under derived equivalence (see Example \ref{example-not-invariant-under-de}). The second one is related to the class of basic indecomposable finite-dimensional representation-finite self-injective algebras (abbr. RFS algebras). It is known that Brauer graph algebras of finite representation type are equal to Brauer tree algebras and form a subclass of RFS algebras of type $A_n$ (see \cite{S2,A}). We find that by allowing to take suitable fractional multiplicities in defining Brauer trees we can include naturally other subclass of RFS algebras of type $A_n$, however the algebras in such subclass are not symmetric any more.

Our first step is to define a fractional Brauer configuration. For simplicity, let us show how to define a fractional Brauer graph. A Brauer graph $\Gamma=(\Gamma_0,\Gamma_1,\mu,\mathfrak{o})$ is defined by a set $\Gamma_0$ of vertices (with a multiplicity function $\mu: \Gamma_0\rightarrow \mathbb{Z}_+$ on it) and a set $\Gamma_1$ of edges with an orientation $\mathfrak{o}$ on edges associated to every vertex. By viewing each edge as a set consisting of two half edges, we can redefine a vertex of $\Gamma$ as an (cyclic-)ordered set of half edges associated to a vertex where the (cyclic) order is induced from the original orientation $\mathfrak{o}$. In other words, we can define a Brauer graph in a new way as follows. Starting from a finite $G$-set $E$ (where $G=\langle g \rangle$ is an infinite cyclic group) whose elements are called half edges, we define the $G$-orbits as vertices of $E$ and the orientation on a vertex is given by the $G$-action, and define the edges of $E$ by a partition $P$ on $E$ such that each class $P(e)$ (here $P(e)$ denotes the equivalence class of $P$ containing $e\in E$) consisting of exactly two half edges. We can define the multiplicity function $\mu: E\rightarrow \mathbb{Z}_+$ directly on the set $E$ of half edges such that $\mu$ is constant on each $G$-orbit $G\cdot e$.

Using the above terminology, we recall that if $A_\Gamma$ is the associated Brauer graph algebra, then the indecomposable projective module associated to an edge $P(e)=\{e,e'\}$ is biserial and one uniserial sequence reads along the $G$-orbit $G\cdot e= \{e,g\cdot e, g^2\cdot e, \cdots\}$ by $\mu(e)$ times, and another uniserial sequence reads along the $G$-orbit $G\cdot e'$ by $\mu(e')$ times. We now use a new function $d: E\rightarrow \mathbb{Z}_+$ such that $d$ is constant on each $G$-orbit, which is called the degree function. And we can define a ``modified Brauer graph algebra" by defining the indecomposable projective module associated to an edge $P(e)=\{e,e'\}$ as follows: it is still biserial and one uniserial sequence reads along the $G$-orbit $G\cdot e=\{e,g\cdot e, g^2\cdot e, \cdots\}$ in $d(e)$ steps, and another uniserial sequence reads along the $G$-orbit $G\cdot e'$ in $d(e')$ steps. It is easy to see that a ``modified Brauer graph algebra" can be well-defined if we request a further condition: $P(e)=P(e')$ if and only if $P(g^{d(e)}\cdot e)=P(g^{d(e')}\cdot e')$ for all $e,e'\in E$. Note that a ``modified Brauer graph algebra" is a Frobenius algebra but not symmetric in general (see Example \ref{example0}). In Brauer graph algebra case, the multiplicity function $\mu(e)$ is equal to $\frac{d(e)}{\mid G\cdot e \mid}$ and $d(e)$ is an integral multiple of $\mid G\cdot e \mid$. However, in ``modified Brauer graph algebra" case, $\frac{d(e)}{\mid G\cdot e \mid}$ can be taken a fractional value, which will be called the fractional-degree at $e$ and is denoted by $d_f(e)$. The data $E=(E,P,d)$ defined as above will be called a fractional Brauer graph of type MS (see Section 3).

Similarly, a Brauer configuration $\Gamma=(\Gamma_0,\Gamma_1,\mu,\mathfrak{o})$ is defined by a set $\Gamma_0$ of vertices (with a multiplicity function $\mu: \Gamma_0\rightarrow \mathbb{Z}_+$ on it) and a set $\Gamma_1$ of polygons (each polygon is a multiset of vertices) with an orientation $\mathfrak{o}$ on polygons associated to every vertex. In order to define a fractional Brauer configuration, we start from a (finite or infinite) $G$-set $E$  (where $G=\langle g \rangle$ is an infinite cyclic group) whose elements are called angles. By similar ideas as above, we can define vertices of $E$ as $G$-orbits of the angles and polygons of $E$ by some partition $P$ of the angles (where $P(e)$ is a finite set for each $e\in E$) and associate a suitable degree function $d: E\rightarrow \mathbb{Z}_+$. We also use another partition $L$ of $E$ with the property that $L(e)\subseteq P(e)$ for each $e\in E$. The data $E=(E,P,L,d)$ defined as above will be called a fractional Brauer configuration. The precise definitions are given in Definition \ref{f-BC} and Remark \ref{remarks-on-the definition-of-f-BC}. Here we use angles rather than vertices and polygons as basic elements, since this avoids to use multisets and the angles can be directly used to define the arrows in the associated quiver of $E$ (see Section 4), and by this way it is also easier to define the morphisms between fractional Brauer configurations and the covering theory for fractional Brauer configurations (which are discussed in forthcoming papers \cite{LL2,LL3}).

We will define various types of fractional Brauer configurations. The types that we are most interested in are fractional Brauer configurations of type S (see Definition \ref{f-BC-type-S}), whose definition use some identification relation for sequences of angles in $E$ (see Definition \ref{parallel standard sequence}). The identification relation is important in our definition since it makes the associated category (or algebra) of $E$ not necessarily multiserial. The fractional Brauer configurations with trivial partition $L$ are called fractional Brauer configurations of type MS (see Definition \ref{f-BC-type-MS}) and they form an important subclass of fractional Brauer configurations of type S and include the fractional Brauer graphs of type MS introduced as above.

Our second step is to associate each fractional Brauer configuration $E=(E,P,L,d)$ a locally bounded $k$-category $\Lambda_E=kQ_E/I_E$ (where $Q_E$ is a locally finite quiver and $I_E$ is an ideal of the path category $kQ_E$), which will be called the fractional Brauer configuration category of $E$. Roughly speaking, the vertices set of $Q_{E}$ is given by polygons $\{P(e)\mid e\in E\}$, and the arrows set is given by $\{L(e)\mid e\in E\}$, where the arrow $L(e)$ has the source $P(e)$ and the terminal $P(g\cdot e)$. The ideal $I_E$ is defined by three kinds of relations which are similar to (but not equal to) defining relations for Brauer configuration algebras. The details are given in Definition \ref{f-BC-category} and in Definition \ref{ideal-I_E}. Note that in general $Q_E$ is not the Gabriel quiver of $\Lambda_E$ and $I_E$ is not an admissible ideal of $kQ_E$. In type S, it is not hard to represent $\Lambda_E$ as $kQ'_E/I'_E$ where $Q'_E$ is the Gabriel quiver of $\Lambda_E$ and $I'_E$ is an admissible ideal of $kQ'_E$ (see Lemma \ref{Gabriel quiver}). Similarly, we can define the fractional Brauer configuration algebra $A_E$ of $E$, which is a locally bounded $k$-algebra (see Definition \ref{f-BCA}).

Our third step is to prove various properties of fractional Brauer configuration categories and fractional Brauer configuration algebras. Among them, we state the following:
\begin{itemize}
	\item[(1)] If $E$ is a fractional Brauer configuration, then $\Lambda_E$ is a locally bounded $k$-category in the sense of Bongartz and Gabriel (see Theorem \ref{f-BC algebra is locally bounded}).
    \item[(2)] If $E$ is a fractional Brauer configuration of type S, then $\Lambda_E$ is a locally bounded Frobenius category (see Definition \ref{Frobenius category} and Theorem \ref{isomorphism of proj and inj}) so that the corresponding module category mod$\Lambda_E$ is a Frobenius category in the sense of Happel.
    \item[(3)] If $E$ is a fractional Brauer configuration of type MS, then $\Lambda_E$ is a locally bounded special multiserial Frobenius category (see Proposition \ref{special multiserial}).
    \item[(4)] If $E$ is a finite fractional Brauer configuration of type S with integral f-degree, then $A_E$ is a finite-dimensional symmetric algebra (see Proposition \ref{symmetric-algebra}). In particular, if $E$ is a finite fractional Brauer configuration of type MS with integral f-degree, then $A_E$ is a Brauer configuration algebra (see Corollary \ref{$f_{ms}$-BCA-of-integral-f-degree-is-equal-to-BCA}).

\medskip
\noindent Both (3)  and (4) can be seen as proper generalizations of the following result in \cite{GS2}: every Brauer configuration algebra is symmetric and special
multiserial.

\medskip
    \item[(5)] Over an algebraically closed field, the class of finite-dimensional representation-finite indecomposable fractional Brauer configuration algebras in type S coincides with the class of basic indecomposable finite-dimensional standard representation-finite self-injective algebras (see Theorem \ref{RFS-algebra=r.f.fsBCA}) and therefore this class of algebras is closed under derived equivalence.
\end{itemize}
	
We summarize the above discussion in the following table. Here we abbreviate Brauer graph algebra, Brauer configuration algebra, fractional Brauer configuration algebra, fractional Brauer configuration algebra in type S, fractional Brauer configuration algebra in type MS and fractional Brauer graph algebra in type MS by BGA, BCA, f-BCA, $f_s$-BCA, $f_{ms}$-BCA and $f_{ms}$-BGA, respectively.

\medskip
\begin{center}
    \begin{tabular}{|c|c|c|c|c|c|c|}
      \hline
       & BGA & BCA & $f_{ms}$-BGA & $f_{ms}$-BCA & $f_s$-BCA & f-BCA  \\
      \hline
      symmetric or not & yes & yes & no & no & no & no             \\
      \hline
      self-injective or not & yes & yes & yes & yes & yes & no             \\
      \hline
      special biserial or not & yes & no & yes & no & no & no             \\
      \hline
      special multiserial or not & yes & yes & yes & yes & no & no             \\
      \hline
    \end{tabular}
  \end{center}

\medskip
This paper is organized as follows. In section 2, we give a quick review on Brauer configuration and Brauer configuration algebra. In Section 3, 4 and 5, we define the fractional Brauer configurations, the associated fractional Brauer configuration categories, and the associated fractional Brauer configuration algebras respectively. In particular, in Section 4 we define the notion of locally bounded Frobenius category and show that the fractional Brauer configuration category in type S is a locally bounded Frobenius category. In Section 6, we determine the Gabriel quivers and admissible relations of fractional Brauer configuration categories in type S over an algebraically closed field. Based on this, we show that the fractional Brauer configuration algebra in type MS is a locally bounded special multiserial Frobenius algebra. In Section 7, based on the description of the standard form of a locally representation-finite category by Bretscher and Gabriel and the criterion to determining when a locally representation-finite category is standard by Mart\'inez-Villa and De La Pe\~na, we describe the class of finite-dimensional indecomposable representation-finite fractional Brauer configuration algebras in type S over an algebraically closed field.

\section*{Data availability} The datasets generated during the current study are available from the corresponding author on reasonable request.

\section*{Acknowledgements} This research is supported by NSFC (No.12031014).

\section{A quick review on Brauer configuration and Brauer configuration algebra}

Throughout this paper, let $k$ be a field. All algebras, modules and categories considered are $k$-linear. When we say a module $M$ over an algebra $A$ we always assume that $M$ is a left $A$-module unless otherwise stated. We recall (in a slightly new way) the definitions of a Brauer configuration and a Brauer configuration algebra from \cite[Section 1.1 and Section 2.2]{GS}.

\begin{Def}\label{BC}
A Brauer configuration (abbr. BC) is a quadruple $\Gamma=(\Gamma_0,\Gamma_1,\mu,\mathfrak{o})$ which is defined as follows.
\begin{itemize}
	\item[(1)] $\Gamma_0$ is a finite set, called the set of vertices;
    \item[(2)] $\Gamma_1$ is a finite collection of finite labelled multisets whose elements are in $\Gamma_0$, which is called the set of polygons, such that each vertex $\alpha\in\Gamma_0$ is contained in some polygon $V\in\Gamma_1$ and each polygon contains at least two vertices;
    \item[(3)] $\mu$ is a function $\Gamma_0\rightarrow\mathbb{Z}_{+}$, which is called the multiplicity function;
    \item[(4)] $\mathfrak{o}$ is an orientation of $\Gamma$, that is, for each vertex $\alpha\in\Gamma_0$, a cyclic order over the set $C_{\alpha}=\{(V,\alpha,i)\mid V\in\Gamma_1$ such that $\alpha$ occurs as a vertex in $V$ and $1\leq i\leq N_{V,\alpha}\}$, where $N_{V,\alpha}$ denotes the number of times that $\alpha$ occurs as a vertex in $V$.
    \item[(5)] For each polygon $V$, there exists a vertex $\alpha\in V$ such that val$(\alpha) \cdot \mu(\alpha)>1$, where val$(\alpha)$ denotes the cardinal of the set $C_{\alpha}$.
\end{itemize}
\end{Def}

\begin{Rem1} \label{remarks-on-BC}
\begin{itemize}
	\item[(1)] We say that a vertex $\alpha\in\Gamma_0$ is truncated if val$(\alpha)\mu(\alpha)=1$; that is, $\alpha$ occurs exactly once in exactly one $V\in\Gamma_1$ and $\mu(\alpha)=1$.
    \item[(2)] A Brauer configuration is called a Brauer graph (abbr. BG) if each polygon of it contains exactly two vertices.
    \item[(3)] Note that in the above definition we have rewritten the orientation in Green and Schroll's original definition using the set $C_\alpha$. We will call the elements of $C_\alpha$ as angles at the vertex $\alpha$ (see and compare Definition \ref{BC-reformulation} below) and this viewpoint is critical in our definition of fractional Brauer configuration.
    \item[(4)] The basic elements in Definition \ref{BC} are vertices and polygons and the new notion of angles are derived from them.
\end{itemize}
\end{Rem1}

\begin{Ex1} \label{example-BC} Let $\Gamma=(\Gamma_0,\Gamma_1,\mu,\mathfrak{o})$ be a BC defined as follows. Let
$$\Gamma_0=\{1, 2, 3\},$$
$$\Gamma_1=\{V_1=\{1, 1, 3\}, V_2=\{1, 2\}\}.$$
We choose $\mu(3)=2$, and $\mu(i)=1$ for all other vertices. The orientation is given by: $(V_1,1,1)<(V_1,1,2)<(V_2,1,1)<(V_1,1,1)$ on $C_1$, $(V_2,2,1) < (V_2,2,1)$ on $C_2$, $(V_1,3,1) < (V_1,3,1)$ on $C_3$.

The above BC can be realized by the following diagram:
\begin{center}
\tikzset{every picture/.style={line width=0.25pt}} %set default line width to 0.75pt
\begin{tikzpicture}[x=10pt,y=10pt,yscale=1,xscale=1]
%uncomment if require: \path (0,314); %set diagram left start at 0, and has height of 314

\draw    (0,0) -- (-5,0) ;
\draw    (0,0) .. controls (3,5) and (7,5) .. (10,0) ;
\draw    (0,0) .. controls (3,-5) and (7,-5) .. (10,0) ;
\draw    (0,0) .. controls (6,4) and (6,-4) .. (0,0) ;
\draw (0,0.8) node [inner sep=0.75pt]    {$1$};
\draw (-5,0.8) node [inner sep=0.75pt]    {$2$};
\draw (11,0) node [inner sep=0.75pt]    {$3$};
\draw (7,0) node [inner sep=0.75pt]    {$V_1$};
\draw (-2.5,0.75) node [inner sep=0.75pt]    {$V_2$};
\fill (0,0) circle (0.5ex);
\fill (-5,0) circle (0.5ex);
\fill (10,0) circle (0.5ex);
\fill (4,3) circle (0.1ex);
\fill (6,3) circle (0.1ex);
\fill (2,2) circle (0.1ex);
\fill (4,2) circle (0.1ex);
\fill (6,2) circle (0.1ex);
\fill (8,2) circle (0.1ex);
\fill (6,1) circle (0.1ex);
\fill (8,1) circle (0.1ex);
\fill (4,-3) circle (0.1ex);
\fill (6,-3) circle (0.1ex);
\fill (2,-2) circle (0.1ex);
\fill (4,-2) circle (0.1ex);
\fill (6,-2) circle (0.1ex);
\fill (8,-2) circle (0.1ex);
\fill (6,-1) circle (0.1ex);
\fill (8,-1) circle (0.1ex);

\end{tikzpicture}
\end{center}
\end{Ex1}

Green and Schroll associate each BC $\Gamma$ a quiver algebra $kQ_{\Gamma}'/I_{\Gamma}'$ in \cite{GS}. Since we do not emphasize the truncated vertices in our formulation, the quiver $Q_{\Gamma}$ and the ideal $I_{\Gamma}$ defined below is slightly different from that in  \cite{GS}, in particular, here $I_{\Gamma}$ is not an admissible ideal in general. However, the quotient algebras $kQ_{\Gamma}/I_{\Gamma}$ and $kQ_{\Gamma}'/I_{\Gamma}'$ are isomorphic.

The vertices of $Q_{\Gamma}$ are in one to one correspondence with the polygons of $\Gamma$. For each vertex $\alpha\in\Gamma_0$ and for $(V_1,\alpha,i_1)$, $(V_2,\alpha,i_2)\in C_{\alpha}$ such that $(V_2,\alpha,i_2)$ is the successor of $(V_1,\alpha,i_1)$, there is an arrow from $v_1$ to $v_2$ in $Q_{\Gamma}$, where $v_i$ is the vertex of $Q_{\Gamma}$ which correspond to the polygon $V_i$ of $\Gamma$ for $i=1,2$.

Let $a$ be an arrow of $Q_{\Gamma}$, which corresponds to the triple $(V',\alpha,i')$ being the successor of the triple $(V,\alpha,i)$ in $C_{\alpha}$. Let the triple $(V'',\alpha,i'')\in C_{\alpha}$ be the successor of the triple $(V',\alpha,i')$ and let $a'$ be an arrow of $Q_{\Gamma}$ which corresponds to $(V'',\alpha,i'')$ being the successor of $(V',\alpha,i')$ in $C_{\alpha}$. Define a bijection $\iota:(Q_{\Gamma})_1\rightarrow (Q_{\Gamma})_1$ by mapping $a$ to $a'$.

For each arrow $a$ of $Q_{\Gamma}$ which corresponds to the triple $(V',\alpha,i')$ being the successor of the triple $(V,\alpha,i)$ in $C_{\alpha}$, define $p(a)=\iota^{n-1}(a)\cdots\iota(a)a$ to be a cycle in the quiver $Q_{\Gamma}$, where $n$ is the smallest positive integer such that $\iota^{n}(a)=a$ ($n$ equals to the cardinal of $C_{\alpha}$). Call $p(a)$ a special $\alpha$-cycle. Let $I_{\Gamma}$ be the ideal of the path algebra $kQ_{\Gamma}$ generated by the following two types of relations:
\begin{itemize}
	\item[$(R1)$] $p(a)^{\mu(\alpha)}-p(b)^{\mu(\beta)}$, where $a$, $b$ are arrows of $Q_{\Gamma}$ which start at the same vertex, and $p(a)$ (resp. $p(b)$) is a special $\alpha$-cycle (resp. a special $\beta$-cycle);
    \item[$(R2)$] $ba$, where $a$, $b$ are arrows of $Q_{\Gamma}$ such that $b\neq\iota(a)$.
\end{itemize}

\begin{Def}{\rm (cf. \cite[Definition 2.5]{GS})} \label{BCA}
The Brauer configuration algebra (abbr. BCA) $\Lambda_{\Gamma}$ associated to $\Gamma$ is defined to be the quotient algebra $kQ_{\Gamma}/I_{\Gamma}$.
\end{Def}

Note that each Brauer configuration algebra is a special multiserial algebra and in particular a multiserial algebra (see \cite{GS2}).

\begin{Ex1}\label{example-BCA} The BCA corresponding to the BC (which we denote by $\Gamma$) in Example \ref{example-BC} is given by the following quiver $Q_{\Gamma}$
\begin{center}
\tikzset{every picture/.style={line width=0.75pt}}
\begin{tikzpicture}[x=20pt,y=18pt,yscale=1,xscale=1]
\node at(0,0) {$v_2$};
\node at(5,0) {$v_1$};
\node at(2.5,1.5) {$a_1$};
\node at(5.7,1.8) {$a_2$};
\node at(2.5,-1.5) {$a_3$};
\node at(-2,0) {$b$};
\node at(5.7,-1.8) {$c$};
\draw[->]    (0.3,0.3) .. controls (1.5,1.5) and (3.5,1.5) .. (4.7,0.3) ;
\draw[->]    (4.7,-0.3) .. controls (3.5,-1.5) and (1.5,-1.5) .. (0.3,-0.3) ;
\draw[->]    (5.1,0.3) .. controls (5,3) and (8,0.5) .. (5.3,0.1) ;
\draw[->]    (5.3,-0.1) .. controls (8,0) and (5,-3) .. (5.1,-0.3) ;
\draw[->]    (-0.2,-0.2) .. controls (-2,-1.5) and (-2,1.5) .. (-0.2,0.2) ;
\end{tikzpicture}
\end{center}
with relations
\begin{multline*}a_3a_2a_1=b, a_1a_3a_2=a_2a_1a_3=c^2, 0=c a_1=a_3a_1=c a_2=a_{2}^{2}=b a_3=a_1 b=a_2 c=a_3 c.\end{multline*}
Therefore, the indecomposable projective modules have the following structures:

\begin{center}
\tikzset{every picture/.style={line width=0.75pt}}
\begin{tikzpicture}
\node at(0,0) {$1$};
\node at(-1,1) {$2$};
\node at(-1,2) {$1$};
\node at(0,3) {$1$};
\node at(0,1) {$1$};
\node at(0,2) {$2$};
\node at(1,1.5) {$1$};
\draw    (-0.2,0.2) -- (-0.8,0.8) ;
\draw    (-1,1.25) -- (-1,1.75) ;
\draw    (-0.8,2.2) -- (-0.2,2.8) ;
\draw    (0,0.25) -- (0,0.75) ;
\draw    (0,1.25) -- (0,1.75) ;
\draw    (0,2.25) -- (0,2.75) ;
\draw    (0.2,0.2) -- (1,1.25) ;
\draw    (1,1.75) -- (0.2,2.8) ;
\node at(3,0) {$2$};
\node at(3,1) {$1$};
\node at(3,2) {$1$};
\node at(3,3) {$2$};
\draw    (3,0.25) -- (3,0.75) ;
\draw    (3,1.25) -- (3,1.75) ;
\draw    (3,2.25) -- (3,2.75) ;
\end{tikzpicture}
\end{center}
\end{Ex1}

\section{Fractional Brauer configurations}

In this section we define fractional Brauer configurations, which are generalization of Brauer configurations. According to \cite{S2}, a Brauer graph can be seen as a ribbon graph with a multiplicity function. Inspired by this, let us first reformulate the definition of a Brauer configuration as follows.

\begin{Def} \label{BC-reformulation}
A Brauer configuration (abbr. BC) is a tuple $\Delta=(\Delta_0,\Delta_1,\zeta,P,\mathfrak{p},\nu)$ which is defined as follows.
\begin{itemize}
	\item[(1)] $\Delta_0$ is a finite set, called the set of vertices;
    \item[(2)] $\Delta_1$ is a finite set, called the set of angles;
    \item[(3)] $\zeta:\Delta_1\rightarrow\Delta_0$ is a surjective map, which is called the connection map;
    \item[(4)] $P$ is a partition of $\Delta_1$, such that $|P(e)|\geq 2$ for every $e\in\Delta_1$ ($P(e)$ is the equivalence class determined by $P$ which contains the angle $e$, which is called a polygon);
    \item[(5)] $\mathfrak{p}$ is a permutation $\Delta_1\rightarrow\Delta_1$ whose cycles have underlying sets $\zeta^{-1}(\alpha)$ with $\alpha\in\Delta_0$, which is called an orientation of $\Delta$;
    \item[(6)] $\nu:\Delta_0\rightarrow\mathbb{Z}_{+}$ is a function, which is called the multiplicity function;
    \item[(7)] For every polygon $P(e)$ of $E$, there exists some $h\in P(e)$ with $\mid\zeta^{-1}(\zeta(h))\mid\cdot\nu(\zeta(h))>1$.
\end{itemize}
	
A Brauer configuration is called a Brauer graph if each polygon of it contains exactly two angles.
\end{Def}

Compare with Definition \ref{BC}, in the above definition we use vertices and angles as the basic elements, and the polygons are derived from them.

\begin{Prop}\label{two-definitions-are-equivalent}
Definition \ref{BC-reformulation} and Definition \ref{BC} are equivalent.
\end{Prop}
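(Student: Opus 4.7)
The plan is to construct explicit mutually inverse maps between the collection of data satisfying Definition \ref{BC} and the collection satisfying Definition \ref{BC-reformulation}, and then to check that under these maps the conditions in one definition correspond exactly to the conditions in the other.

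First I would define a map from Definition \ref{BC}-data to Definition \ref{BC-reformulation}-data. Given $\Gamma=(\Gamma_0,\Gamma_1,\mu,\mathfrak{o})$, put $\Delta_0:=\Gamma_0$ and $\Delta_1:=\bigsqcup_{\alpha\in\Gamma_0}C_{\alpha}$, where $C_\alpha$ is the set of triples as in Definition \ref{BC}(4). Let $\zeta\colon\Delta_1\to\Delta_0$ send $(V,\alpha,i)\mapsto\alpha$; it is surjective because every vertex occurs in some polygon. Define the partition $P$ by declaring $(V,\alpha,i)\sim(V',\beta,j)$ iff $V=V'$; the size condition $|P(e)|\geq 2$ follows from the requirement in Definition \ref{BC}(2) that each polygon contains at least two vertices. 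Define $\mathfrak{p}\colon\Delta_1\to\Delta_1$ on each fibre $\zeta^{-1}(\alpha)=C_\alpha$ by sending each element to its successor under the cyclic order $\mathfrak{o}$; then $\mathfrak{p}$ is a permutation whose cycles are exactly the fibres of $\zeta$. Set $\nu:=\mu$. Condition \ref{BC-reformulation}(7) translates into Definition \ref{BC}(5) via $|\zeta^{-1}(\alpha)|=|C_\alpha|=\mathrm{val}(\alpha)$.

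Conversely, I would define a map from Definition \ref{BC-reformulation}-data to Definition \ref{BC}-data. Given $\Delta=(\Delta_0,\Delta_1,\zeta,P,\mathfrak{p},\nu)$, put $\Gamma_0:=\Delta_0$ and $\mu:=\nu$. For each $P$-class $V$, associate the labelled multiset on $\Gamma_0$ that records, for each $\alpha\in\Delta_0$, the number $|V\cap\zeta^{-1}(\alpha)|$ together with the labelling given by choosing any bijection between $V\cap\zeta^{-1}(\alpha)$ and $\{1,\dots,|V\cap\zeta^{-1}(\alpha)|\}$; let $\Gamma_1$ be the resulting finite collection. With these choices the triples $(V,\alpha,i)$ in $C_\alpha$ are in canonical bijection with $\zeta^{-1}(\alpha)$, and the cyclic order $\mathfrak{o}$ on $C_\alpha$ is the one induced by the restricted permutation $\mathfrak{p}|_{\zeta^{-1}(\alpha)}$, which by hypothesis has underlying set exactly $\zeta^{-1}(\alpha)$ (so it is a single cycle and produces a well-defined cyclic order). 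Condition \ref{BC}(2) that every vertex occurs in some polygon follows from surjectivity of $\zeta$ together with $|P(e)|\geq 2$, and condition \ref{BC}(5) follows from \ref{BC-reformulation}(7).

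Finally I would verify that the two constructions are mutually inverse. In one direction, starting from $\Gamma$, passing to $\Delta$ and back reproduces $\Gamma_0$, the same labelled multisets (each polygon $V$ reappears with the same occurrence count $N_{V,\alpha}=|P(e)\cap\zeta^{-1}(\alpha)|$), the same multiplicity function, and the same cyclic order. In the other direction, starting from $\Delta$, the canonical bijection $C_\alpha\xrightarrow{\sim}\zeta^{-1}(\alpha)$ transports $\mathfrak{o}$ back to the permutation $\mathfrak{p}$, so $\Delta$ is recovered up to the arbitrary labelling choice made when forming the multisets. I expect the only real subtlety is handling this labelling ambiguity cleanly, i.e.\ making precise in what sense two Brauer configurations with different labellings of the same underlying multisets are considered ``the same''; once one agrees that labelled multisets are specified up to relabelling of their entries, the bijection is strict, and the equivalence of the two definitions follows.
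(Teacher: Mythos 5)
Your proposal is correct and follows essentially the same route as the paper: the forward construction ($\Delta_1=\bigsqcup_\alpha C_\alpha$, $\zeta$ the projection, $P$ by polygon, $\mathfrak{p}$ the successor permutation, $\nu=\mu$) and the reverse construction (multisets $[\zeta(P(e))]$ with a chosen labelling, transporting $\mathfrak{p}$ to $\mathfrak{o}$ via the bijection $\zeta^{-1}(\alpha)\cong C_\alpha$) are exactly the paper's. Your explicit attention to the labelling ambiguity and the mutual-inverse check is a reasonable elaboration of what the paper leaves as ``straightforward.''
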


\begin{proof}
Let $\Gamma=(\Gamma_0,\Gamma_1,\mu,\mathfrak{o})$ be a Brauer configuration in Definition \ref{BC}, construct a tuple $\Delta=(\Delta_0,\Delta_1,\zeta,P,\mathfrak{p},\nu)$ as follows: $\Delta_0=\Gamma_0$; $\Delta_1=\bigcup_{\alpha\in\Gamma_0}C_{\alpha}$, where $C_{\alpha}=\{(V,\alpha,i)\mid V\in\Gamma_1$ such that $\alpha$ occurs as a vertex in $V$ and $1\leq i\leq N_{V,\alpha}\}$, $N_{V,\alpha}$ denotes the number of times that $\alpha$ occurs as a vertex in $V$; $\zeta:\Delta_1\rightarrow\Delta_0$ is given by $(V,\alpha,i)\mapsto\alpha$; $P$ is a partition of $\Delta_1$ such that for each angle $(V,\alpha,i)$ in $\Delta_1$, the class $P(V,\alpha,i)$ is equal to $\{(V,\beta,j)\mid\beta\in\Gamma_0$ occurs as a vertex in $V$ and $1\leq j\leq N_{V,\beta}\}$, which is determined by the polygon $V$; $\mathfrak{p}$ is a permutation $\Delta_1\rightarrow\Delta_1$ such that $\mathfrak{p}(V,\alpha,i)=(V',\alpha,i')$, where $(V',\alpha,i')$ is the successor of the triple $(V,\alpha,i)$ under the cyclic order $\mathfrak{o}$ on $C_\alpha$; $\nu=\mu:\Delta_0\rightarrow\mathbb{Z}_{+}$. It is straightforward to show that $\Delta=(\Delta_0,\Delta_1,\zeta,P,\mathfrak{p},\nu)$ is a Brauer configuration under Definition \ref{BC-reformulation}. Note that the condition $(5)$ in Definition \ref{BC} corresponds to the condition $(7)$ in Definition \ref{BC-reformulation}.

Conversely, let $\Delta=(\Delta_0,\Delta_1,\zeta,P,\mathfrak{p},\nu)$ be Brauer configuration in Definition \ref{BC-reformulation}, we define a quadruple $\Gamma=(\Gamma_0,\Gamma_1,\mu,\mathfrak{o})$ as follows: $\Gamma_0=\Delta_0$; $\Gamma_1=\{[\zeta(P(e))]\mid e\in\Delta_1\}$, where $[\zeta(P(e))]$ represents the finite labelled multiset which is the image of the class $P(e)$ under $\zeta$; $\mu=\nu:\Gamma_0\rightarrow\mathbb{Z}_{+}$; In order to define the orientation $\mathfrak{o}$, we first define the set $C_{\alpha}$ of angles at the vertex $\alpha$ (in the sense of Definition \ref{BC}) as follows.

For each vertex $\alpha\in\Gamma_0$, define the set $C_{\alpha}$ to be $\{(V,\alpha,i)\mid V\in\Gamma_1$ such that $\alpha$ appears in $V=[\zeta(P(e))]$ for some $e\in \Delta_1$ and $1\leq i\leq N_{V,\alpha}\}$, where $N_{V,\alpha}$ denotes the number of times that $\alpha$ appears in $V$. Note that for a polygon $V=[\zeta(P(e))]$, $\alpha\in V$ if and only if $\zeta^{-1}(\alpha)\cap P(e)\neq \emptyset$, and $N_{V,\alpha}$ is equal to the cardinal of $\zeta^{-1}(\alpha)\cap P(e)$. For each polygon $P(e)$ of $\Delta$ with $\zeta^{-1}(\alpha)\cap P(e)\neq \emptyset$, label the elements of $\zeta^{-1}(\alpha)\cap P(e)$ by $e_1,\cdots,e_s$, and define a map $\phi_{P(e)}:\zeta^{-1}(\alpha)\cap P(e)\rightarrow C_{\alpha}$ by $\phi_{P(e)}(e_i)=([\zeta(P(e))],\alpha,i)$. Since $\zeta^{-1}(\alpha)$ is a disjoint union of subsets of the form $\zeta^{-1}(\alpha)\cap P(e)$, the maps $\phi_{P(e)}:\zeta^{-1}(\alpha)\cap P(e)\rightarrow C_{\alpha}$ induce a map $\phi:\zeta^{-1}(\alpha)\rightarrow C_{\alpha}$, which is bijective. Then the cyclic order $\mathfrak{o}$ on $C_{\alpha}$ is induced by the permutation $\mathfrak{p}$ on $\zeta^{-1}(\alpha)$. Also the condition $(7)$ in Definition \ref{BC-reformulation} corresponds to the condition $(5)$ in Definition \ref{BC}.
\end{proof}

In view of Definition \ref{BC-reformulation}, we can also reformulate the definition of a Brauer configuration algebra as follows. For a Brauer configuration $\Delta=(\Delta_0,\Delta_1,\zeta,P,\mathfrak{p},\nu)$, define a quiver $Q_{\Delta}$ as follows:
\begin{itemize}
	\item The vertices of $Q_{\Delta}$ correspond to the polygons of $\Delta$;
    \item The arrows of $Q_{\Delta}$ are given by the angles of $\Delta$ by the following way: given an angle $e\in \Delta_1$ with connection vertex $\alpha=\zeta(e)\in \Delta_0$, there is an arrow from the polygon $P(e)$ to the polygon $P(h)$, where $h=\mathfrak{p}(e)$, that is, $h$ is the successor angle of $e$ under the permutation around the vertex $\alpha$.
\end{itemize}
In the following, we will identify the angle $e$ with the corresponding arrow in $Q_{\Delta}$ and write $s(e)=P(e), t(e)=P(h)$ for the arrow $e$. Let $e$ be an arrow of $Q_{\Delta}$, define $p(e)=\mathfrak{p}^{n-1}(e)\cdots\mathfrak{p}(e)e$ to be the cycle in the quiver $Q_{\Delta}$, where $n$ is the smallest positive integer such that $\mathfrak{p}^{n}(e)=e$. Let $I_{\Delta}$ be the ideal of the path algebra $kQ_{\Delta}$ generated by the following two types of relations:
\begin{itemize}
	\item[$(R1)$] $p(e)^{\nu(\zeta(e))}-p(h)^{\nu(\zeta(h))}$, where $e$, $h$ are arrows of $Q_{\Delta}$ with $P(e)=P(h)$;
    \item[$(R2)$] $e_2 e_1$, where $e_1$, $e_2$ are arrows of $Q_{\Delta}$ such that $e_2\neq\mathfrak{p}(e_1)$.
\end{itemize}
The Brauer configuration algebra $\Lambda_{\Delta}$ associated to $\Delta$ is defined to be $kQ_{\Delta}/I_{\Delta}$.

\medskip
We now introduce the notion of fractional Brauer configuration.

\begin{Def}\label{f-BC}
Let $G=\langle g \rangle$ be an infinite cyclic group. A fractional Brauer configuration (abbr. f-BC) is a quadruple $E=(E,P,L,d)$, where $E$ is a $G$-set, $P$ and $L$ are two partitions of $E$, and $d: E\rightarrow \mathbb{Z}_{+}$ is a function, such that the following conditions hold.
\begin{itemize}
	\item[$(f1)$] $L(e)\subseteq P(e)$ and $P(e)$ is a finite set for each $e\in E$.
    \item[$(f2)$] If $L(e_1)=L(e_2)$, then $P(g\cdot e_1)=P(g\cdot e_2)$.
    \item[$(f3)$] If $e_1$, $e_2$ belong to same $\langle g \rangle$-orbit, then $d(e_1)=d(e_2)$.
    \item[$(f4)$] $P(e_1)=P(e_2)$ if and only if $P(g^{d(e_1)}\cdot e_1)=P(g^{d(e_2)}\cdot e_2)$.
    \item[$(f5)$] $L(e_1)=L(e_2)$ if and only if $L(g^{d(e_1)}\cdot e_1)=L(g^{d(e_2)}\cdot e_2)$.
    \item[$(f6)$] The formal sequence $L(g^{d(e)-1}\cdot e)\cdots L(g\cdot e)L(e)$ is not a proper subsequence of the formal sequence $L(g^{d(h)-1}\cdot h)\cdots L(g\cdot h)L(h)$ for all $e,h\in E$.
\end{itemize}
\end{Def}

\begin{Rem1} \label{remarks-on-the definition-of-f-BC} We need some further remarks on the above definition as follows.
\begin{itemize}
	\item[(1)] The elements in $E$ are called angles of the f-BC. The $\langle g \rangle$-orbits of $E$ are called vertices of the f-BC. In other words, every vertex of the f-BC is of the form $G\cdot e$ where $e\in E$ is an angle.
    \item[(2)] For each vertex $v$, the $\langle g \rangle$-set structure of $v$ gives an order on it, which is a total order when $v$ is infinite and is a cyclic order when $v$ is finite. The order given by $\langle g\rangle$-action on each vertex of f-BC can be regarded as a generalization of the orientation $\mathfrak{o}$ of BC.
    \item[(3)] The classes $P(e)$ of the partition $P$ are called polygons. Note that we allow the cardinality of $P(e)$ to be 1, that is, $P(e)$ can be a 1-gon, which is different from the BC case.
    \item[(4)] Condition $(f1)$ means that any two angles $e_1,e_2$ of the class $L(e)$ lie in the same polygon $P(e)$, and Condition $(f2)$ means that their successors $g\cdot e_1, g\cdot e_2$ also lie in the same polygon $P(g\cdot e)$. The partition $L$ is said to be trivial if $L(e)=\{e\}$ for each $e\in E$.
    \item[(5)] The function $d: E\rightarrow\mathbb{Z}_{+}$ is called degree function. Condition $(f3)$ means that the degree function can be defined on vertices.
    \item[(6)] Let $E$ be an f-BC and $v$ be a vertex such that $v$ is a finite set, define the fractional-degree (abbr. f-degree) $d_f(v)$ of the vertex $v$ to be the rational number $\frac{d(v)}{\mid v\mid}$. If the f-degree of each vertex of $E$ is an integer, then $E$ is said to have integral f-degree. Moreover, if $d_f(v)=1$ for each vertex $v$, then $E$ is called f-degree trivial.
    \item[(7)] Denote $\sigma$ the map $E\rightarrow E$, $e\mapsto g^{d(e)}\cdot e$, which will be called the Nakayama automorphism of $E$. This automorphism is guaranteed by the conditions $(f3)$, $(f4)$ and $(f5)$.
\end{itemize}
\end{Rem1}

Each BC $\Delta=(\Delta_0,\Delta_1,\zeta,P,\mathfrak{p},\nu)$ can be seen as an f-BC $E=(E,P,L,d)$ with integral f-degree and trivial partition $L$ by the following procedure: Let $E=\Delta_1$ such that the $\langle g \rangle$-action on $E$ is induced by the permutation $\mathfrak{p}$ on $\Delta_1$; the partition $P$ of $E$ is just the partition $P$ of $\Delta_1$; the partition $L$ of $E$ is defined to be trivial; the degree function $d$ is given by $d(e)=\nu(\zeta(e))\cdot\mid\zeta^{-1}(\zeta(e))\mid$. Then the multiplicity function $\nu$ is equal to the f-degree $d_f$.

On the other hand, if we allow the multiplicity function $\nu$ in a BC $\Delta=(\Delta_0,\Delta_1,\zeta,P,\mathfrak{p},\nu)$ to take fractional values such that $\nu(v)$ is an integral multiple of $\frac{1}{\mid\zeta^{-1}(v)\mid}$ for each vertex $v\in\Delta_0$, then under some suitable conditions we can still form an f-BC along the above procedure (see Example \ref{example0}).  Note that in BC case, a vertex $v$ is truncated (cf. Remark \ref{remarks-on-BC} (1)) if and only if $d(v)=1$.

\medskip
We illustrate our definition of f-BC with some examples.

\begin{Ex1}\label{example0}
Let $E=\{1,1',2,2',3,3'\}$. Define the group action on $E$ by $g\cdot 1=2$, $g\cdot 2=3$, $g\cdot 3=1$, $g\cdot 1'=2'$, $g\cdot 2'=3'$, $g\cdot 3'=1'$. Define $P(1)=\{1,1'\}$, $P(2)=\{2,2'\}$, $P(3)=\{3,3'\}$ and $L(e)=\{e\}$ for every $e\in E$. The degree $d$ of $E$ is defined by $d(e)=2$ for every $e\in E$. So the f-degree of $E$ has constant value $\frac{2}{3}$.
\end{Ex1}

\begin{Ex1}\label{example1}
Let $E=\{1,1',2,2',3,3',4,4'\}$. Define the group action on $E$ by $g\cdot 1=2$, $g\cdot 2=3$, $g\cdot 3=1$, $g\cdot 1'=2'$, $g\cdot 2'=4'$, $g\cdot 4'=1'$, $g\cdot 3'=3'$, $g\cdot 4=4$. Define $P(1)=\{1,1'\}$, $P(2)=\{2,2'\}$, $P(3)=\{3,3'\}$, $P(4)=\{4,4'\}$, $L(1)=\{1,1'\}$ and $L(e)=\{e\}$ for $e\neq 1,1'$. The f-degree of $E$ is defined to be trivial.
\end{Ex1}

\begin{Ex1}\label{example2}
Let $E=\{a_1,a_2,a_3,a_4\}$. Define the group action on $E$ by $g\cdot a_1=a_1$, $g\cdot a_2=a_3$, $g\cdot a_3=a_2$, $g\cdot a_4=a_4$. Define $P(a_1)=E$, $L(a_1)=\{a_1,a_2\}$, $L(a_3)=\{a_3,a_4\}$. Define $d(a_1)=d(a_4)=2$, $d(a_2)=d(a_3)=4$. So the f-degree of $E$ has constant value $2$.
\end{Ex1}

\begin{Ex1}\label{example3}
Let $E=\{1,1',1'',2,2',3,4,4',4'',5,5',6\}$. Define the group action on $E$ by $g\cdot 1=2$, $g\cdot 2=4$, $g\cdot 4=5$, $g\cdot 5=1$, $g\cdot 1'=2'$, $g\cdot 2'=4'$, $g\cdot 4'=6$, $g\cdot 6=1'$, $g\cdot 1''=3$, $g\cdot 3=4''$, $g\cdot 4''=5'$, $g\cdot 5'=1''$. Define $P(1)=\{1,1',1''\}$, $P(2)=\{2,2'\}$, $P(3)=\{3\}$, $P(4)=\{4,4',4''\}$, $P(5)=\{5,5'\}$, $P(6)=\{6\}$, $L(1)=\{1,1'\}$, $L(2)=\{2,2'\}$, $L(4)=\{4,4''\}$, $L(5)=\{5,5'\}$, $L(e)=\{e\}$ for other $e\in E$. The f-degree of $E$ is defined to be trivial.
\end{Ex1}

\begin{Ex1}\label{example4}
Let $E=\{i\mid i\in\mathbb{Z}\}\cup\{i'\mid i\in 3\mathbb{Z}\}$. Define the group action on $E$ by
$$
g\cdot e= \begin{cases}
i+1, &  \text{ if } e=i \text{ with } i\equiv 0 \text{ }(\text{mod } 3); \\
i+2, &  \text{ if } e=i \text{ with } i\equiv 1 \text{ }(\text{mod } 3) \text{ or if } e=i' \text{ with } i\equiv 0 \text{ }(\text{mod } 3); \\
(i+1)', &  \text{ if } e=i \text{ with } i\equiv 2 \text{ }(\text{mod } 3).
\end{cases}
$$
Define
$$
P(e)= \begin{cases}
\{i,i'\}, &  \text{ if } e=i \text{ with } i\equiv 0 \text{ }(\text{mod } 3); \\
\{e\}, &  \text{ otherwise};
\end{cases}
$$
and $L(e)=\{e\}$ for every $e\in E$. The degree $d$ of $E$ is given by $d(e)=2$ for every $e\in E$.
\end{Ex1}

Every fractional Brauer configuration $E=(E,P,L,d)$ can be visualized as some diagram $\Gamma(E)$, which consists of polygons such that every two polygons can only intersect on their vertices. Each polygon in $\Gamma(E)$ corresponds to a polygon (of the form $P(e)$) of $E$, and each $e\in E$ corresponds to an angle of the polygon $P(e)$ in $\Gamma(E)$. Two angles of polygons in $\Gamma(E)$ are connected to the same vertex of $\Gamma(E)$ if and only if the two corresponding elements of $E$ belong to the same $\langle g \rangle$-orbit of $E$. For each vertex $v=G\cdot e$ of $\Gamma(E)$, the $\langle g\rangle$-set structure of $E$ gives an order (which is always taken to be clockwise) to angles of polygons in $\Gamma(E)$ which are connected to $v$.

Note that if a polygon $P(e)$ has cardinality 1, then $P(e)$ has only one angle and it corresponds to a half edge in $\Gamma(E)$; and if $P(e)$ has cardinality 2, then $P(e)$ has two angles and they correspond to one edge (or two half edges) in $\Gamma(E)$. The f-BCs in Examples \ref{example0}, \ref{example1}, \ref{example2}, \ref{example3}, \ref{example4} can be visualized by the following diagrams

\vspace{0.5cm}
\begin{center}
\tikzset{every picture/.style={line width=0.75pt}}
\begin{tikzpicture}[x=15pt,y=15pt,yscale=1,xscale=1]
\fill (0,0) circle (0.5ex);
\fill (5,0) circle (0.5ex);
\node at(-1.5,1) {$1$};
\node at(-0.1,1) {$2$};
\node at(1,0.2) {$3$};
\node at(6.6,1) {$3'$};
\node at(5.2,1) {$2'$};
\node at(4,0.2) {$1'$};
\draw    (0,0) .. controls (-4,3) and (1,3) .. (5,0) ;
\draw    (0,0) .. controls (0,3) and (5,3) .. (5,0) ;
\draw    (0,0) .. controls (4,3) and (9,3) .. (5,0) ;
\end{tikzpicture}
\end{center}

\vspace{0.5cm}
\begin{center}
\begin{tikzpicture}
\draw[rounded corners] (1,-1)--(2,0)--(5,0)--(4,-1);
\draw[rounded corners] (1,-1)--(2,-2)--(5,-2)--(4,-1);
\draw (-0.5,-1)--(1,-1);
\draw (2.5,-1)--(4,-1);
\node at(1.5,-0.3) {$1$};
\node at(4.9,-0.4) {$1'$};
\node at(1.5,-1.7) {$2$};
\node at(4.9,-1.6) {$2'$};
\node at(-0.2,-0.8) {$3'$};
\node at(0.7,-0.8) {$3$};
\node at(2.8,-0.8) {$4$};
\node at(3.7,-0.8) {$4'$};
\fill (1,-1) circle (0.5ex);
\fill (4,-1) circle (0.5ex);
\fill (-0.5,-1) circle (0.5ex);
\fill (2.5,-1) circle (0.5ex);
\end{tikzpicture}
\end{center}

\vspace{0.5cm}
\begin{center}
\tikzset{every picture/.style={line width=0.75pt}}
\begin{tikzpicture}[x=15pt,y=15pt,yscale=1,xscale=1]
\fill (-2,5) circle (0.5ex);
\fill (2,5) circle (0.5ex);
\fill (0,0) circle (0.5ex);
\draw    (-2,5) -- (2,5) ;
\draw    (0,0) .. controls (-3,2) and (-3,3) .. (-2,5) ;
\draw    (0,0) .. controls (3,2) and (3,3) .. (2,5) ;
\draw    (0,0) .. controls (-3,3) and (3,3) .. (0,0) ;
\node at(-2.5,5.2) {$a_1$};
\node at(2.5,5.2) {$a_4$};
\node at(-1,0.2) {$a_2$};
\node at(1,0.2) {$a_3$};
\node at(0,3) {$P(a_1)$};
\fill (-1.5,4.5) circle (0.1ex);
\fill (-0.5,4.5) circle (0.1ex);
\fill (0.5,4.5) circle (0.1ex);
\fill (1.5,4.5) circle (0.1ex);
\fill (-1.5,3.7) circle (0.1ex);
\fill (-0.5,3.7) circle (0.1ex);
\fill (0.5,3.7) circle (0.1ex);
\fill (1.5,3.7) circle (0.1ex);
\fill (-1.5,3) circle (0.1ex);
\fill (1.5,3) circle (0.1ex);
\fill (-1.5,2.3) circle (0.1ex);
\fill (-0.5,2.3) circle (0.1ex);
\fill (0.5,2.3) circle (0.1ex);
\fill (1.5,2.3) circle (0.1ex);
\fill (-1.5,1.5) circle (0.1ex);
\fill (1.5,1.5) circle (0.1ex);
\end{tikzpicture}
\end{center}

\vspace{0.5cm}
\begin{center}
\tikzset{every picture/.style={line width=0.75pt}} %set default line width to 0.75pt

\begin{tikzpicture}[x=0.75pt,y=0.75pt,yscale=-1,xscale=1]
%uncomment if require: \path (0,378); %set diagram left start at 0, and has height of 378

%Shape: Circle [id:dp6482326854816789]
\draw  [fill={rgb, 255:red, 0; green, 0; blue, 0 }  ,fill opacity=1 ] (320,97.5) .. controls (320,94.46) and (322.46,92) .. (325.5,92) .. controls (328.54,92) and (331,94.46) .. (331,97.5) .. controls (331,100.54) and (328.54,103) .. (325.5,103) .. controls (322.46,103) and (320,100.54) .. (320,97.5) -- cycle ;
%Shape: Circle [id:dp8722661582167257]
\draw  [fill={rgb, 255:red, 0; green, 0; blue, 0 }  ,fill opacity=1 ] (321,187) .. controls (321,184.24) and (323.24,182) .. (326,182) .. controls (328.76,182) and (331,184.24) .. (331,187) .. controls (331,189.76) and (328.76,192) .. (326,192) .. controls (323.24,192) and (321,189.76) .. (321,187) -- cycle ;
%Shape: Circle [id:dp5415159913450924]
\draw  [fill={rgb, 255:red, 0; green, 0; blue, 0 }  ,fill opacity=1 ] (328,302) .. controls (328,299.79) and (329.79,298) .. (332,298) .. controls (334.21,298) and (336,299.79) .. (336,302) .. controls (336,304.21) and (334.21,306) .. (332,306) .. controls (329.79,306) and (328,304.21) .. (328,302) -- cycle ;
%Straight Lines [id:da2554891363429517]
\draw    (326,187) -- (383,206) ;
%Straight Lines [id:da22139437420669328]
\draw    (326,187) -- (379,177) ;
%Straight Lines [id:da4735446660042979]
\draw    (276,173) -- (326,187) ;
%Straight Lines [id:da13338082589390976]
\draw    (279,194) -- (326,187) ;
%Straight Lines [id:da2747683361657176]
\draw    (285,310) -- (332,302) ;
%Straight Lines [id:da3403126433936523]
\draw    (284,294) -- (332,302) ;
%Straight Lines [id:da02809507010813328]
\draw    (375,295) -- (332,302) ;
%Straight Lines [id:da4325077468969849]
\draw    (332,302) -- (378,309) ;
%Straight Lines [id:da9035206605704178]
\draw    (263,110) -- (325.5,97.5) ;
%Straight Lines [id:da6679758526191082]
\draw    (266,90) -- (325.5,97.5) ;
%Straight Lines [id:da8856897242797797]
\draw    (325.5,97.5) -- (376,108) ;
%Straight Lines [id:da900913512255457]
\draw    (325.5,97.5) -- (378,91) ;
%Curve Lines [id:da6254252667647553]
\draw    (376,108) .. controls (437,116) and (418,176) .. (379,177) ;
%Curve Lines [id:da2106549937194997]
\draw    (375,295) .. controls (443,275) and (432,219) .. (383,206) ;
%Curve Lines [id:da8633859163784772]
\draw    (378,91) .. controls (495,81) and (516,317) .. (378,309) ;
%Curve Lines [id:da39035702407704753]
\draw    (276,173) .. controls (241,169) and (232,118) .. (263,110) ;
%Curve Lines [id:da6207743970815653]
\draw    (284,294) .. controls (247,286) and (229,205) .. (279,194) ;
%Curve Lines [id:da6545930492297507]
\draw    (285,310) .. controls (193,319) and (146,84) .. (266,90) ;
%Straight Lines [id:da5427015714753514]
\draw    (325,71) -- (325.5,97.5) ;
%Straight Lines [id:da0765883210924918]
\draw    (325.5,97.5) -- (327,137) ;
%Straight Lines [id:da7475113857871238]
\draw    (326,187) -- (326,217) ;
%Straight Lines [id:da9711425850166147]
\draw    (326,157) -- (326,187) ;
%Straight Lines [id:da0824378975694724]
\draw    (332,265) -- (332,302) ;
%Straight Lines [id:da8019729034314078]
\draw    (332,302) -- (332,336) ;
%Curve Lines [id:da704202797886671]
\draw    (325,71) .. controls (316,35) and (266,86) .. (295,124) ;
%Curve Lines [id:da19025649540430978]
\draw    (295,124) .. controls (307,138) and (319,144) .. (326,157) ;
%Curve Lines [id:da1892550022063686]
\draw    (403,339) .. controls (443,309) and (342,281) .. (326,217) ;
%Curve Lines [id:da5373811733152241]
\draw    (403,339) .. controls (396,348) and (338,376) .. (332,336) ;

% Text Node
\draw (375,90) node [anchor=north west][inner sep=0.75pt]   [align=left] {$1''$};
% Text Node
\draw (262,91) node [anchor=north west][inner sep=0.75pt]   [align=left] {$4''$};
% Text Node
\draw (278,176) node [anchor=north west][inner sep=0.75pt]   [align=left] {$4$};
% Text Node
\draw (362,183) node [anchor=north west][inner sep=0.75pt]   [align=left] {$1$};
% Text Node
\draw (278,293) node [anchor=north west][inner sep=0.75pt]   [align=left] {$4'$};
% Text Node
\draw (377,292) node [anchor=north west][inner sep=0.75pt]   [align=left] {$1'$};
% Text Node
\draw (325,65) node [anchor=north west][inner sep=0.75pt]   [align=left] {$5'$};
% Text Node
\draw (328,114) node [anchor=north west][inner sep=0.75pt]   [align=left] {$3$};
% Text Node
\draw (314,154) node [anchor=north west][inner sep=0.75pt]   [align=left] {$5$};
% Text Node
\draw (315,197) node [anchor=north west][inner sep=0.75pt]   [align=left] {$2$};
% Text Node
\draw (321,276) node [anchor=north west][inner sep=0.75pt]   [align=left] {$6$};
% Text Node
\draw (317,312) node [anchor=north west][inner sep=0.75pt]   [align=left] {$2'$};
% Text Node
\draw (215,176) node [anchor=north west][inner sep=0.75pt]   [align=left] {$P(4)$};
% Text Node
\draw (416,177) node [anchor=north west][inner sep=0.75pt]   [align=left] {$P(1)$};

\end{tikzpicture}
\end{center}

\vspace{0.5cm}
\begin{center}
\tikzset{every picture/.style={line width=0.75pt}}
\begin{tikzpicture}[x=18pt,y=18pt,yscale=1,xscale=1]
\fill (0,0) circle (0.5ex);
\fill (5,0) circle (0.5ex);
\fill (0.3,1.7) circle (0.1ex);
\fill (0,1.9) circle (0.1ex);
\fill (-0.3,2.1) circle (0.1ex);
\fill (4.8,1.8) circle (0.1ex);
\fill (4.5,1.9) circle (0.1ex);
\fill (4.2,2) circle (0.1ex);
\fill (0,-1.5) circle (0.1ex);
\fill (-0.3,-1.5) circle (0.1ex);
\fill (-0.6,-1.5) circle (0.1ex);
\fill (5,-1.3) circle (0.1ex);
\fill (4.7,-1.3) circle (0.1ex);
\fill (4.4,-1.3) circle (0.1ex);
\node at(0,1) {$0$};
\node at(4.8,1.2) {$0'$};
\node at(1,1.1) {$1$};
\node at(6,1.4) {$2$};
\node at(1.5,0.3) {$3$};
\node at(6,0.3) {$3'$};
\node at(1.2,-0.4) {$4$};
\node at(6.2,-0.5) {$5$};
\node at(1.1,-1.2) {$6$};
\node at(6.2,-1.2) {$6'$};
\node at(0.6,-1.7) {$7$};
\node at(5.2,-1.7) {$8$};
\draw    (0,0) .. controls (1,4) and (6,4) .. (5,0) ;
\draw    (0,0) -- (1.5,1.5) ;
\draw    (5,0) -- (6.5,1.5) ;
\draw    (0,0) .. controls (4,1) and (9,1) .. (5,0) ;
\draw    (0,0) -- (2,-0.6) ;
\draw    (5,0) -- (7,-0.6) ;
\draw    (0,0) .. controls (3,-3) and (8,-3) .. (5,0) ;
\draw    (0,0) -- (0.5,-2) ;
\draw    (5,0) -- (5.5,-2) ;
\end{tikzpicture}
\end{center}

\begin{Def}\label{standard sequence}
Let $E$ be an f-BC.
\begin{itemize}
\item[(1)] Let $n$ be a positive integer. We call $p=(e_{n},\cdots,e_2,e_1)$ a sequence of angles of length $n$ in $E$, if $e_i'$s are angles in $E$ and $P(g\cdot e_i)=P(e_{i+1})$ for all $1\leq i\leq n-1$. Moreover, for every $e\in E$, we call $()_e$ a sequence of angles of length $0$ in $E$ at $e$, or a trivial sequence of $E$ at $e$.
\item[(2)] A sequence of the form $p=(g^{n-1}\cdot e,\cdots,g\cdot e,e)$ with $e\in E$ and $0\leq n\leq d(e)$ is called a standard sequence  (associated to the vertex $G\cdot e$) of $E$ (we define $p=()_e$ when $n=0$). For $n>0$, the source (resp. terminal) of standard sequence $p=(g^{n-1}\cdot e,\cdots,g\cdot e,e)$ is defined to be $e$ (resp. $g^{n}\cdot e$); both the source and the terminal of the trivial sequence $()_e$ are defined to be $e$. The composition of two standard sequences is defined in a natural way.
\item[(3)] A standard sequence of the form $(g^{d(e)-1}\cdot e,\cdots,g\cdot e,e)$ with $e\in E$ is called a full sequence of $E$.
\end{itemize}
\end{Def}

 For a standard sequence $p=(g^{n-1}\cdot e,\cdots,g\cdot e,e)$, we can define two associated standard sequences (which can be called the left complement and the right complement of $p$ respectively) $$\prescript{\wedge}{}{p}=\begin{cases}
(g^{d(e)-1}\cdot e,\cdots,g^{n+1}\cdot e,g^{n}\cdot e), \text{ if } 0<n<d(e); \\
()_{g^{d(e)}(e)}, \text{ if } n=d(e); \\
(g^{d(e)-1}\cdot e,\cdots,g\cdot e,e), \text{ if } n=0 \text{ and } p=()_e,
\end{cases}$$ and
$$p^{\wedge}=\begin{cases}
(g^{-1}\cdot e,g^{-2}\cdot e,\cdots,g^{n-d(e)}\cdot e), \text{ if } 0<n<d(e); \\
()_{e}, \text{ if } n=d(e); \\
(g^{-1}\cdot e,\cdots,g^{-d(e)}\cdot e), \text{ if } n=0 \text{ and } p=()_e.
\end{cases}$$
Note that for a standard sequence $p$ of $E$, both $\prescript{\wedge}{}{p}p$ and $pp^{\wedge}$ are full sequences of $E$.

For a sequence $p=(e_{n},\cdots,e_2,e_1)$ in $E$, we can associate a formal sequence as follows:
$$L(p)=\begin{cases}
L(e_n)\cdots L(e_2)L(e_1), \text{ if } p=(e_{n},\cdots,e_2,e_1)\text{ is a sequence of length}>0; \\
1_{P(e)}, \text{ if } p=()_e\text{ is a trivial sequence at }e.
\end{cases}$$
Moreover, for a set $\mathscr{X}$ of sequences, define $L(\mathscr{X})=\{L(p)\mid p\in \mathscr{X}\}$.

\begin{Def}\label{parallel standard sequence}
Let $E$ be an f-BC, $p$, $q$ be two sequences of angles in $E$. Denote $p\equiv q$ if the associated formal sequences $L(p)$ and $L(q)$ are equal. In this case we say that $p$, $q$ are identical.
\end{Def}

\begin{Rem1}\label{identical}
\begin{itemize}
\item[(1)] Any standard sequence which is identical to a trivial sequence $()_{e}$ is also a trivial sequence $()_{e'}$ such that $P(e)=P(e')$.

\item[(2)] By $(f6)$ we see that any standard sequence which is identical to a full sequence $p$ of the form $(g^{d(e)-1}\cdot e,\cdots,g\cdot e,e)$ is also a full sequence $q$ of the form $(g^{d(h)-1}\cdot h,\cdots,g\cdot h,h)$ with $P(e)=P(h)$.

\item[(3)] By $(f4)$ and $(f5)$, for standard sequences $p$, $q$, $\prescript{\wedge}{}{p}\equiv\prescript{\wedge}{}{q}$ if and only if $p^{\wedge}\equiv q^{\wedge}$.
\end{itemize}
\end{Rem1}

For a set $\mathscr{X}$ of standard sequences, denote $\prescript{\wedge}{}{\mathscr{X}}=\{\prescript{\wedge}{}{p}\mid p\in \mathscr{X}\}$ (resp. $\mathscr{X}^{\wedge}=\{p^{\wedge}\mid p\in \mathscr{X}\}$), and denote $[\mathscr{X}]=\{$ standard sequence $q\mid q$ is identical to some $p\in\mathscr{X}\}$.

\begin{Def}\label{f-BC-type-S}
An f-BC $E$ is said to be of type S (or $E$ is an $f_s$-BC in short) if it satisfies additionally the following condition:

$(f7)$ For standard sequences $p\equiv q$, $[[\prescript{\wedge}{}{p}]^{\wedge}]=[[\prescript{\wedge}{}{q}]^{\wedge}]$ or $[\prescript{\wedge}{}{[p^{\wedge}]}]=[\prescript{\wedge}{}{[q^{\wedge}]}]$.
\end{Def}

In particular, if $E$ is an $f_s$-BC such that each polygon of $E$ has exactly two elements, then we call $E$ a fractional Brauer graph of type S (abbr. $f_s$-BG).

\begin{Rem1}\label{two-conditions-are equivalent}
\begin{itemize}
\item[(1)] For a standard sequence $p$, we have $[\prescript{\wedge}{}{[p^{\wedge}]}]=[\prescript{\wedge}{}{[(\prescript{\wedge}{}{p})^{\wedge\wedge}]}]
=[\prescript{\wedge}{}{([\prescript{\wedge}{}{p}]^{\wedge\wedge})}]=[[\prescript{\wedge}{}{p}]^{\wedge}]$. It follows that the conditions $[[\prescript{\wedge}{}{p}]^{\wedge}]=[[\prescript{\wedge}{}{q}]^{\wedge}]$ and $[\prescript{\wedge}{}{[p^{\wedge}]}]=[\prescript{\wedge}{}{[q^{\wedge}]}]$ in Definition \ref{f-BC-type-S} are equivalent.
\item[(2)] For standard sequences $p\equiv q$, the equation $[[\prescript{\wedge}{}{p}]^{\wedge}]=[[\prescript{\wedge}{}{q}]^{\wedge}]$ is always true if $p$ is a trivial or a full sequence.
\end{itemize}
\end{Rem1}

In Example \ref{example0}, the partition $L$ is trivial. So for nontrivial standard sequences $p,q$ of $E$, $p\equiv q$ implies $p=q$. Then $(f7)$ holds and $E$ is an $f_s$-BC and indeed an $f_s$-BG.

In Example \ref{example1}, the only case for $p\equiv q$ and $p\neq q$ for nontrivial standard sequences $p$, $q$ is $p=(1)$ and $q=(1^{'})$. Since $[[\prescript{\wedge}{}{p}]^{\wedge}]=[[\prescript{\wedge}{}{q}]^{\wedge}]=\{(1),(1')\}$, $E$ is an $f_s$-BC and indeed an $f_s$-BG.

In Example \ref{example2}, let $p=(a_1)$ and $q=(a_2)$ be identical standard sequences in $E$, then $[[\prescript{\wedge}{}{p}]^{\wedge}]=\{(a_1),(a_2),(a_3,a_2,a_3)\}$ and $[[\prescript{\wedge}{}{q}]^{\wedge}]=\{(a_1),(a_2)\}$. Therefore $E$ is an f-BC but not an $f_s$-BC.

In Example \ref{example3}, let $p=(2,1)$ and $q=(2',1')$ be identical standard sequences in $E$, then $[[\prescript{\wedge}{}{p}]^{\wedge}]=\{(2,1),(2',1'),(3,1'')\}$ and $[[\prescript{\wedge}{}{q}]^{\wedge}]=\{(2,1),(2',1')\}$. Therefore $E$ is an f-BC but not an $f_s$-BC.

In Example \ref{example4}, the partition $L$ is trivial. Then $(f7)$ holds and $E$ is an $f_s$-BC.

\begin{Def}\label{f-BC-type-MS}
An f-BC $E=(E,P,L,d)$ is said to be of type MS (or $E$ is an $f_{ms}$-BC in short) if partition $L$ of $E$ is trivial. If $E$ is an f-BC of type MS such that $P(e)$ contains exactly two elements for each $e\in E$, then $E$ is said to be a fractional Brauer graph of type MS (abbr. $f_{ms}$-BG).
\end{Def}

Clearly every f-BC of type MS is of type S, and Example \ref{example0} and Example \ref{example4} are f-BCs of type MS.

We summarize our discussion by the following diagram:

$$\xymatrix@R=3px{
BC \ar@{=>}[r] & f_{ms}\mbox{-}BC \ar@{=>}[r] & f_{s}\mbox{-}BC  \ar@{=>}[r] & f\mbox{-}BC \\
&&& \\
BG \ar@{=>}[uu]\ar@{=>}[r] & f_{ms}\mbox{-}BG \ar@{=>}[uu]\ar@{=>}[r] & f_{s}\mbox{-}BG \ar@{=>}[uu] & \\
}$$

\section{Fractional Brauer configuration category}

In this section, we will associate every f-BC $E$ a $k$-category $\Lambda_E$ which we call a fractional Brauer configuration category. It turns out that $\Lambda_E$ is a locally bounded $k$-category in the sense of Bongartz and Gabriel.

\subsection{The quiver of a fractional Brauer configuration category}
\

\begin{Def}\label{f-BC-category}
For an f-BC $E=(E,P,L,d)$, we associate a quiver $Q_{E}$ as follows.
\
\begin{itemize}
    \item The set $(Q_{E})_{0}$ of vertices is given by $\{P(e)\mid e\in E\}$;
    \item The set $(Q_{E})_{1}$ of arrows is given by $\{L(e)\mid e\in E\}$, where the arrow $L(e)$ has the source $P(e)$ and the terminal $P(g\cdot e)$.
\end{itemize}
Denote $s: (Q_{E})_{1}\rightarrow (Q_{E})_{0}$ and $t: (Q_{E})_{1}\rightarrow (Q_{E})_{0}$ by two maps with $s$ sending each arrow to its source and $t$ sending each arrow to its terminal.
\end{Def}

\begin{Rem1}\label{interpretation-in-terms-of-f1-f6} We have the following interpretations of $Q_{E}$ in terms of the conditions of f-BC.
\begin{itemize}
    \item[(1)] $(f1)$ and $(f2)$ ensure that the definition of the source and the terminal of an arrow $L(e)$ of $Q_E$ are independent to the choice of the representative $e$ of the class $L(e)$.
    \item[(2)] Partition $L$ means that the arrows of $Q_{E}$ are obtained by identifying $L(e)$ with $L(e')$ in $Q_{E}$ if $e$ and $e'$ belong to the same partition class of $L$. Moreover, since $P(e)$ (hence also $L(e)$) is a finite set for each $e$, the quiver $Q_E$ is locally finite, that is, the number of arrows starting or stopping at any vertex is finite.
    \item[(3)] $(f4)$ (resp. $(f5)$) ensures that the Nakayama automorphism $\sigma$ of $E$ induces a permutation on $(Q_{E})_{0}$ (resp. $(Q_{E})_{1}$). $(f4)$ and $(f5)$ together with $(f3)$ ensure that $\sigma$ commutes with $s$ and $t$ so that it induces an automorphism of the quiver $Q_E$. We also denote this automorphism by $\sigma$.
    \item[(4)] $p=(e_{n},\cdots,e_2,e_1)$ is a sequence of $E$ if and only if $L(p)=L(e_n)\cdots L(e_2)L(e_1)$ is a path of $Q_{E}$, and we write a path of $Q_E$ from right to left. For a trivial sequence $p=()_e$, $L(p)=1_{P(e)}$ is the trivial path of $Q_E$ at vertex $P(e)$.

    \item[$(5)$] Every finite (resp. infinite) $\langle g \rangle$-orbit $G\cdot e$ corresponds to a cycle $L(g^{\mid G\cdot e\mid -1}\cdot e)\cdots L(g\cdot e)L(e)$ of length $\mid G\cdot e\mid$ (resp. an infinite path $\cdots L(g\cdot e)L(e)L(g^{-1}\cdot e)\cdots$) in the quiver $Q_E$.
\end{itemize}
\end{Rem1}

We usually denote a polygon in an f-BC $E$ by $P(i)=\{i,i',i'',\cdots\}$ and denote the corresponding vertex in $Q_E$ by $i$. In Example \ref{example0}, $Q_{E}$ is the following quiver

\begin{center}
\tikzset{every picture/.style={line width=0.75pt}}
\begin{tikzpicture}[x=22pt,y=18pt,yscale=1,xscale=1]
\node at(0,0) {$1$};
\node at(5,0) {$2$};\node at(5.5,0) {.};
\node at(2.5,4.33) {$3$};
\node at(2.5,0.6) {$L(1)$};
\node at(2.5,-0.6) {$L(1')$};
\node at(3.2,1.9) {$L(2)$};
\node at(4.5,2.8) {$L(2')$};
\node at(1.8,1.9) {$L(3)$};
\node at(0.5,2.7) {$L(3')$};
\draw[->]    (0.5,0.2)--(4.5,0.2) ;
\draw[->]    (0.5,-0.2)--(4.5,-0.2) ;
\draw[->]    (4.7,0.3)--(2.7,4) ;
\draw[->]    (4.9,0.6)--(2.9,4.3) ;
\draw[->]    (2.3,4)--(0.3,0.3) ;
\draw[->]    (2.1,4.3)--(0.1,0.6) ;
\end{tikzpicture}
\end{center}

In Example \ref{example1}, $Q_{E}$ is the following quiver

$$
\vcenter{
\xymatrix {
	&&&&1\ar[dd]^{L(1)} \\
    4\ar[urrrr]|{L(4')}\ar@(dl,ul)^{L(4)}&&3\ar[urr]|{L(3)}\ar@(dl,ul)^{L(3')}&& \\
    &&&&2\ar[ullll]|{L(2')}\ar[ull]|{L(2)}
}}.$$ \\
\medskip

In Example \ref{example2}, $Q_{E}$ is the following quiver

$$
\vcenter{
\xymatrix {
	a\ar@(dl,ul)^{L(a_1)}\ar@(ur,dr)^{L(a_3)}
}}.$$ \\
\medskip

In Example \ref{example3}, $Q_{E}$ is the following quiver

$$
\vcenter{
\xymatrix {
	&&&&3\ar[lddd]^{L(3)} \\
    &1\ar[rr]_{L(1)}\ar[urrr]^{L(1'')}&&2\ar[dd]_{L(2)}& \\
    &&&& \\
    &5\ar[uu]_{L(5)}&&4\ar[ll]_{L(4)}\ar[llld]^{L(4')}& \\
    6\ar[uuur]^{L(6)}&&&&
}}.$$
\medskip

In Example \ref{example4}, $Q_{E}$ is the following quiver

\begin{center}
\tikzset{every picture/.style={line width=0.75pt}}
\begin{tikzpicture}[x=50pt,y=30pt,yscale=1,xscale=1]
\node at(0,0) {$0$};
\node at(1,1) {$1$};
\node at(1,-1) {$2$};
\node at(2,0) {$3$};
\node at(3,1) {$5$};
\node at(3,-1) {$4$};
\node at(4,0) {$6$};
\node at(5,1) {$7$};
\node at(5,-1) {$8$};
\node at(0.33,0.7) {$L(0)$};
\node at(0.33,-0.7) {$L(0')$};
\node at(1.67,0.7) {$L(1)$};
\node at(1.67,-0.7) {$L(2)$};
\node at(2.33,0.7) {$L(3')$};
\node at(2.33,-0.7) {$L(3)$};
\node at(3.67,0.7) {$L(5)$};
\node at(3.67,-0.7) {$L(4)$};
\node at(4.33,0.7) {$L(6)$};
\node at(4.33,-0.7) {$L(6')$};
\fill (-0.4,0) circle (0.1ex);
\fill (-0.5,0) circle (0.1ex);
\fill (-0.6,0) circle (0.1ex);
\fill (5.4,0) circle (0.1ex);
\fill (5.5,0) circle (0.1ex);
\fill (5.6,0) circle (0.1ex);
\draw[->]    (0.2,0.2)--(0.8,0.8) ;
\draw[->]    (0.2,-0.2)--(0.8,-0.8) ;
\draw[->]    (1.2,0.8)--(1.8,0.2) ;
\draw[->]    (1.2,-0.8)--(1.8,-0.2) ;
\draw[->]    (2.2,0.2)--(2.8,0.8) ;
\draw[->]    (2.2,-0.2)--(2.8,-0.8) ;
\draw[->]    (3.2,0.8)--(3.8,0.2) ;
\draw[->]    (3.2,-0.8)--(3.8,-0.2) ;
\draw[->]    (4.2,0.2)--(4.8,0.8) ;
\draw[->]    (4.2,-0.2)--(4.8,-0.8) ;
\end{tikzpicture}.
\end{center}

\begin{Lem}\label{interpretation-of-(fR1)}
Let $E=(E,P,L,d)$ be an f-BC. For a path $p=L(e_n)\cdots L(e_2)L(e_1)$ of $Q_E$, the following conditions are equivalent:
\begin{itemize}
    \item [(1)] There exists some $e\in E$ such that $p=L(g^{n-1}\cdot e)\cdots L(g\cdot e)L(e)$.
    \item [(2)] $\bigcap_{i=1}^n g^{n-i}\cdot L(e_i)\neq\emptyset$.
\end{itemize}
\end{Lem}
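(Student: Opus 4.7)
The plan is to prove both implications directly by chasing the $G$-action through the $L$-classes; no use of conditions $(f3)$–$(f7)$ is required, only the fact that $L$ is a partition of $E$.

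For $(1) \Rightarrow (2)$, I assume there exists $e \in E$ with $L(e_i) = L(g^{i-1}\cdot e)$ for every $1 \le i \le n$. Then $g^{i-1}\cdot e \in L(e_i)$, so
\[
g^{n-1}\cdot e \;=\; g^{n-i}\cdot\bigl(g^{i-1}\cdot e\bigr) \;\in\; g^{n-i}\cdot L(e_i)
\]
for each $i$. Hence $g^{n-1}\cdot e \in \bigcap_{i=1}^{n} g^{n-i}\cdot L(e_i)$, which is therefore nonempty.

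For $(2) \Rightarrow (1)$, I pick any element $f$ in the intersection. By definition, for each $i$ there is some $f_i \in L(e_i)$ with $f = g^{n-i}\cdot f_i$, equivalently $f_i = g^{i-n}\cdot f$. Setting $e := g^{1-n}\cdot f$ I get $f_i = g^{i-1}\cdot e$ for every $i$. Since $f_i \in L(e_i)$ and $L$ is a partition, $L(e_i) = L(f_i) = L(g^{i-1}\cdot e)$, yielding
\[
p \;=\; L(e_n)\cdots L(e_2)L(e_1) \;=\; L(g^{n-1}\cdot e)\cdots L(g\cdot e)L(e),
\]
which is $(1)$.

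There is no real obstacle: the statement is essentially a translation between the data of $p$ being a ``$G$-orbit path'' and the set-theoretic intersection condition, and both directions follow by a one-line manipulation of $G$-translates of $L$-classes. The fact that $p$ is assumed to be a legitimate path of $Q_E$ (so $P(g\cdot e_i) = P(e_{i+1})$) is not needed in either direction: once (2) holds the relation $L(e_i) = L(g^{i-1}\cdot e)$ forces compatibility automatically via $(f1)$ and $(f2)$, and in the reverse direction it is built into (1).
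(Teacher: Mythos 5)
Your proof is correct and follows essentially the same route as the paper's: for $(1)\Rightarrow(2)$ you exhibit $g^{n-1}\cdot e$ as a common element, and for $(2)\Rightarrow(1)$ you translate a chosen element of the intersection back by $g^{1-n}$ and use that $L$ is a partition to conclude $L(e_i)=L(g^{i-1}\cdot e)$, exactly as the paper does. The closing remark that the path condition $P(g\cdot e_i)=P(e_{i+1})$ is not actually used is accurate and consistent with the paper's argument.
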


\begin{proof}
If there exists some $e\in E$ such that $p=L(g^{n-1}\cdot e)\cdots L(g\cdot e)L(e)$, then $g^{n-i}\cdot L(e_i)=g^{n-i}\cdot L(g^{i-1}\cdot e)$ for each $1\leq i\leq n$ and $g^{n-1}\cdot e\in\bigcap_{i=1}^n g^{n-i}\cdot L(e_i)$. Therefore $\bigcap_{i=1}^n g^{n-i}\cdot L(e_i)\neq\emptyset$. Conversely, if $\bigcap_{i=1}^n g^{n-i}\cdot L(e_i)\neq\emptyset$, let $h\in\bigcap_{i=1}^n g^{n-i}\cdot L(e_i)$ and let $e=g^{1-n}\cdot h$. Since $e\in g^{1-i}\cdot L(e_i)$ for each $1\leq i\leq n$, $L(g^{i-1}\cdot e)=L(e_i)$ for each $1\leq i\leq n$. Therefore $p=L(g^{n-1}\cdot e)\cdots L(g\cdot e)L(e)$.
\end{proof}

The above quiver $Q_E$ defines a path category $kQ_E$ whose objects are the vertices of $Q_E$ and whose morphisms are generated by the paths of $Q_E$.

\subsection{The relations of a fractional Brauer configuration category}
\

\begin{Def} \label{ideal-I_E}
For an f-BC $E=(E,P,L,d)$, we define an ideal $I_{E}$ of the path category $kQ_E$ which is generated by the following three types of relations:
\begin{itemize}
\item[$(fR1)$] $L(g^{d(e)-1-k}\cdot e)\cdots L(g\cdot e)L(e)-L(g^{d(h)-1-k}\cdot h)\cdots L(g\cdot h)L(h)$, where $k\geq 0$, $P(e)=P(h)$ and $L(g^{d(e)-i}\cdot e)=L(g^{d(h)-i}\cdot h)$ for $1\leq i\leq k$.
\item[$(fR2)$] Paths of the form $L(e_n)\cdots L(e_2)L(e_1)$ with $\bigcap_{i=1}^n g^{n-i}\cdot L(e_i)=\emptyset$ for $n>1$.
\item[$(fR3)$] Paths of the form $L(g^{n-1}\cdot e)\cdots L(g\cdot e)L(e)$ for $n>d(e)$.
\end{itemize}
We call the quotient category $\Lambda_{E}=kQ_{E}/I_{E}$ the fractional Brauer configuration category (abbr. f-BCC) of $E$. Moreover, if $E$ is an $f_s$-BC ( resp. $f_{s}$-BG, $f_{ms}$-BC, $f_{ms}$-BG), then we call $\Lambda_E$ an $f_s$-BCC ( resp. $f_{s}$-BGC, $f_{ms}$-BCC, $f_{ms}$-BGC).
\end{Def}

\begin{Rem1}\label{interpretations-to-the-ideal-$I_E$} We have the following interpretations to the ideal $I_E$:
\begin{itemize}
    \item $(fR1)$ means that $L(p)-L(q)$ is contained in $I_E$ if $p,q$ are standard sequences of $E$ with $\prescript{\wedge}{}{p}\equiv \prescript{\wedge}{}{q}$. Note that $(fR1)$ is equal to $(R1)$ in BCA case, but if the partition $L$ is nontrivial, then $(fR1)$ contains more elements in general.
    \item $(fR2)$ means that each path of $Q_E$ not of the form $L(g^{n-1}\cdot e)\cdots L(g\cdot e)L(e)$ is contained in $I_E$ (see Lemma \ref{interpretation-of-(fR1)}). Unfortunately, Conditions $(f1)-(f6)$ do not ensure that all full sequences define nonzero paths in $kQ_E/I_E$ (see Examples \ref{example2}, \ref{example3}), for this to be true, we need the condition $(f7)$ (see Remark \ref{full-squences-are-nonzero}). Note that $(fR2)$ is equal to $(R2)$ in BCA case, but if the partition $L$ is nontrivial, then $(fR2)$ contains more elements in general.
    \item $(fR3)$ defines the maximal possible paths in $kQ_E$ which are not contained in $I_E$. Together with $(f3)$ in Definition \ref{f-BC} it is not hard to see that, for each vertex $x=P(e)\in (Q_E)_0$, there is a natural number $N_x=(\mathrm{max}\{d(h)\mid h\in P(e)\}+1)$ such that $I_E$ contains each path of length $\geq N_x$ which starts or stops at $x$. Note that when the partition $L$ is trivial and there is no $1$-gon, then $(fR3)$ can be removed, and $(fR2)$ can be simplified to the case $n=2$.
    \item Condition $(f6)$ in Definition \ref{f-BC} ensures that $I_E$ is contained in the ideal of $kQ_E$ generated by arrows. Otherwise, there exists a relation $r=L(g^{d(e)-1-k}\cdot e)\cdots L(g\cdot e)L(e)-L(g^{d(h)-1-k}\cdot h)\cdots L(g\cdot h)L(h)$ of type $(fR1)$ such that the length of one of the paths $L(g^{d(e)-1-k}\cdot e)\cdots L(g\cdot e)L(e)$ and $L(g^{d(h)-1-k}\cdot h)\cdots L(g\cdot h)L(h)$ is $0$. If the length of $L(g^{d(e)-1-k}\cdot e)\cdots L(g\cdot e)L(e)$ is $0$, then $k=d(e)$. Since $r$ is nonzero, $k<d(h)$. So $L(g^{d(e)-1}\cdot e)\cdots L(g\cdot e)L(e)$ is a proper subsequence of $L(g^{d(h)-1}\cdot h)\cdots L(g\cdot h)L(h)$, which contradicts the condition $(f6)$.
\end{itemize}

\end{Rem1}

\begin{Rem1}\label{two-quivers-with-relations-are-identical}
For a BG $\Gamma$, if we consider $\Gamma$ as an finite $f_{ms}$-BC $E$ with integral f-degree, then the quiver with relations $(Q_E,I_E)$ is just the quiver with relations $(Q_{\Gamma},I_{\Gamma})$ defined in Section 2.
\end{Rem1}

In Example \ref{example0}, $I_{E}$ is generated by the following relations: $$ L(2)L(1)-L(2')L(1'), L(3)L(2)-L(3')L(2'), L(1)L(3)-L(1')L(3'), $$
$$ L(2')L(1), L(2)L(1'), L(3')L(2), L(3)L(2'), L(1')L(3), L(1)L(3').$$

\medskip
In Example \ref{example1}, $I_{E}$ is generated by the following relations: $$L(3')-L(2)L(1)L(3), L(4)-L(2')L(1)L(4'), L(3)L(2)-L(4')L(2'), L(3')L(2),$$
$$L(3)L(3'), L(4)L(2'), L(4')L(4), L(2)L(1)L(4'), L(2')L(1)L(3), L(1)L(3)L(2)L(1).$$

\medskip

In Example \ref{example2}, let $p=(a_1)$, $q=(a_3,a_2,a_3)$ be standard sequences, then $\prescript{\wedge}{}{p}\equiv\prescript{\wedge}{}{q}$. Therefore $L(a_1)=L(a_3)L(a_1)L(a_3)$ in $\Lambda_{E}$. Similarly, $L(a_3)=L(a_1)L(a_3)L(a_1)$ in $\Lambda_{E}$. We have
\begin{equation} L(a_1)=L(a_3)L(a_1)L(a_3)=L(a_3)L(a_3)L(a_1)L(a_3)L(a_3)
\end{equation}
in $\Lambda_{E}$. Since $$L(a_3)\cap g\cdot L(a_3)\cap g^{2}\cdot L(a_1)=\{a_3,a_4\}\cap\{a_2,a_4\}\cap\{a_1,a_2\}=\emptyset,$$
$L(a_3)L(a_3)L(a_1)=0$ in $\Lambda_{E}$. Therefore $L(a_1)=0$ in $\Lambda_{E}$. Similarly, $L(a_3)=0$ in $\Lambda_{E}$. So $I_{E}$ is generated by $L(a_1)$, $L(a_3)$ and $\Lambda_{E}\cong k$.
\medskip

In Example \ref{example3}, $I_{E}$ is generated by the following relations: $$L(5)L(4)-L(6)L(4'), L(2)L(1)-L(3)L(1''), L(4')L(3), L(1'')L(6).$$

In Example \ref{example4}, $I_{E}$ is generated by the following relations: $$L(3i+1)L(3i)-L(3i+2)L(3i'), L(3i')L(3i-2), L(3i)L(3i-1),$$ where $i\in\mathbb{Z}$.

\medskip
Recall from \cite[Definition 2.1]{BG} that a locally bounded category is a $k$-category $\Lambda$ satisfying the following three conditions:
\begin{itemize}
    \item For each $x\in \Lambda$, the endomorphism algebra $\Lambda(x,x)$ is local.
    \item Distinct objects of $\Lambda$ are not isomorphic.
    \item For each $x\in \Lambda$, $\sum_{y\in \Lambda}\mathrm{dim}_{k}\Lambda(x,y)< \infty$ and $\sum_{y\in \Lambda}\mathrm{dim}_{k}\Lambda(y,x)< \infty$.
\end{itemize}

For the definition of the radical of a locally bounded category, we refer to \cite[Section 2]{BG}.

\begin{Thm}\label{f-BC algebra is locally bounded}
Let $E=(E,P,L,d)$ be an f-BC, and let $\Lambda_{E}=kQ_{E}/I_{E}$ be the associated f-BCC. Then we have the following.
\begin{itemize}
    \item[(1)] Let $J$ be the ideal of $\Lambda_{E}$ generated by the arrows of $Q_E$. Then $J$ is the radical of $\Lambda_{E}$.
    \item[(2)] $\Lambda_{E}$ is locally bounded $k$-category.
    \item[(3)] The Nakayama automorphism $\sigma$ of $E$ induces an automorphism of the category $\Lambda_E$ which is also denoted by $\sigma$.
\end{itemize}
\end{Thm}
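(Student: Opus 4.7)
The plan is to extract a concrete basis for $\Lambda_E(x,y)$ in terms of residues of standard sequences, and then verify the three defining conditions of a locally bounded $k$-category directly.

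First, for part (1), I would combine Lemma \ref{interpretation-of-(fR1)} with relations $(fR2)$ and $(fR3)$ to see that every nonzero morphism of $\Lambda_E$ is a $k$-linear combination of residues of paths of the form $L(g^{n-1}\cdot e)\cdots L(g\cdot e)L(e)$ with $0\le n\le d(e)$ for some $e\in E$ (the case $n=0$ being a trivial path). Fix $x=P(e_0)$ and note that $P(e_0)=\{h_1,\ldots,h_m\}$ is finite by $(f1)$; any nonzero path of length $\geq 1$ starting at $x$ must proceed along $G\cdot h_i$ for some $i$ and has length at most $d(h_i)$. Since $Q_E$ is locally finite (Remark \ref{interpretation-in-terms-of-f1-f6}(2)) and paths from $x$ have length bounded by $N_x=\max_i d(h_i)$, the number of such paths is finite, giving $\sum_{y}\dim_k\Lambda_E(x,y)<\infty$; the argument for paths ending at $x$ is symmetric using $P(\sigma^{-1}(\cdot))$.

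For the local property of $\Lambda_E(x,x)$, observe that by $(f6)$ (see Remark \ref{interpretations-to-the-ideal-$I_E$}) no relation of type $(fR1)$ identifies a positive-length path with a scalar multiple of $1_x$; hence $J\cap\Lambda_E(x,x)$ is a proper ideal complemented by $k\cdot 1_x$, and any element of $J\cap\Lambda_E(x,x)$ is a sum of residues of cycles at $x$ of length $\geq 1$, whose $(N_x+1)$-st powers lie in the span of paths of length $>N_x$ and therefore vanish by $(fR3)$. Thus $J\cap\Lambda_E(x,x)$ is nilpotent and $\Lambda_E(x,x)=k\cdot 1_x\oplus(J\cap\Lambda_E(x,x))$ is local with residue field $k$. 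For distinct $x,y$, every $f\in\Lambda_E(x,y)$ lies in $J$; if $gf=1_x$ for some $g\in\Lambda_E(y,x)$, then $gf$ would be a unit in the nilpotent ideal $J\cap\Lambda_E(x,x)$, a contradiction. This proves (1). For (2), the same reasoning shows $\Lambda_E/J$ is isomorphic to the semisimple $k$-category $\prod_{x\in(Q_E)_0}k$, while $J$ is contained in the radical because each $J\cap\Lambda_E(x,x)$ is nilpotent and $J(x,y)=\Lambda_E(x,y)$ for $x\neq y$; hence $J=\mathrm{rad}\,\Lambda_E$.

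For (3), by Remark \ref{interpretation-in-terms-of-f1-f6}(3) the Nakayama automorphism $\sigma\colon E\to E$, $e\mapsto g^{d(e)}\cdot e$, already induces an automorphism of the quiver $Q_E$; I would verify that the induced map on $kQ_E$ preserves each of the three generating families of $I_E$. Using that $\sigma$ commutes with $g$ (because $d$ is $G$-invariant by $(f3)$) and permutes both the $P$-classes and the $L$-classes bijectively (by $(f4)$ and $(f5)$), relation $(fR1)$ is mapped to another relation of type $(fR1)$ (with $e,h$ replaced by $\sigma e,\sigma h$); relation $(fR2)$ is preserved because $\bigcap_i g^{n-i}\cdot L(\sigma e_i)=\sigma(\bigcap_i g^{n-i}\cdot L(e_i))=\emptyset$; and relation $(fR3)$ is preserved because $d(\sigma e)=d(e)$. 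Hence $\sigma(I_E)=I_E$ and $\sigma$ descends to an automorphism of $\Lambda_E$.

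The main obstacle I anticipate is purely bookkeeping: one must resist taking $I_E$ to be admissible and instead argue carefully that $(f6)$ alone prevents any type $(fR1)$ relation from collapsing an arrow into a scalar, so that $J$ really equals the arrow-generated ideal and the decomposition $\Lambda_E(x,x)=k\cdot 1_x\oplus(J\cap\Lambda_E(x,x))$ is genuine. Once this is in place, everything else reduces to the length bound supplied by $(fR3)$ together with the local finiteness of $Q_E$.
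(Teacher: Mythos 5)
Your proposal is correct and follows essentially the same route as the paper: finite-dimensionality of the Hom-spaces comes from the finiteness of each polygon together with the length bound $d(h)$ imposed by $(fR3)$, locality and the identification $J=\mathrm{rad}\,\Lambda_E$ come from the nilpotence of $J(x,x)$ (your packaging via $\Lambda_E(x,x)=k\cdot 1_x\oplus J(x,x)$ is equivalent to the paper's ``$\mathrm{id}_x-f'f$ is invertible'' argument), and part (3) is the quiver automorphism of Remark \ref{interpretation-in-terms-of-f1-f6}(3) descending to $\Lambda_E$. Your explicit verification that $\sigma$ preserves each family of relations $(fR1)$--$(fR3)$ fills in a step the paper dismisses as straightforward, and your appeal to $(f6)$ to ensure $I_E$ lies in the arrow ideal is exactly the point recorded in Remark \ref{interpretations-to-the-ideal-$I_E$}.
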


\begin{proof}
For $e\in E$, each nonzero path in $\Lambda_{E}$ starting at $P(e)$ is of the form $L(g^{n-1}\cdot h)\cdots L(g\cdot h)L(h)$, where $h\in P(e)$ and $0\leq n\leq d(h)$. Since $\bigoplus_{a\in (Q_{E})_{0}}^{}\Lambda_{E}(P(e),a)$ is generated by such paths as a $k$-space, and since $P(e)$ is finite, $\mathrm{dim}_{k}(\bigoplus_{a\in (Q_{E})_{0}}^{}\Lambda_{E}(P(e),a))<\infty$. Similarly, $\mathrm{dim}_{k}(\bigoplus_{a\in (Q_{E})_{0}}^{}\Lambda_{E}(a,P(e)))<\infty$.

For each morphism $f:x\rightarrow y$ in $J$, to show that $f$ is in the radical of $\Lambda_{E}$, we need to show that ${id}_{x}-f'f$ is invertible for all $f'\in \Lambda_{E}(y,x)$. Let $x=P(e)$. Then each path starting at $x$ of length larger than $\mathrm{max}\{d(h)\mid h\in P(e)\}$ is zero in $\Lambda_{E}$. Therefore $f'f$ is nilpotent and ${id}_{x}-f'f$ is invertible. Conversely, for each morphism $f:x\rightarrow y$ with $f\notin J$, there exists $\lambda\in k^{*}$ such that $f=\lambda{id}_{x}+f'$ with $f'\in J$. Since $f'$ is nilpotent, $f$ is invertible. Therefore $f$ is not in the radical of $\Lambda_{E}$. This proves (1).

For each object $x$ in $\Lambda_{E}$, $\Lambda(x,x)$ is a finite dimensional algebra. Denotes $J'$ the ideal in $kQ_E$ generated by arrows, then $(f6)$ ensures that $I_E$ is contained in $J'$ (see Remark \ref{interpretations-to-the-ideal-$I_E$}), and we have $J=J'/I_E$. Then $\Lambda(x,x)/(\mathrm{rad}\Lambda(x,x))=\Lambda(x,x)/J(x,x)\cong kQ(x,x)/J'(x,x)\cong k$. Therefore $\Lambda(x,x)$ is local. For different objects $x$, $y$ in $\Lambda_{E}$, $\Lambda(x,y)=J(x,y)$ is contained in the radical of $\Lambda_{E}$, thus each morphism $f:x\rightarrow y$ is not an isomorphism, so $x\ncong y$. Therefore $\Lambda_{E}$ is locally bounded. This proves (2).

By Remark \ref{interpretation-in-terms-of-f1-f6} (3), $\sigma$ induces an automorphism of the quiver $Q_E$. Moreover, it is straightforward to show that the automorphism $\sigma$ of $Q_E$ also induces an automorphism of the category $\Lambda_E$. This proves (3).
\end{proof}

\subsection{The category $\Lambda_E$ in type S is a locally bounded Frobenius category}
\

\begin{Def}\label{Frobenius category}
We call a locally bounded category $\Lambda$ to be Frobenius if for every object $x$ of $\Lambda$, there exists some objects $y,z$ of $\Lambda$ such that $\Lambda(-,x)\cong D\Lambda(y,-)$ and $\Lambda(x,-)\cong D\Lambda(-,z)$, where $D$ denotes the usual $k$-duality on vector spaces.
\end{Def}

Note that such objects $y,z$ in the definition above are also unique by Yoneda's lemma. Denote by mod$\Lambda$ the category of finitely generated $\Lambda$-modules (here, a finitely generated $\Lambda$-module means it is isomorphic to a quotient of a finite direct sum of representable contravariant functors from $\Lambda$ to mod$k$). According to \cite[Section 2]{BG}, the indecomposable projective modules in mod$\Lambda$ are isomorphic to $\Lambda(-,x)$ ($x\in \Lambda$) and the indecomposable injective modules in mod$\Lambda$ are isomorphic to $D\Lambda(y,-)$ ($y\in \Lambda$). Therefore if $\Lambda$ is a locally bounded Frobenius category, then the category mod$\Lambda$ is a Frobenius category in the sense of Happel \cite{H}. This demonstrates our terminology of locally bounded Frobenius category.

\begin{Def}\label{relation R}
Let $E=(E,P,L,d)$ be an f-BC and let $$\mathscr{E}=\{L(p)\mid p\mbox{ is a standard sequence of }E\},$$ which is a set of paths of $Q_E$. Define a relation $R$ on $\mathscr{E}$ as follows: for $u$, $v\in\mathscr{E}$, $uRv$ if and only if there exist some standard sequences $p$, $q$ of $E$ such that $u=L(p)$, $v=L(q)$ and $\prescript{\wedge}{}{p}\equiv \prescript{\wedge}{}{q}$.
\end{Def}

\begin{Rem1}
\begin{itemize}
\item[(1)] If $u$, $v\in\mathscr{E}$ and $uRv$, then $u,v$ have the same source and the same terminal.
\item[(2)] For $u$, $v\in\mathscr{E}$ with $uRv$, if $u=L(p)$ such that $p$ is a trivial sequence $()_e$, then $v=L(q)$ for some trivial sequence $q=()_{e'}$ such that $P(e)=P(e')$, and therefore $u=v$.
\item[(3)] For paths $u,v$ of $\mathscr{E}$ of length $\geq 1$, $uRv$ if and only if $u-v$ is a relation of $I_E$ of type $(fR1)$.
\end{itemize}
\end{Rem1}

\begin{Lem}\label{independence}
Let $E=(E,P,L,d)$ be an f-BC and let $$L(g^{n-1}\cdot e)\cdots L(g\cdot e)L(e)=L(g^{n-1}\cdot h)\cdots L(g\cdot h)L(h)$$ be a path of $Q_{E}$. Then $n>d(e)$ (resp. $n<d(e)$) if and only if $n>d(h)$ (resp. $n<d(h)$).
\end{Lem}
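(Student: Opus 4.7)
Plan. The equality of paths in $Q_E$ forces the arrow-by-arrow equality $L(g^{i}\cdot e)=L(g^{i}\cdot h)$ in $(Q_E)_1$ for all $0\le i\le n-1$, since the arrows of $Q_E$ are precisely the $L$-classes. My strategy is to reduce both stated equivalences to a single assertion: whenever $d(e)\ne d(h)$, the length $n$ is strictly smaller than both $d(e)$ and $d(h)$. Once this is established, the signs of $n-d(e)$ and $n-d(h)$ coincide in every case, which is exactly what the lemma asserts.

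Without loss of generality assume $d(e)\le d(h)$; the case of equality is immediate, so I focus on $d(e)<d(h)$. The core claim is that $n<d(e)$. Suppose for contradiction that $n\ge d(e)$. Then the matched equalities $L(g^{i}\cdot e)=L(g^{i}\cdot h)$ hold in particular for $0\le i\le d(e)-1$. I introduce the shifted representative $h':=g^{d(e)-d(h)}\cdot h$, which lies in the same $G$-orbit as $h$ and therefore satisfies $d(h')=d(h)$ by $(f3)$. Using the identity $g^{d(h')-j}\cdot h' = g^{d(e)-j}\cdot h$, a direct re-indexing gives $L(g^{d(e)-j}\cdot e)=L(g^{d(h')-j}\cdot h')$ for every $1\le j\le d(e)$. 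Consequently the full sequence $L(g^{d(e)-1}\cdot e)\cdots L(g\cdot e)L(e)$ of $e$ coincides entry-for-entry with the leftmost $d(e)$ entries of the full sequence $L(g^{d(h')-1}\cdot h')\cdots L(g\cdot h')L(h')$ of $h'$. Since $d(h')=d(h)>d(e)$, this exhibits the full sequence of $e$ as a proper subsequence of the full sequence of $h'$ in the sense used in $(f6)$, contradicting that axiom.

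Therefore $n<d(e)<d(h)$, so both $n<d(e)$ and $n<d(h)$ hold while neither $n>d(e)$ nor $n>d(h)$ holds. Combined with the symmetric case $d(h)<d(e)$ and the trivial case $d(e)=d(h)$, both equivalences in the statement follow. The only delicate point is the introduction of the shift $h'$: the path equality alone matches $L(g^{i}\cdot e)$ with $L(g^{i}\cdot h)$ only at the same index $i$, whereas $(f6)$ compares two full sequences aligned at their leftmost entries. Translating by $g^{d(e)-d(h)}$ performs the required realignment, converting the index-matched information from the path equality into the proper-prefix configuration forbidden by $(f6)$, and this is the main (though modest) technical step I anticipate.
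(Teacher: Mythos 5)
Your proof is correct and follows essentially the same route as the paper's: both derive a contradiction with condition $(f6)$ by exhibiting the full sequence of the lower-degree angle as a proper subsequence of a full sequence attached to the other angle. The only difference is your shift to $h'=g^{d(e)-d(h)}\cdot h$ to realign the two full sequences at their leftmost entries; the paper simply notes that the path equality already identifies $L(g^{d(e)-1}\cdot e)\cdots L(e)$ with the rightmost $d(e)$ entries of $L(g^{d(h)-1}\cdot h)\cdots L(h)$, so under its reading of ``proper subsequence'' no realignment is needed.
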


\begin{proof}
Suppose $n>d(e)$ and $n\leq d(h)$. Then $L(g^{d(e)-1}\cdot e)\cdots L(g\cdot e)L(e)$ is a proper subsequence of $L(g^{d(h)-1}\cdot h)\cdots L(g\cdot h)L(h)$, which contradicts to $(f6)$. Therefore $n>d(e)$ if and only if $n>d(h)$. Similarly $n<d(e)$ if and only if $n<d(h)$.
\end{proof}

\begin{Rem1}\label{disjoint-union}
Let $\mathscr{B}_1$ (resp. $\mathscr{B}_2$) be the set of paths of $Q_E$ which are relations of type $(fR2)$ (resp. of type $(fR3)$) in $I_E$. By Lemma \ref{independence}, we know that the set of paths in $Q_E$ is a disjoint union of $\mathscr{B}_1$, $\mathscr{B}_2$ and $\mathscr{E}$.
\end{Rem1}

\begin{Lem}\label{the-condition-equivalent-to-(f7)}
For an f-BC $E=(E,P,L,d)$, $(f7)$ is equivalent to the following condition:

\noindent $(f7')$ For any standard sequence $p$ of $E$, the set of paths $v\in \mathscr{E}$ with $vRL(p)$ is $\{L(p')\mid\prescript{\wedge}{}{p}\equiv \prescript{\wedge}{}{p'}\}$.
\end{Lem}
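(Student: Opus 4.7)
The plan is to rewrite the right-hand side of $(f7')$ so that it matches the set $[[\prescript{\wedge}{}{p}]^{\wedge}]$ appearing in $(f7)$. The key observation is that the two complement operations are mutually inverse on standard sequences: $(\prescript{\wedge}{}{s})^{\wedge}=s$ and $\prescript{\wedge}{}{(s^{\wedge})}=s$ for every standard sequence $s$, which is a direct unwinding of the defining formulas for $\prescript{\wedge}{}{(\cdot)}$ and $(\cdot)^{\wedge}$. Using the bijection $p'\leftrightarrow\prescript{\wedge}{}{p'}$ this provides, together with the fact that $L$ is constant on $\equiv$-classes, one obtains
\[
\{L(p')\mid\prescript{\wedge}{}{p'}\equiv\prescript{\wedge}{}{p}\}=\{L(s^{\wedge})\mid s\equiv\prescript{\wedge}{}{p}\}=L([\prescript{\wedge}{}{p}]^{\wedge})=L([[\prescript{\wedge}{}{p}]^{\wedge}]).
\]
Moreover, for any set $X$ of standard sequences one has $[X]=\{r\text{ standard}\mid L(r)\in L(X)\}$, so two such sets $[X_1]$ and $[X_2]$ are equal if and only if $L(X_1)=L(X_2)$; in particular $[[\prescript{\wedge}{}{p}]^{\wedge}]=[[\prescript{\wedge}{}{q}]^{\wedge}]$ is equivalent to $L([[\prescript{\wedge}{}{p}]^{\wedge}])=L([[\prescript{\wedge}{}{q}]^{\wedge}])$.

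For $(f7')\Rightarrow(f7)$: suppose $p\equiv q$ are standard. Then $L(p)=L(q)$, hence $\{v\mid vRL(p)\}=\{v\mid vRL(q)\}$; applying $(f7')$ to both $p$ and $q$ identifies these sets with $L([[\prescript{\wedge}{}{p}]^{\wedge}])$ and $L([[\prescript{\wedge}{}{q}]^{\wedge}])$ respectively, so these $L$-images coincide, and by the remark above $[[\prescript{\wedge}{}{p}]^{\wedge}]=[[\prescript{\wedge}{}{q}]^{\wedge}]$.

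For $(f7)\Rightarrow(f7')$: the inclusion $\{L(p')\mid\prescript{\wedge}{}{p'}\equiv\prescript{\wedge}{}{p}\}\subseteq\{v\mid vRL(p)\}$ is immediate from the definition of $R$ (take the pair $(p',p)$). For the reverse, let $vRL(p)$, so there exist standard sequences $p_1,p_3$ with $v=L(p_1)$, $p_3\equiv p$, and $\prescript{\wedge}{}{p_1}\equiv\prescript{\wedge}{}{p_3}$. Setting $s=\prescript{\wedge}{}{p_1}$, we have $s\equiv\prescript{\wedge}{}{p_3}$ and $s^{\wedge}=p_1$, so $p_1\in[[\prescript{\wedge}{}{p_3}]^{\wedge}]$. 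Applying $(f7)$ to $p_3\equiv p$ yields $p_1\in[[\prescript{\wedge}{}{p}]^{\wedge}]$, so there exists $s'\equiv\prescript{\wedge}{}{p}$ with $p_1\equiv(s')^{\wedge}$. Then $p':=(s')^{\wedge}$ satisfies $L(p')=L(p_1)=v$ and $\prescript{\wedge}{}{p'}=s'\equiv\prescript{\wedge}{}{p}$, placing $v$ in the desired set.

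The main conceptual step, and essentially the only non-formal ingredient, is the involutory relation between $\prescript{\wedge}{}{(\cdot)}$ and $(\cdot)^{\wedge}$, which allows one to translate $(f7')$ into a statement about $[[\prescript{\wedge}{}{p}]^{\wedge}]$; once this translation is in place both implications reduce to short chases through the definitions of $R$, $[\cdot]$, and $L$.
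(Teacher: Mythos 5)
Your proof is correct, and it follows essentially the same route as the paper's: both arguments hinge on the mutual inverseness of $\prescript{\wedge}{}{(-)}$ and $(-)^{\wedge}$ on standard sequences and on unwinding the definitions of $R$ and $[\cdot]$ to identify $\{v\in\mathscr{E}\mid vRL(p)\}$ with the $L$-image of $[[\prescript{\wedge}{}{p}]^{\wedge}]$. Your packaging of the direction $(f7')\Rightarrow(f7)$ via the general observation that $[X_1]=[X_2]$ iff $L(X_1)=L(X_2)$ is a minor stylistic reorganization of the paper's element-chase, not a different argument.
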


\begin{proof}
Suppose that $(f7)$ holds. For a standard sequence $p$ of $E$, if $v$ is a path in $\mathscr{E}$ with $vRL(p)$, then there exists standard sequences $q,r$ of $E$ with $L(p)=L(q)$, $v=L(r)$ and $\prescript{\wedge}{}{q}\equiv \prescript{\wedge}{}{r}$. Then $p\equiv q$ and $r\in[\prescript{\wedge}{}{q}]^{\wedge}\subseteq [[\prescript{\wedge}{}{q}]^{\wedge}]=[[\prescript{\wedge}{}{p}]^{\wedge}]$. So there exists some standard sequence $l$ of $E$ with $\prescript{\wedge}{}{p}\equiv \prescript{\wedge}{}{l}$ and $l\equiv r$, and therefore $v=L(l)$ belongs to the set $\{L(p')\mid\prescript{\wedge}{}{p}\equiv \prescript{\wedge}{}{p'}\}$. Then the condition $(f7')$ holds.

Conversely, suppose that $(f7')$ holds. For standard sequences $p,q$ of $E$ with $p\equiv q$ and for $r\in [[\prescript{\wedge}{}{p}]^{\wedge}]$, there exists some standard sequence $t$ of $E$ with $L(r)=L(t)$ and $\prescript{\wedge}{}{t}\equiv \prescript{\wedge}{}{p}$. Since $L(q)=L(p)$, $L(r)RL(q)$. Then by $(f7')$ we have $L(r)=L(s)$ for some standard sequence $s$ of $E$ with $\prescript{\wedge}{}{q}\equiv \prescript{\wedge}{}{s}$. Therefore $r\equiv s$ and $r\in [[\prescript{\wedge}{}{q}]^{\wedge}]$, which implies that $[[\prescript{\wedge}{}{p}]^{\wedge}]\subseteq [[\prescript{\wedge}{}{q}]^{\wedge}]$. Similarly, we have $[[\prescript{\wedge}{}{q}]^{\wedge}]\subseteq [[\prescript{\wedge}{}{p}]^{\wedge}]$, and the condition $(f7)$ holds.
\end{proof}

\begin{Lem}\label{equivalence relation}
If $E=(E,P,L,d)$ is an $f_s$-BC, that is, $E$ is an f-BC satisfying moreover $(f7)$, then $R$ is an equivalence relation on $\mathscr{E}$.
\end{Lem}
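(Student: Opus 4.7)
The plan is to exploit the reformulation $(f7')$ established in Lemma \ref{the-condition-equivalent-to-(f7)}, which gives an explicit description of the set of elements $R$-related to $L(p)$ as $\{L(p')\mid \prescript{\wedge}{}{p}\equiv\prescript{\wedge}{}{p'}\}$. Once this description is in hand, $R$ inherits the equivalence-relation property from the relation $\equiv$ on standard sequences (which is just equality of associated formal sequences in $Q_E$, and so is trivially an equivalence). Reflexivity is immediate: for any $u=L(p)\in\mathscr{E}$, take $q=p$ and use $\prescript{\wedge}{}{p}\equiv\prescript{\wedge}{}{p}$ to conclude $uRu$. Symmetry is built into the definition of $R$, since the condition on the pair of witnessing standard sequences $(p,q)$ is symmetric in $p$ and $q$.

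The real content is transitivity. Suppose $uRv$ and $vRw$, and pick any standard sequence $p_2$ with $v=L(p_2)$. Applying $(f7')$ to $p_2$, together with the symmetry of $R$, both $u$ and $w$ lie in the set $\{L(p')\mid \prescript{\wedge}{}{p_2}\equiv\prescript{\wedge}{}{p'}\}$. Hence there are standard sequences $p$ and $q$ with $u=L(p)$, $w=L(q)$, and $\prescript{\wedge}{}{p}\equiv\prescript{\wedge}{}{p_2}\equiv\prescript{\wedge}{}{q}$. Transitivity of $\equiv$ then yields $\prescript{\wedge}{}{p}\equiv\prescript{\wedge}{}{q}$, so $uRw$, as desired.

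The main (and only) substantive ingredient is the application of $(f7')$, which is where the hypothesis $(f7)$ actually enters. Without $(f7)$, the set $\{x\in\mathscr{E}\mid xRL(p)\}$ could depend on the chosen representative $p$ of the class $L(p)$, and ``gluing through $p_2$'' would be illegitimate; condition $(f7)$ guarantees precisely the invariance needed to make this gluing well-defined. Once that bridge has been crossed in the previous lemma, the argument for Lemma \ref{equivalence relation} is essentially a one-line manipulation of equivalence classes, and no further delicate point arises.
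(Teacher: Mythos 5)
Your proposal is correct and follows essentially the same route as the paper: reflexivity and symmetry are immediate from the definition of $R$, and transitivity is obtained by invoking the reformulation $(f7')$ from Lemma \ref{the-condition-equivalent-to-(f7)} and chaining the relation $\equiv$ on left complements. The only cosmetic difference is that you anchor the argument at the middle element $v$, whereas the paper applies $(f7')$ successively starting from $u$; the substance is identical.
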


\begin{proof}
By definition, $R$ is reflexive and symmetric. Suppose $uRv$ and $vRw$ for $u$, $v$, $w\in\mathscr{E}$. If $u=L(p)$ for some standard sequence $p$ of $E$, by Lemma \ref{the-condition-equivalent-to-(f7)}, $v=L(q)$ for some standard sequence $q$ of $E$ with $\prescript{\wedge}{}{p}\equiv \prescript{\wedge}{}{q}$, and $w=L(r)$ for some standard sequence $r$ of $E$ with $\prescript{\wedge}{}{q}\equiv \prescript{\wedge}{}{r}$. Therefore $\prescript{\wedge}{}{p}\equiv \prescript{\wedge}{}{r}$ and $uRw$.
\end{proof}

\begin{Lem}\label{key lemma}
Let $E=(E,P,L,d)$ be an $f_s$-BC, $u$, $v$ be paths of $Q_E$ such that $u-v$ is a relation of $I_E$ of type $(fR1)$. For each path $w$ of $Q_E$ whose source (resp. terminal) is equal to the terminal (resp. source) of $u$, one of the following holds: (1) $wu$, $wv$ (resp. $uw$, $vw$) are relations of $I_E$ of type $(fR2)$ or type $(fR3)$; (2) $wu-wv$ (resp. $uw-vw$) is a relation of $I_E$ of type $(fR1)$.
\end{Lem}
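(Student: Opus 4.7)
The plan is to translate paths of $Q_{E}$ into sequences of angles via Lemma~\ref{interpretation-of-(fR1)} and then to exploit the trichotomy $\mathscr{B}_1\sqcup\mathscr{B}_2\sqcup\mathscr{E}$ of Remark~\ref{disjoint-union}. Write $u=L(p)$, $v=L(q)$ with $p=(g^{n-1}\cdot e,\dots,e)$ and $q=(g^{n-1}\cdot h,\dots,h)$ standard sequences of common length $n\ge 1$ (positivity follows from the $(f6)$ argument in Remark~\ref{interpretations-to-the-ideal-$I_E$}) and $\prescript{\wedge}{}{p}\equiv\prescript{\wedge}{}{q}$. Each of $wu,wv$ lies in exactly one of $\mathscr{E},\mathscr{B}_1,\mathscr{B}_2$, with $\mathscr{B}_1$ (resp.\ $\mathscr{B}_2$) being the set of generators of $I_{E}$ of type $(fR2)$ (resp.\ $(fR3)$). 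It therefore suffices to prove the dichotomy: $wu\in\mathscr{E}$ if and only if $wv\in\mathscr{E}$, and in the positive case $wu-wv$ is a relation of type $(fR1)$. I treat the left-composition case $wu,wv$; the case $uw,vw$ is dual, using the equivalent form $[\prescript{\wedge}{}{[p^{\wedge}]}]=[\prescript{\wedge}{}{[q^{\wedge}]}]$ of $(f7)$ from Remark~\ref{two-conditions-are equivalent}(1).

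Assume $wu=L(p')$ with $p'=(g^{m+n-1}\cdot f,\dots,f)$ a standard sequence and $m=|w|$, so that $m+n\le d(f)$. Decompose $p'=r\tilde{p}$ into its final $n$ entries $\tilde{p}=(g^{n-1}\cdot f,\dots,f)$ and its initial $m$ entries $r=(g^{m+n-1}\cdot f,\dots,g^{n}\cdot f)$; matching $L(p')$ with $wL(p)$ in the last $n$ arrows yields $\tilde{p}\equiv p$. Applying $(f7)$ to $\tilde{p}\equiv p$ gives $[[\prescript{\wedge}{}{\tilde{p}}]^{\wedge}]=[[\prescript{\wedge}{}{p}]^{\wedge}]$. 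Using the involutive identity $\prescript{\wedge}{}{(s^{\wedge})}=s$ (valid for any standard sequence $s$ by a direct computation of sources and lengths), one rewrites $[\prescript{\wedge}{}{p}]^{\wedge}=\{q''\text{ standard}\mid\prescript{\wedge}{}{q''}\equiv\prescript{\wedge}{}{p}\}$, so $q$ itself lies in $[\prescript{\wedge}{}{p}]^{\wedge}\subseteq[[\prescript{\wedge}{}{\tilde{p}}]^{\wedge}]$. Unpacking the outer brackets produces a standard sequence $\tilde{q}$ with $\tilde{q}\equiv q$ and $\prescript{\wedge}{}{\tilde{q}}\equiv\prescript{\wedge}{}{\tilde{p}}$. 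Equal lengths of these complements force $d(f')=d(f)$, where $f'$ is the source of $\tilde{q}$, so $r':=(g^{m+n-1}\cdot f',\dots,g^{n}\cdot f')$ is a standard sequence at $f'$, and the identification $\prescript{\wedge}{}{\tilde{q}}\equiv\prescript{\wedge}{}{\tilde{p}}$ restricts on its initial $m$ entries to $r'\equiv r$. Setting $q':=r'\tilde{q}$ gives a standard sequence at $f'$ of length $m+n$ with $L(q')=L(r')L(\tilde{q})=L(r)L(q)=wL(q)=wv$, so $wv\in\mathscr{E}$. Finally, $\prescript{\wedge}{}{p'}$ and $\prescript{\wedge}{}{q'}$ are the common-length initial segments $(g^{d(f)-1}\cdot f,\dots,g^{m+n}\cdot f)$ and $(g^{d(f)-1}\cdot f',\dots,g^{m+n}\cdot f')$ of $\prescript{\wedge}{}{\tilde{p}}$ and $\prescript{\wedge}{}{\tilde{q}}$, hence $\prescript{\wedge}{}{p'}\equiv\prescript{\wedge}{}{q'}$, which exhibits $wu-wv=L(p')-L(q')$ as a $(fR1)$ relation. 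Swapping the roles of $p$ and $q$ gives the converse implication, and when both $wu,wv\notin\mathscr{E}$ they lie in $\mathscr{B}_1\cup\mathscr{B}_2$ and are generators of type $(fR2)$ or $(fR3)$.

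The main obstacle is the correct use of $(f7)$ together with the $\prescript{\wedge}{}{(\cdot)}$, $(\cdot)^{\wedge}$ bookkeeping: the key insight is that $[\prescript{\wedge}{}{p}]^{\wedge}$ is exactly the set of standard sequences whose left complement is identical to $\prescript{\wedge}{}{p}$, so that $(f7)$ immediately produces a partner $\tilde{q}\equiv q$ whose source $f'$ satisfies $d(f')=d(f)$. Once this length compatibility is in place, the extension to $q'=r'\tilde{q}$ and the verification of the $(fR1)$ form of $wu-wv$ are purely formal. The right-composition case is formally identical: one decomposes the ambient standard sequence so that $p$ appears as the initial segment $r\tilde{p}$ with $\tilde{p}\equiv p$, and applies the dual bracket condition $[\prescript{\wedge}{}{[p^{\wedge}]}]=[\prescript{\wedge}{}{[q^{\wedge}]}]$ to transfer from $\tilde{p}$ to the desired $\tilde{q}$.
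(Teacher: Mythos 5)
Your argument is essentially the paper's own proof: you decompose the ambient standard sequence $p'$ with $L(p')=wu$ as $r\tilde p$ with $\tilde p\equiv p$, use $(f7)$ to transport $q$ to a standard sequence $\tilde q\equiv q$ with $\prescript{\wedge}{}{\tilde q}\equiv\prescript{\wedge}{}{\tilde p}$, splice off the initial segment to get $q'=r'\tilde q$ with $L(q')=wv$, and compare left complements; the only difference is that you re-derive inline what the paper isolates as Lemma \ref{the-condition-equivalent-to-(f7)}. One inaccuracy to fix: a relation of type $(fR1)$ does \emph{not} force $u$ and $v$ to have a common length $n$, since angles in the same polygon may have different degrees (e.g.\ $L(3')-L(2)L(1)L(3)$ in Example \ref{example1}); consequently $d(f')=d(f)$ can fail and your formula $r'=(g^{m+n-1}\cdot f',\dots,g^{n}\cdot f')$ should use $n'=l(q)$ in place of $n$. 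The argument survives this repair verbatim, since all that is actually used is $\prescript{\wedge}{}{\tilde q}\equiv\prescript{\wedge}{}{\tilde p}$ (which gives $d(f')-n'=d(f)-n$ and lets $r'$ be read off as the initial $m$ entries of $\prescript{\wedge}{}{\tilde q}$), so this is a notational slip rather than a gap.
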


\begin{proof}
Since $u-v$ is a relation of $I_E$ of type $(fR1)$, $u$, $v$ are paths of length $\geq 1$ in $\mathscr{E}$ with $uRv$. Let $w$ be a path of $Q_E$ whose source is equals to the terminal of $u$.
We may assume that $l(w)>0$. If $wu$ is neither a relation of $I_E$ of type $(fR2)$ nor a relation of $I_E$ of type $(fR3)$, then $wu\in\mathscr{E}$. Let $wu=L(p)$, where $p$ is a nontrivial standard sequence of $E$. Write $p=p_2 p_1$, where $u=L(p_1)$ and $w=L(p_2)$. Since $l(w), l(u)>0$, both $p_1$ and $p_2$ are nontrivial standard sequences of $E$. Since $uRv$, by Lemma \ref{the-condition-equivalent-to-(f7)}, $v=L(q_1)$ for some standard sequence $q_1$ of $E$ with $\prescript{\wedge}{}{p_1}\equiv \prescript{\wedge}{}{q_1}$. Write $\prescript{\wedge}{}{p_1}=p_3 p_2$, where $p_3$ is a standard sequence of $E$. Moreover, write $\prescript{\wedge}{}{q_1}=q_3 q_2$ such that $q_2 \equiv p_2$ and $q_3 \equiv p_3$. Then $p_3=\prescript{\wedge}{}{(p_2 p_1)}$ and $q_3=\prescript{\wedge}{}{(q_2 q_1)}$. Since $wu=L(p_2 p_1)$ and $wv=L(q_2 q_1)$ with $\prescript{\wedge}{}{(p_2 p_1)}=p_3\equiv q_3=\prescript{\wedge}{}{(q_2 q_1)}$, $(wu)R(wv)$ and $wu-wv$ is a relation of $I_E$ of type $(fR1)$.
\end{proof}

For a set $\mathscr{A}$ of paths of $Q_E$ and for every two vertices $x$, $y$ of $Q_E$, denote $\prescript{}{y}{\mathscr{A}}_x$ be the subset of $\mathscr{A}$ consists of paths with source $x$ and terminal $y$. For a subset $S$ of any $k$-vector space $V$, we denote by $kS$ the $k$-subspace of $V$ generated by $S$.

\begin{Lem}\label{a basis of ideal I E}
Let $E=(E,P,L,d)$ be an $f_s$-BC. For every two vertices $x$, $y$ of $Q_E$, $I_E(x,y)=k\prescript{}{y}{(\mathscr{B}_1)}_x\bigoplus k\prescript{}{y}{(\mathscr{B}_2)}_x\bigoplus k\{u-v\mid u,v\in\prescript{}{y}{\mathscr{E}}_x$ and $uRv\}$. In particular, the relations of types $(fR1), (fR2), (fR3)$ generate $I_E$ as a $k$-vector space in this case.
\end{Lem}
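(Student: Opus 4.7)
The plan is to verify three things: that each of the three summands on the right-hand side is contained in $I_E(x,y)$, that their sum is direct, and that every element of $I_E(x,y)$ admits such a decomposition. The first point is immediate from the definitions: paths in $\mathscr{B}_1$ and in $\mathscr{B}_2$ are relations of types $(fR2)$ and $(fR3)$ respectively by definition, while for $u,v\in\prescript{}{y}{\mathscr{E}}_x$ with $uRv$, the difference $u-v$ is either zero (when both are trivial paths, by the remark immediately following Definition \ref{relation R}) or a relation of type $(fR1)$. Directness of the sum is automatic: by Remark \ref{disjoint-union} the paths of $Q_E$ from $x$ to $y$ split as a disjoint union $\prescript{}{y}{(\mathscr{B}_1)}_x\sqcup\prescript{}{y}{(\mathscr{B}_2)}_x\sqcup\prescript{}{y}{\mathscr{E}}_x$, so $kQ_E(x,y)$ decomposes as the corresponding direct sum of $k$-subspaces, and the third summand of the claimed expression is contained in $k\prescript{}{y}{\mathscr{E}}_x$.

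The substantive inclusion is the reverse one. A spanning element of $I_E(x,y)$ has the form $w_1 r w_2$, where $r$ runs over the three listed kinds of generators and $w_1,w_2$ are paths (possibly trivial) making the concatenation well-defined. Before the case analysis I intend to record the following propagation principle: if a path $p$ of $Q_E$ contains some $p_0\in\mathscr{B}_1$ as a subpath, then $p\in\mathscr{B}_1$ (since emptiness of the intersection appearing in Lemma \ref{interpretation-of-(fR1)} is preserved under extension); and if $p$ contains some $p_0\in\mathscr{B}_2$ as a subpath, then $p\in\mathscr{B}_1\cup\mathscr{B}_2$. The second statement is where Lemma \ref{independence} enters: were $p$ to lie in $\mathscr{E}$, then $p=L(q)$ for a standard sequence $q$, and restricting to $p_0$ would present $p_0$ both as $L(g^{n-1}\cdot e)\cdots L(e)$ with $n>d(e)$ (from $p_0\in\mathscr{B}_2$) and as $L(g^{n-1}\cdot e')\cdots L(e')$ with $n\leq d(e')$ (as a consecutive subsequence of a standard sequence based at $e'$), which contradicts Lemma \ref{independence}.

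With this principle available, when $r\in\mathscr{B}_1$ or $r\in\mathscr{B}_2$ the product $w_1 r w_2$ lies in $\mathscr{B}_1\cup\mathscr{B}_2$ and hence in the first two summands. When $r=u-v$ is a type $(fR1)$ relation, I would apply Lemma \ref{key lemma} first with $w_1$ on the left: either both $w_1 u,w_1 v\in\mathscr{B}_1\cup\mathscr{B}_2$, whence the propagation principle gives $w_1 u w_2,w_1 v w_2\in\mathscr{B}_1\cup\mathscr{B}_2$ and the difference lands in $k\mathscr{B}_1\oplus k\mathscr{B}_2$; or $w_1 u-w_1 v$ is again a type $(fR1)$ relation. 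In the latter subcase a further application of Lemma \ref{key lemma} with $w_2$ on the right forces the same dichotomy, so that $w_1(u-v)w_2$ either sits in $k\mathscr{B}_1\oplus k\mathscr{B}_2$ or is itself a type $(fR1)$ relation and therefore lies in the third summand. Linearity then extends the conclusion from generators of $I_E$ to arbitrary elements of $I_E(x,y)$.

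The step I expect to cost the most care is the propagation principle for $\mathscr{B}_2$: ruling out that prepending or appending a path to a $\mathscr{B}_2$ path could accidentally push it back into $\mathscr{E}$ really requires Lemma \ref{independence} together with the uniqueness-of-form built into $(f6)$. Everything else is systematic bookkeeping that iterates Lemma \ref{key lemma} on either side of the substituted type $(fR1)$ relation.
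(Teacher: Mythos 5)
Your proposal is correct and follows essentially the same route as the paper: both handle generators of types $(fR2)$ and $(fR3)$ by observing that pre- or post-composing such a path keeps it in $\mathscr{B}_1\cup\mathscr{B}_2$, and both treat a type $(fR1)$ generator by two successive applications of Lemma \ref{key lemma}, on the left and on the right. Your explicit ``propagation principle'' (justified via Lemma \ref{interpretation-of-(fR1)} and Lemma \ref{independence}) and the directness check merely spell out steps the paper labels as straightforward.
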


\begin{proof}
Each element $\eta$ of $I_E(x,y)$ is of the form $\sum\lambda_i u_i r_i v_i$, where $\lambda_i\in k^{*}$, $u_i's$, $v_i's$ are paths of $Q_E$, and $r_i's$ are relations of $I_E$ of type $(fR1)$, type $(fR2)$, or of type $(fR3)$. If $r_i$ is a relation of $I_E$ of type $(fR2)$ or of type $(fR3)$, it is straightforward to show that $u_i r_i v_i$ is also a relation of $I_E$ of type $(fR2)$ or of type $(fR3)$. Therefore $u_i r_i v_i\in \prescript{}{y}{(\mathscr{B}_1)}_x\sqcup\prescript{}{y}{(\mathscr{B}_2)}_x$. If $r_i$ is a relation of $I_E$ of type $(fR1)$, by Lemma \ref{key lemma}, either $u_i r_i$ is a relation of $I_E$ of type $(fR1)$ or $u_i r_i\in k\prescript{}{y}{(\mathscr{B}_1)}_z\bigoplus k\prescript{}{y}{(\mathscr{B}_2)}_z$, where $z$ is the source of $r_i$. Therefore either $u_i r_i v_i$ is a relation of $I_E$ of type $(fR1)$ or $u_i r_i v_i\in k\prescript{}{y}{(\mathscr{B}_1)}_x\bigoplus k\prescript{}{y}{(\mathscr{B}_2)}_x$, which also follows by Lemma \ref{key lemma}. Then we imply $u_i r_i v_i\in k\prescript{}{y}{(\mathscr{B}_1)}_x\bigoplus k\prescript{}{y}{(\mathscr{B}_2)}_x\bigoplus k\{u-v\mid u,v\in\mathscr{E}$ and $uRv\}$. So $I_E(x,y)\subseteq k\prescript{}{y}{(\mathscr{B}_1)}_x\bigoplus k\prescript{}{y}{(\mathscr{B}_2)}_x\bigoplus k\{u-v\mid u,v\in\prescript{}{y}{\mathscr{E}}_x$ and $uRv\}$. The fact that $k\prescript{}{y}{(\mathscr{B}_1)}_x\bigoplus k\prescript{}{y}{(\mathscr{B}_2)}_x\bigoplus k\{u-v\mid u,v\in\prescript{}{y}{\mathscr{E}}_x$ and $uRv\}$ is contained in $I_E(x,y)$ follows from the definition of $I_E$.
\end{proof}

\begin{Rem1}\label{full-squences-are-nonzero}
Lemma \ref{a basis of ideal I E} shows that if $E$ is an $f_s$-BC, then every path in $\mathscr{E}$ is not contained in $I_E$, and therefore it is nonzero in $\Lambda_E$. This shows that Condition $(f7)$ ensures that the nonzero paths precisely correspond to standard sequences of $E$.
\end{Rem1}

\begin{Prop}\label{a basis of J}
Let $E=(E,P,L,d)$ be an $f_s$-BC, $\mathscr{C}$ be a set of representatives of paths in $\mathscr{E}$ under equivalence relation $R$. Then for every two vertices $x$, $y$ of $Q_E$, the image of $\prescript{}{y}{\mathscr{C}}_x$ in $\Lambda(x,y)$ forms a $k$-basis of $\Lambda(x,y)$.
\end{Prop}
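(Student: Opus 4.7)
The plan is to combine Lemma \ref{a basis of ideal I E} with the disjoint-union decomposition in Remark \ref{disjoint-union} to control both spanning and linear independence simultaneously. Since the paths of $Q_E$ from $x$ to $y$ form a $k$-basis of $(kQ_E)(x,y)$, and since they split as the disjoint union $\prescript{}{y}{(\mathscr{B}_1)}_x \sqcup \prescript{}{y}{(\mathscr{B}_2)}_x \sqcup \prescript{}{y}{\mathscr{E}}_x$, the whole computation reduces to analysing how $I_E(x,y)$ interacts with this three-part decomposition.

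For the spanning property, I would take any morphism in $\Lambda_E(x,y)$ and lift it to a $k$-linear combination of paths. Paths in $\prescript{}{y}{(\mathscr{B}_1)}_x$ and $\prescript{}{y}{(\mathscr{B}_2)}_x$ are themselves relations of type $(fR2)$ and $(fR3)$ respectively, and hence vanish modulo $I_E$. For each $u \in \prescript{}{y}{\mathscr{E}}_x$, Lemma \ref{equivalence relation} guarantees that $R$ is an equivalence relation on $\mathscr{E}$, so $u$ is $R$-equivalent to a unique representative $c_u \in \prescript{}{y}{\mathscr{C}}_x$. If both $u$ and $c_u$ have positive length then $u - c_u$ is a relation of type $(fR1)$; if either has length zero then in fact $u = c_u$, by the remark following Definition \ref{relation R}. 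In either case $u \equiv c_u \pmod{I_E(x,y)}$, and therefore the image of $\prescript{}{y}{\mathscr{C}}_x$ spans $\Lambda_E(x,y)$.

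For linear independence, suppose $\sum_i \lambda_i c_i \in I_E(x,y)$ for pairwise distinct $c_i \in \prescript{}{y}{\mathscr{C}}_x$ and scalars $\lambda_i \in k$. By Lemma \ref{a basis of ideal I E}, this element equals $\alpha + \beta + \sum_j \mu_j (u_j - v_j)$ with $\alpha \in k\prescript{}{y}{(\mathscr{B}_1)}_x$, $\beta \in k\prescript{}{y}{(\mathscr{B}_2)}_x$, and $u_j, v_j \in \prescript{}{y}{\mathscr{E}}_x$ satisfying $u_j R v_j$. Since $\prescript{}{y}{(\mathscr{B}_1)}_x \sqcup \prescript{}{y}{(\mathscr{B}_2)}_x \sqcup \prescript{}{y}{\mathscr{E}}_x$ is a $k$-basis of $(kQ_E)(x,y)$, comparing the $\mathscr{B}_1$- and $\mathscr{B}_2$-components forces $\alpha = \beta = 0$ and reduces the identity to one inside $k\prescript{}{y}{\mathscr{E}}_x$. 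Partitioning $\prescript{}{y}{\mathscr{E}}_x$ by $R$-equivalence classes (using Lemma \ref{equivalence relation} again), each class contains exactly one element of $\mathscr{C}$, while within each class the total coefficient contributed by the differences $u_j - v_j$ is zero. Reading off, for each $i$, the coefficient of the representative $c_i$ within its own $R$-class then forces $\lambda_i = 0$.

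The main potential obstacle is ensuring that both ingredients stated in the previous lemmas are really available: that $R$ is an equivalence relation on $\mathscr{E}$ (Lemma \ref{equivalence relation}) and that $I_E(x,y)$ decomposes cleanly as in Lemma \ref{a basis of ideal I E}. Both of these ultimately depend on the hypothesis that $E$ is of type S, via the auxiliary condition $(f7)$; once they are in place, the remainder is a straightforward bookkeeping exercise in the path basis of $kQ_E$, and no further delicate combinatorial input is needed.
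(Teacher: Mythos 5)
Your proof is correct and takes essentially the same approach as the paper, whose entire argument is the one-line observation that the claim follows from Lemma \ref{a basis of ideal I E} together with the fact that $\prescript{}{y}{(\mathscr{B}_1)}_x\sqcup\prescript{}{y}{(\mathscr{B}_2)}_x\sqcup\prescript{}{y}{\mathscr{E}}_x$ is a $k$-basis of $kQ_E(x,y)$. You have simply written out the spanning and linear-independence bookkeeping that the paper leaves implicit, and all of it checks out.
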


\begin{proof}
It follows from Lemma \ref{a basis of ideal I E} and the fact that $\prescript{}{y}{(\mathscr{B}_1)}_x\sqcup\prescript{}{y}{(\mathscr{B}_2)}_x\sqcup\prescript{}{y}{\mathscr{E}}_x$ forms a basis of $kQ_E(x,y)$.
\end{proof}

Note that for $u$, $v\in\mathscr{E}$ with $uRv$, we have $u=v$ in $\Lambda_E$, therefore the image of $\prescript{}{y}{\mathscr{C}}_x$ in $\Lambda_E(x,y)$ is independent to the choice of representatives of paths in $\mathscr{E}$ under equivalence relation $R$.

Let $E=(E,P,L,d)$ be an $f_s$-BC. For every two objects $x$, $y$ of $\Lambda_E$ and for $u\in\prescript{}{y}{\mathscr{C}}_x$, let $u=L(p)$ for some standard sequence $p$. Define $\prescript{\wedge}{}{u}$ to be the unique path in $\prescript{}{\sigma(x)}{\mathscr{C}}_y$ such that $\prescript{\wedge}{}{u}RL(\prescript{\wedge}{}{p})$, and $u^{\wedge}$ to be the unique path in $\prescript{}{x}{\mathscr{C}}_{\sigma^{-1}(y)}$ such that $u^{\wedge}RL(p^{\wedge})$.

Note that $\prescript{\wedge}{}{(-)}$ is a well-defined map from $\prescript{}{y}{\mathscr{C}}_x$ to $\prescript{}{\sigma(x)}{\mathscr{C}}_y$: if $u=L(p)=L(q)$ for standard sequences $p$, $q$ of $E$, then $p\equiv q$ and $\prescript{\wedge\wedge}{}{p}\equiv\prescript{\wedge\wedge}{}{q}$, therefore $L(\prescript{\wedge}{}{p}) R L(\prescript{\wedge}{}{q})$. Similarly, $(-)^{\wedge}$ is a well-defined map from $\prescript{}{y}{\mathscr{C}}_x$ to $\prescript{}{x}{\mathscr{C}}_{\sigma^{-1}(y)}$.

\begin{Lem}\label{inverse}
Let $E=(E,P,L,d)$ be an $f_s$-BC. For every two objects $x$, $y$ of $\Lambda_E$, $(-)^{\wedge}:\prescript{}{\sigma(x)}{\mathscr{C}}_y\rightarrow\prescript{}{y}{\mathscr{C}}_x$ is the inverse of $\prescript{\wedge}{}{(-)}:\prescript{}{y}{\mathscr{C}}_x\rightarrow\prescript{}{\sigma(x)}{\mathscr{C}}_y$.
\end{Lem}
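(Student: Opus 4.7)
The plan is to prove $(\prescript{\wedge}{}{u})^{\wedge} = u$ for every $u \in \prescript{}{y}{\mathscr{C}}_x$; the symmetric identity $\prescript{\wedge}{}{(v^{\wedge})} = v$ for $v \in \prescript{}{\sigma(x)}{\mathscr{C}}_y$ follows by the same argument with $\prescript{\wedge}{}{(-)}$ and $(-)^{\wedge}$ swapped, so I will only describe the first direction.

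First I would unwind the definitions. Write $u = L(p)$ for some standard sequence $p$. By construction, $\prescript{\wedge}{}{u}$ is the unique element of $\prescript{}{\sigma(x)}{\mathscr{C}}_y$ that is $R$-related to $L(\prescript{\wedge}{}{p})$. Applying Lemma \ref{the-condition-equivalent-to-(f7)} (which is where $(f7)$ enters) to the standard sequence $\prescript{\wedge}{}{p}$, I may pick a standard sequence $r$ with $\prescript{\wedge}{}{u} = L(r)$ and $\prescript{\wedge}{}{r} \equiv \prescript{\wedge}{}{(\prescript{\wedge}{}{p})}$. Unwinding $(-)^{\wedge}$ once more shows that $(\prescript{\wedge}{}{u})^{\wedge}$ is the unique element of $\prescript{}{y}{\mathscr{C}}_x$ that is $R$-related to $L(r^{\wedge})$.

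The crux of the argument is the purely combinatorial identity $(\prescript{\wedge}{}{p})^{\wedge} = p$ for every standard sequence $p$, which I would verify directly from the formulas given right after Definition \ref{standard sequence}, checking the three cases $0 < n < d(e)$, $n = 0$ (so $p$ is trivial), and $n = d(e)$ (so $p$ is full). Granting this identity, Remark \ref{identical}(3) converts $\prescript{\wedge}{}{r} \equiv \prescript{\wedge}{}{(\prescript{\wedge}{}{p})}$ into $r^{\wedge} \equiv (\prescript{\wedge}{}{p})^{\wedge} = p$, so $L(r^{\wedge}) = L(p) = u$. Since $R$ is reflexive by Lemma \ref{equivalence relation} and $u$ itself already lies in $\prescript{}{y}{\mathscr{C}}_x$, the uniqueness of the representative forces $(\prescript{\wedge}{}{u})^{\wedge} = u$, as required.

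The main obstacle I anticipate is not conceptual but bookkeeping: tracking the source, the terminal, and the shift by the Nakayama automorphism $\sigma$ correctly, and in particular the boundary cases $n = 0$ and $n = d(e)$ of the identity $(\prescript{\wedge}{}{p})^{\wedge} = p$, where one of the two complements degenerates into a trivial sequence and the index arithmetic has to be read via Remark \ref{remarks-on-the definition-of-f-BC}(7) together with the explicit piecewise formulas. Once these boundary computations are carried out, everything else is a routine application of Lemma \ref{the-condition-equivalent-to-(f7)} and Remark \ref{identical}(3).
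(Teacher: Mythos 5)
Your argument is correct and follows essentially the same route as the paper's proof: write $u=L(p)$, pick a standard-sequence representative $r$ of $\prescript{\wedge}{}{u}$ whose left complement is identified with that of $\prescript{\wedge}{}{p}$, flip via Remark \ref{identical}(3) together with the combinatorial identity $(\prescript{\wedge}{}{p})^{\wedge}=p$, and conclude from the uniqueness of representatives in $\mathscr{C}$. The only cosmetic differences are that you normalize the second representative to $\prescript{\wedge}{}{p}$ itself by invoking Lemma \ref{the-condition-equivalent-to-(f7)} (the paper works with an arbitrary $q_2$ satisfying $L(q_2)=L(\prescript{\wedge}{}{p})$ directly from the definition of $R$) and that you make explicit the identity $(\prescript{\wedge}{}{p})^{\wedge}=p$, which the paper uses only implicitly.
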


\begin{proof}
For $u\in\prescript{}{y}{\mathscr{C}}_x$ with $u=L(p)$ for some standard sequence $p$, $\prescript{\wedge}{}{u}$ is a path in $\mathscr{C}$ with $\prescript{\wedge}{}{u}RL(\prescript{\wedge}{}{p})$. Then there exists standard sequences $q_1$, $q_2$ of $E$ with $\prescript{\wedge}{}{u}=L(q_1)$, $L(\prescript{\wedge}{}{p})=L(q_2)$ and $\prescript{\wedge}{}{q_1}\equiv\prescript{\wedge}{}{q_2}$. By Remark \ref{identical} (3), $q_{1}^{\wedge}\equiv q_{2}^{\wedge}$, and therefore $L(q_{1}^{\wedge})=L(q_{2}^{\wedge})$. Since $L(\prescript{\wedge}{}{p})=L(q_2)$, $L(p)RL(q_{2}^{\wedge})$. By the definition of $(-)^{\wedge}$, we have $(\prescript{\wedge}{}{u})^{\wedge}RL(q_{1}^{\wedge})$. So $(\prescript{\wedge}{}{u})^{\wedge}RL(p)$, where $L(p)=u$. Since both $u$ and $(\prescript{\wedge}{}{u})^{\wedge}$ belong to $\prescript{}{y}{\mathscr{C}}_x$, $(\prescript{\wedge}{}{u})^{\wedge}=u$. It can be shown similarly that $\prescript{\wedge}{}{(u^{\wedge})}=u$.
\end{proof}

For each object $x$ of $\Lambda_E$, let $w$ be the unique path in $\prescript{}{\sigma(x)}{\mathscr{C}}_x$ which corresponds to a full sequence. Define a linear form $\epsilon:\Lambda_E(x,\sigma(x))\rightarrow k$ by sending $w$ to $1$ and sending other $u\in\prescript{}{\sigma(x)}{\mathscr{C}}_x$ to zero. For every objects $x$, $y$ of $\Lambda_E$, let $\langle-,-\rangle:\Lambda_{E}(y,\sigma(x))\times\Lambda_{E}(x,y)\rightarrow k$ be the bilinear form defined by the composition $\Lambda_{E}(y,\sigma(x))\times\Lambda_{E}(x,y)\xrightarrow[]{multi}\Lambda_{E}(x,\sigma(x))\xrightarrow[]{\epsilon} k$.

\begin{Lem}\label{non-degenerate}
For every two objects $x$, $y$ of $\Lambda_E$, the bilinear form $\langle-,-\rangle:\Lambda_{E}(y,\sigma(x))\times\Lambda_{E}(x,y)\rightarrow k$ defined by $\langle a,b\rangle=\epsilon(ab)$ is non-degenerate.
\end{Lem}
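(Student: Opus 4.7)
The idea is to exhibit an explicit dual basis realising the bijection $\prescript{\wedge}{}{(-)}$ of Lemma~\ref{inverse}. By Proposition~\ref{a basis of J}, the image of $\prescript{}{y}{\mathscr{C}}_x$ is a $k$-basis of $\Lambda_E(x,y)$ and the image of $\prescript{}{\sigma(x)}{\mathscr{C}}_y$ is a $k$-basis of $\Lambda_E(y,\sigma(x))$. The plan is to show that the Gram matrix $(\epsilon(ab))_{a,b}$ of $\langle-,-\rangle$ in these bases is the permutation matrix of $\prescript{\wedge}{}{(-)}\colon \prescript{}{y}{\mathscr{C}}_x \to \prescript{}{\sigma(x)}{\mathscr{C}}_y$, whence non-degeneracy follows.

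First I will compute the diagonal entries. For $b\in\prescript{}{y}{\mathscr{C}}_x$, write $b=L(p)$ for a standard sequence $p$. By the definition of $\prescript{\wedge}{}{b}$, we have $\prescript{\wedge}{}{b}\,R\,L(\prescript{\wedge}{}{p})$, so the two are identified in $\Lambda_E$ by an $(fR1)$ relation, giving $\prescript{\wedge}{}{b}\cdot b = L(\prescript{\wedge}{}{p}\,p)$. Since $\prescript{\wedge}{}{p}\,p$ is a full sequence with source $x$, this product equals the distinguished element $w\in\prescript{}{\sigma(x)}{\mathscr{C}}_x$ used to define $\epsilon$, hence $\langle\prescript{\wedge}{}{b},b\rangle=1$.

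Next I will show that $\langle a,b\rangle=0$ for every $a\in\prescript{}{\sigma(x)}{\mathscr{C}}_y$ with $a\neq\prescript{\wedge}{}{b}$. Writing $a=L(q)$, the product $ab=L(qp)$ in $kQ_E$ lies in one of the three disjoint classes $\mathscr{B}_1,\mathscr{B}_2,\mathscr{E}$ of Remark~\ref{disjoint-union}. In the first two cases $ab=0$ in $\Lambda_E$ and there is nothing to do. In the remaining case $L(qp)=L(s)$ for a standard sequence $s$, and $\epsilon(ab)=1$ would force $L(s)\,R\,L(r)$ for some (any) full sequence $r$ from $x$ to $\sigma(x)$. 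By Lemma~\ref{the-condition-equivalent-to-(f7)} this gives a full sequence $s'$ with $L(s')=L(s)$; decomposing $s'=s'_2s'_1$ to match the lengths of $p$ and $q$ yields $s'_1\equiv p$, $s'_2\equiv q$, and since $s'$ is full, $s'_2=\prescript{\wedge}{}{s'_1}$. The crucial remaining step is to deduce from $s'_1\equiv p$ that $L(\prescript{\wedge}{}{s'_1})\,R\,L(\prescript{\wedge}{}{p})$, for then $a=[L(q)]=[L(\prescript{\wedge}{}{s'_1})]=[L(\prescript{\wedge}{}{p})]=\prescript{\wedge}{}{b}$, contradicting $a\neq\prescript{\wedge}{}{b}$.

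This last implication is the main obstacle, and is where condition $(f7)$ is essential. By Theorem~\ref{f-BC algebra is locally bounded}(3) and $(f5)$, the Nakayama automorphism $\sigma$ is an automorphism of $Q_E$ that preserves the partition $L$, so $\equiv$ is $\sigma$-equivariant. A direct case check (for $p$ trivial, full, or of intermediate length) shows $\prescript{\wedge}{}{\prescript{\wedge}{}{t}}=\sigma(t)$ for every standard sequence $t$. Applying $\sigma$ to $s'_1\equiv p$ therefore gives $\prescript{\wedge}{}{\prescript{\wedge}{}{s'_1}}\equiv\prescript{\wedge}{}{\prescript{\wedge}{}{p}}$, which by the very definition of $R$ (with $u_1=\prescript{\wedge}{}{s'_1}$, $u_2=\prescript{\wedge}{}{p}$) yields $L(\prescript{\wedge}{}{s'_1})\,R\,L(\prescript{\wedge}{}{p})$, as required. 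Combining the three steps, the Gram matrix of $\langle-,-\rangle$ is the permutation matrix of the bijection $\prescript{\wedge}{}{(-)}$ of Lemma~\ref{inverse}, and hence is non-degenerate.
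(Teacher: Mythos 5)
Your proposal is correct and follows essentially the same route as the paper: both arguments show that in the bases given by $\prescript{}{y}{\mathscr{C}}_x$ and $\prescript{}{\sigma(x)}{\mathscr{C}}_y$ the Gram matrix is the permutation matrix of the bijection $\prescript{\wedge}{}{(-)}$ from Lemma \ref{inverse}, using Lemma \ref{a basis of ideal I E} to rule out the zero-product cases and decomposing the resulting full sequence to identify $a$ with $\prescript{\wedge}{}{b}$. Your ``crucial remaining step'' ($s'_1\equiv p$ implies $L(\prescript{\wedge}{}{s'_1})\,R\,L(\prescript{\wedge}{}{p})$, via $\prescript{\wedge}{}{\prescript{\wedge}{}{t}}=\sigma(t)$ and $(f5)$) is exactly the well-definedness of $\prescript{\wedge}{}{(-)}$ that the paper establishes just before Lemma \ref{inverse} and then invokes silently, so the two proofs coincide in substance.
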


\begin{proof}
It suffices to show for every $u\in\prescript{}{y}{\mathscr{C}}_x$ and $v\in\prescript{}{\sigma(x)}{\mathscr{C}}_y$,
\begin{equation*}\langle v,u\rangle= \begin{cases}
1, & v=\prescript{\wedge}{}{u}; \\
0, & \text{otherwise}.
\end{cases}
\end{equation*}
Note that for $u\in\prescript{}{y}{\mathscr{C}}_x$ and $v\in\prescript{}{\sigma(x)}{\mathscr{C}}_y$, either $vu=0$ in $\Lambda_E$ or $(vu)Rw$ for some $w\in\prescript{}{\sigma(x)}{\mathscr{C}}_x$, so the value of $\langle v,u\rangle$ can only take $0$ or $1$. When $v=\prescript{\wedge}{}{u}$, suppose that $u=L(p)$ for some standard sequence $p$. Then $\prescript{\wedge}{}{u}=L(\prescript{\wedge}{}{p})$ in $\Lambda_E$ and $\langle v,u\rangle=\epsilon(\prescript{\wedge}{}{u}u)=\epsilon(L(\prescript{\wedge}{}{p})L(p))=\epsilon(L(\prescript{\wedge}{}{p}p))=1$. Conversely, suppose that $\langle v,u\rangle=1$. Since $\epsilon(vu)=\langle v,u\rangle=1$, $vu\neq 0$ in $\Lambda_E$. By Lemma \ref{a basis of ideal I E}, $vu\in\mathscr{E}$. By the definition of $\epsilon$, $vu=L(q)$ for some full sequence $q$ of $E$. Write $q=q_2 q_1$ such that $L(q_1)=u$ and $L(q_2)=v$. Then $\prescript{\wedge}{}{u}RL(\prescript{\wedge}{}{q_1})$ and $L(\prescript{\wedge}{}{q_1})=L(q_2)=v$ imply $\prescript{\wedge}{}{u}Rv$. Since both $\prescript{\wedge}{}{u}$ and $v$ belong to $\mathscr{C}$, we have $\prescript{\wedge}{}{u}=v$.
\end{proof}

We now show that the associated locally bounded category of an $f_s$-BC is a Frobenius category.

\begin{Thm}\label{isomorphism of proj and inj}
Let $E$ be an $f_s$-BC. Then $\Lambda_{E}(-,\sigma(x))\cong D\Lambda_{E}(x,-)$ for all object $x$ of $\Lambda_E$. Therefore the associated f-BCC $\Lambda_E$ is a locally bounded Frobenius category.
\end{Thm}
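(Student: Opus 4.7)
The plan is to extract the desired functorial isomorphism directly from the non-degenerate pairing established in Lemma \ref{non-degenerate}, then use the Nakayama automorphism $\sigma$ to obtain both halves of Definition \ref{Frobenius category}.

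First, for each object $y$ of $\Lambda_E$, define a $k$-linear map
\[
\phi_y : \Lambda_E(y,\sigma(x)) \longrightarrow D\Lambda_E(x,y), \qquad \phi_y(v)(u) = \epsilon(vu) = \langle v,u\rangle.
\]
By Lemma \ref{non-degenerate} the pairing $\langle-,-\rangle$ is non-degenerate, and by Theorem \ref{f-BC algebra is locally bounded} both Hom-spaces are finite-dimensional, so $\phi_y$ is a $k$-linear isomorphism.

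Second, I would verify that the family $\{\phi_y\}_{y\in\Lambda_E}$ is natural in $y$, i.e.\ defines an isomorphism of contravariant functors $\Lambda_E(-,\sigma(x)) \cong D\Lambda_E(x,-)$. Given $f\in\Lambda_E(y,y')$, $v\in\Lambda_E(y',\sigma(x))$ and $u\in\Lambda_E(x,y)$, naturality reduces to
\[
\phi_y(vf)(u) = \epsilon((vf)u) = \epsilon(v(fu)) = \phi_{y'}(v)(fu),
\]
which is nothing but associativity of composition in $\Lambda_E$ followed by applying $\epsilon$. This is the step that, apart from Lemma \ref{non-degenerate}, carries the weight of the proof, but it is essentially automatic once the pairing is set up as in the definition of $\epsilon$.

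Finally, to conclude that $\Lambda_E$ is a locally bounded Frobenius category in the sense of Definition \ref{Frobenius category}, I need to exhibit, for every object $x$, objects $y,z$ with $\Lambda_E(-,x)\cong D\Lambda_E(y,-)$ and $\Lambda_E(x,-)\cong D\Lambda_E(-,z)$. Recall from Theorem \ref{f-BC algebra is locally bounded}(3) that $\sigma$ is an automorphism of $\Lambda_E$, so $\sigma^{-1}(x)$ is a well-defined object. Substituting $\sigma^{-1}(x)$ for $x$ in the just-proven isomorphism gives $\Lambda_E(-,x)\cong D\Lambda_E(\sigma^{-1}(x),-)$, so we may take $y=\sigma^{-1}(x)$. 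Applying the duality $D$ to $\Lambda_E(-,\sigma(x))\cong D\Lambda_E(x,-)$ and using that $D^2$ is the identity on finite-dimensional spaces yields $\Lambda_E(x,-)\cong D\Lambda_E(-,\sigma(x))$, so we may take $z=\sigma(x)$. Together these complete the proof that $\Lambda_E$ is a locally bounded Frobenius category; the only potential subtlety is keeping the variances of the Hom-functors and of $\sigma$ straight, but once the pairing is recognized as natural in both arguments this is routine.
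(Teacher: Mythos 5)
Your proposal is correct and follows essentially the same route as the paper: both extract the isomorphism $\Lambda_E(y,\sigma(x))\cong D\Lambda_E(x,y)$ from the non-degenerate pairing of Lemma \ref{non-degenerate}, observe naturality in $y$ via associativity, and substitute $\sigma^{-1}(x)$ to obtain the Frobenius property. You are in fact slightly more explicit than the paper in spelling out the second half of Definition \ref{Frobenius category} (the choice $z=\sigma(x)$ via dualizing), which the paper leaves implicit.
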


\begin{proof}
For each two objects $x$, $y$ of $\Lambda_E$, by Lemma \ref{non-degenerate}, the bilinear form $\langle-,-\rangle:\Lambda_{E}(y,\sigma(x))\times\Lambda_{E}(x,y)\rightarrow k$ is non-degenerate. Therefore it induces an isomorphism $\Lambda_{E}(y,\sigma(x))\cong D\Lambda_{E}(x,y)$. Moreover, by the definition of the bilinear form, this isomorphism is natural at $y$, so we have $\Lambda_{E}(-,\sigma(x))\cong D\Lambda_{E}(x,-)$. Replace $x$ by $\sigma^{-1}(x)$, we have $\Lambda_{E}(-,x)\cong D\Lambda_{E}(\sigma^{-1}(x),-)$. Therefore $\Lambda_E$ is a locally bounded Frobenius category.
\end{proof}

\section{Fractional Brauer configuration algebra}

\begin{Def}\label{f-BCA}
Let $E$ be an $f$-BC, and let $\Lambda_{E}$ be the corresponding f-BCC. Set $$A_{E}=(\bigoplus_{x,y\in (Q_E)_0}\Lambda_{E}(x,y))^{op}.$$ We call $A_E$ the fractional Brauer configuration algebra (abbr. f-BCA) of $E$. If moreover $E$ is an $f_s$-BC (resp. $f_s$-BG, $f_{ms}$-BC, $f_{ms}$-BG), then we call $A_E$ an $f_s$-BCA (resp. $f_s$-BGA, $f_{ms}$-BCA, $f_{ms}$-BGA).
\end{Def}

By definition, $A_E$ is a locally bounded algebra (that is, there is a complete set of pairwise orthogonal primitive idempotents $\{1_x\mid x\in (Q_E)_0\}$ such that $(A_E)1_x$ and $1_x(A_E)$ are finite-dimensional over $k$ for all $x\in (Q_E)_0$) since the category $\Lambda_E$ is locally bounded and $A_E$ is isomorphic to $(kQ_E/I_E)^{op}\cong kQ_{E}^{op}/I_{E}^{op}$. If moreover $E$ is a finite set, then $A_E$ is a finite-dimensional algebra, and the category of finitely generated left $A_E$-modules is equivalent to the category of finitely generated $\Lambda_E$-modules.

When $E$ is a finite $f_s$-BC, we may extend the linear forms $\epsilon:\Lambda_{E}(x,\sigma(x))\rightarrow k$ (the definition of $\epsilon$ is given before Lemma \ref{non-degenerate}) to a linear form of $A_E$, which is also denoted by $\epsilon$. Let $\langle-,-\rangle:A_E\times A_E\rightarrow k$ be the bilinear form defined by the composition $A_E\times A_E\xrightarrow[]{multi}A_E\xrightarrow[]{\epsilon} k$. For each $x$, $y\in (Q_E)_0$, the restriction of this bilinear form to $\Lambda_{E}(x,y)\times\Lambda_{E}(y,\sigma(x))$ is non-degenerate by Lemma \ref{non-degenerate} and the restriction of $\langle-,-\rangle:A_E\times A_E\rightarrow k$ to $\Lambda_{E}(x,y)\times\Lambda_{E}(y',x')$ is zero, whenever $y\neq y'$ or $x'\neq \sigma(x)$. Therefore the bilinear form $\langle-,-\rangle:A_E\times A_E\rightarrow k$ is non-degenerate with the property that $\langle a\cdot b,c\rangle=\langle a,b\cdot c\rangle$ for all $a,b,c\in A_E$, where $a\cdot b$ denotes the product of $a$ and $b$ in $A_E$. Then we have:

\begin{Prop}\label{fsBCA-is-Frobenius}
If $E$ is a finite $f_s$-BC, then $A_{E}$ is a finite-dimensional Frobenius algebra with the linear form $\epsilon:A_E\rightarrow k$ defined as above.
\end{Prop}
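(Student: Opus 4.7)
The plan is to reduce the Frobenius property to the non-degeneracy of the associative bilinear form $\langle -,-\rangle$ on $A_E$, most of which has already been set up in the paragraph preceding the proposition. First I would observe that since $E$ is finite, the quiver $Q_E$ has only finitely many vertices and $\Lambda_E$ is locally bounded by Theorem \ref{f-BC algebra is locally bounded}, so $A_E=\bigoplus_{x,y\in(Q_E)_0}\Lambda_E(x,y)$ is finite-dimensional. The standard characterization I will invoke is: a finite-dimensional $k$-algebra $A$ equipped with a linear form $\epsilon:A\to k$ whose associated bilinear form $(a,b)\mapsto\epsilon(ab)$ is non-degenerate (equivalently, $\ker\epsilon$ contains no nonzero one-sided ideal) is a Frobenius algebra. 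Thus the task reduces to checking non-degeneracy of $\langle -,-\rangle$ globally on $A_E$.

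For non-degeneracy I would proceed piecewise. Given $0\neq a\in A_E$, write $a=\sum_{x,y}a_{x,y}$ with $a_{x,y}\in\Lambda_E(x,y)$ and pick $(x_0,y_0)$ with $a_{x_0,y_0}\neq 0$. By Lemma \ref{non-degenerate} applied at $(x_0,y_0)$, there exists $b\in\Lambda_E(y_0,\sigma(x_0))$ with $\langle b,a_{x_0,y_0}\rangle\neq 0$. Now expand $\langle b,a\rangle=\sum_{x,y}\epsilon(b\cdot a_{x,y})$; since $b\in\Lambda_E(y_0,\sigma(x_0))$, the product $b\cdot a_{x,y}$ lies in $\Lambda_E(x,\sigma(x_0))$ and is nonzero only when $y=y_0$, and moreover $\epsilon$ vanishes on $\Lambda_E(x,\sigma(x_0))$ for $x\neq x_0$ by the very definition of $\epsilon$ (the basis elements of $\mathscr{C}$ live in hom-spaces of the form $\Lambda_E(x,\sigma(x))$ and $\epsilon$ was defined to vanish outside those). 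Hence $\langle b,a\rangle=\langle b,a_{x_0,y_0}\rangle\neq 0$, giving non-degeneracy on the left. The symmetric argument using the other half of Lemma \ref{non-degenerate} gives non-degeneracy on the right.

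Associativity $\langle ab,c\rangle=\langle a,bc\rangle$ is automatic from associativity of multiplication in $A_E$ since both sides equal $\epsilon(abc)$. Combining the associative property with the non-degeneracy established in the previous step yields that $\ker\epsilon$ contains no nonzero left ideal and no nonzero right ideal, so $(A_E,\epsilon)$ is a Frobenius algebra in the usual sense.

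The main obstacle is essentially absent here, because the heavy lifting — constructing the dual basis via the map $u\mapsto \prescript{\wedge}{}{u}$ and proving the pairing $\langle v,u\rangle\in\{0,1\}$ with $\langle v,u\rangle=1$ iff $v=\prescript{\wedge}{}{u}$ — has already been carried out in Lemmas \ref{inverse} and \ref{non-degenerate} under hypothesis $(f7)$. The only subtle point to handle carefully is the bookkeeping of the block decomposition of $A_E$: one must use both that $\epsilon$ is supported on the ``Nakayama-shifted diagonal'' blocks $\Lambda_E(x,\sigma(x))$ and that multiplication respects the source/target grading, in order to isolate the nonzero component $a_{x_0,y_0}$ when testing against $b$.
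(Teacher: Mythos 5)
Your proposal is correct and follows essentially the same route as the paper: the paper likewise extends $\epsilon$ to $A_E$, notes that the pairing restricted to $\Lambda_E(y,\sigma(x))\times\Lambda_E(x,y)$ is non-degenerate by Lemma \ref{non-degenerate} while all other blocks pair to zero, and concludes global non-degeneracy together with $\langle ab,c\rangle=\langle a,bc\rangle$. Your block-by-block verification is just a slightly more explicit write-up of the same argument.
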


\medskip
In Example \ref{example0}, $E$ is an $f_{ms}$-BG and $A_E$ is a finite-dimensional special biserial Frobenius algebra. The structures of indecomposable projective modules of $A_{E}$ are

$$P_1= \xymatrix@R=0.5pc@C=0.8pc  {
	&1\ar@{-}[dl]\ar@{-}[dr]& \\
    3\ar@{-}[dr]&&3\ar@{-}[dl] \\
    &2&
}, \quad \quad
P_2= \xymatrix@R=0.5pc@C=0.8pc  {
	&2\ar@{-}[dl]\ar@{-}[dr]& \\
    1\ar@{-}[dr]&&1\ar@{-}[dl] \\
    &3&
}, \quad \quad
P_3= \xymatrix@R=0.5pc@C=0.8pc  {
	&3\ar@{-}[dl]\ar@{-}[dr]& \\
    2\ar@{-}[dr]&&2\ar@{-}[dl] \\
    &1&
}.$$

In Example \ref{example1}, $E$ is an $f_s$-BG and $A_E$ is a finite-dimensional symmetric algebra but is not multiserial. The structures of indecomposable projective modules of $A_{E}$ are

$$P_1= \xymatrix@R=0.5pc@C=0.8pc  {
	&1\ar@{-}[dl]\ar@{-}[dr]& \\
    3\ar@{-}[dr]&&4\ar@{-}[dl] \\
    &2\ar@{-}[d]& \\
    &1&
}, \quad\quad
P_2= \xymatrix@R=0.5pc@C=0.8pc  {
	&2\ar@{-}[d]& \\
    &1\ar@{-}[dl]\ar@{-}[dr]& \\
    3\ar@{-}[dr]&&4\ar@{-}[dl] \\
    &2&
}, \quad\quad
P_3= \xymatrix@R=0.5pc@C=0.8pc  {
	3\ar@{-}[d] \\
    2\ar@{-}[d] \\
    1\ar@{-}[d] \\
    3
}, \quad\quad
P_4= \xymatrix@R=0.5pc@C=0.8pc  {
	4\ar@{-}[d] \\
    2\ar@{-}[d] \\
    1\ar@{-}[d] \\
    4
}.$$

In Example \ref{example2}, $E$ is an f-BC but not an $f_s$-BC (see remarks after Remark \ref{two-conditions-are equivalent}), and $A_E\cong k$ (see remarks after Remark \ref{interpretations-to-the-ideal-$I_E$}).

In Example \ref{example3}, $E$ is an f-BC but not an $f_s$-BC (see remarks after Remark \ref{two-conditions-are equivalent}). The structures of indecomposable projective modules of $A_{E}$ are

$$P_1= \xymatrix@R=0.5pc@C=0.8pc  {
	&1\ar@{-}[dl]\ar@{-}[dr]& \\
    5\ar@{-}[dr]&&6\ar@{-}[dl] \\
    &4\ar@{-}[d]& \\
    &2&
},
P_2= \xymatrix@R=0.5pc@C=0.8pc  {
	&2\ar@{-}[d]& \\
    &1\ar@{-}[dr]\ar@{-}[dl]& \\
    5\ar@{-}[dr]&&6\ar@{-}[dl] \\
    &4\ar@{-}[d]& \\
    &2&
},
P_3= \xymatrix@R=0.5pc@C=0.8pc  {
	3\ar@{-}[d] \\
    1\ar@{-}[d] \\
    5
},
P_4= \xymatrix@R=0.5pc@C=0.8pc  {
	&4\ar@{-}[dl]\ar@{-}[dr]& \\
    2\ar@{-}[dr]&&3\ar@{-}[dl] \\
    &1\ar@{-}[d]& \\
    &5&
},
P_5= \xymatrix@R=0.5pc@C=0.8pc {
	&5\ar@{-}[d]& \\
    &4\ar@{-}[dr]\ar@{-}[dl]& \\
    2\ar@{-}[dr]&&3\ar@{-}[dl] \\
    &1\ar@{-}[d]& \\
    &5&
},
P_6= \xymatrix@R=0.5pc@C=0.8pc {
	6\ar@{-}[d] \\
    4\ar@{-}[d] \\
    2
}.$$
Therefore $A_{E}$ is finite-dimensional but not self-injective.

In Example \ref{example4}, $E$ is an $f_{ms}$-BC and $A_E$ is a locally bounded special biserial Frobenius algebra. The structures of indecomposable projective modules of $A_{E}$ are

$$P_{3i}= \xymatrix@R=1pc@C=0pc  {
	&3i\ar@{-}[dl]\ar@{-}[dr]& \\
    3i-1\ar@{-}[dr]&&3i-2\ar@{-}[dl] \\
    &3i-3&
}, \quad\quad
P_{3i+1}= \xymatrix@R=1pc@C=0pc  {
	3i+1\ar@{-}[d] \\
    3i\ar@{-}[d] \\
    3i-2
}, \quad\quad
P_{3i+2}= \xymatrix@R=1pc@C=0pc  {
	3i+2\ar@{-}[d] \\
    3i\ar@{-}[d] \\
    3i-1
},
$$
where $i\in\mathbb{Z}$.

\medskip
Let $E$ be a finite $f_s$-BC. The automorphism $\sigma$ of $\Lambda_E$ induces an automorphism of $A_E$, which is also denoted by $\sigma$.
Note that the automorphism $\sigma$ preserves the bilinear form $\langle-,-\rangle$ on $A_E$, that is, $\langle\sigma(a),\sigma(b)\rangle=\langle a,b\rangle$ for all $a,b\in A_E$.

\begin{Lem}\label{epsilon}
If $E$ is a finite $f_s$-BC, then $\epsilon(b\cdot \sigma(a))=\epsilon(a\cdot b)$ for all $a,b\in A_E$.
\end{Lem}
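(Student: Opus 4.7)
The plan is to reduce to basis elements, then invoke Lemma~\ref{non-degenerate} together with a key identity relating $\sigma$ and the double complement $\prescript{\wedge}{}{(\prescript{\wedge}{}{(-)})}$.

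First, by bilinearity, it suffices to verify $\epsilon(\sigma(a)b) = \epsilon(ba)$ for basis elements $a \in \prescript{}{y}{\mathscr{C}}_x$ and $b \in \prescript{}{z}{\mathscr{C}}_{y'}$. Since $\epsilon$ vanishes on $\Lambda_E(u,v)$ unless $v = \sigma(u)$, both sides vanish unless $y' = y$ and $z = \sigma(x)$; one may therefore assume $a \in \prescript{}{y}{\mathscr{C}}_x$ and $b \in \prescript{}{\sigma(x)}{\mathscr{C}}_y$, so that $ba \in \Lambda_E(x,\sigma(x))$ and $\sigma(a)b \in \Lambda_E(y,\sigma(y))$.

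Next, the proof of Lemma~\ref{non-degenerate} gives the explicit evaluation $\epsilon(ba) = 1$ iff $b = \prescript{\wedge}{}{a}$, and $0$ otherwise. Applying the same lemma with the base objects $(y,\sigma(x))$ in place of $(x,y)$, for $u \in \prescript{}{\sigma(x)}{\mathscr{C}}_y$ and $v \in \prescript{}{\sigma(y)}{\mathscr{C}}_{\sigma(x)}$ one has $\epsilon(vu) = 1$ iff $v = \prescript{\wedge}{}{u}$. Taking $u = b$ and expanding $\sigma(a) \in \Lambda_E(\sigma(x),\sigma(y))$ in the basis $\prescript{}{\sigma(y)}{\mathscr{C}}_{\sigma(x)}$, one sees that $\epsilon(\sigma(a)b)$ is the coefficient of $\prescript{\wedge}{}{b}$ in the $\mathscr{C}$-expansion of $\sigma(a)$.

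The heart of the argument is therefore the identity $\sigma(a) = \prescript{\wedge}{}{(\prescript{\wedge}{}{a})}$ in $\Lambda_E$, which says in particular that $\sigma(a)$ coincides with a single basis element. To verify this, pick a standard sequence $p$ with $a = L(p)$ and first check the formal identity $\prescript{\wedge}{}{(\prescript{\wedge}{}{p})} = \sigma(p)$ of standard sequences by a case analysis according to whether $p$ is trivial, proper of length $0<n<d(e)$, or full. The computation uses $(f3)$ (so that $\sigma$ commutes with $g$) together with the three-case definitions of $\prescript{\wedge}{}{(-)}$. Taking $L(-)$ yields $L(\prescript{\wedge}{}{(\prescript{\wedge}{}{p})}) = L(\sigma(p)) = \sigma(L(p)) = \sigma(a)$; combining this with the well-definedness of $\prescript{\wedge}{}{(-)}$ on $\mathscr{C}$ (discussed before Lemma~\ref{inverse}) and with Lemma~\ref{the-condition-equivalent-to-(f7)} to transport the relation $R$ through each application of $\prescript{\wedge}{}{(-)}$, one descends to the asserted identity $\prescript{\wedge}{}{(\prescript{\wedge}{}{a})} = \sigma(a)$ in $\Lambda_E$. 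The main obstacle is the bookkeeping of $R$-representatives here; this is what forces the detour through standard sequences rather than a direct computation on $\mathscr{C}$.

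Combining these ingredients, $\epsilon(\sigma(a)b) = 1$ iff $\prescript{\wedge}{}{(\prescript{\wedge}{}{a})} = \prescript{\wedge}{}{b}$, and applying $(-)^{\wedge}$ (Lemma~\ref{inverse}) this is equivalent to $\prescript{\wedge}{}{a} = b$, which is precisely the criterion for $\epsilon(ba) = 1$. Hence $\epsilon(\sigma(a)b) = \epsilon(ba)$ on basis elements, and bilinearity extends the identity to all of $A_E$.
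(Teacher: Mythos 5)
Your proposal is correct and follows essentially the same route as the paper: reduce to basis elements of $\mathscr{C}$, use the evaluation formula $\epsilon(vu)=1$ iff $v=\prescript{\wedge}{}{u}$ from the proof of Lemma \ref{non-degenerate}, and conclude via the identity $\sigma(u)=\prescript{\wedge}{}{(\prescript{\wedge}{}{u})}$ together with the bijectivity of $\prescript{\wedge}{}{(-)}$ from Lemma \ref{inverse}. The only difference is that you spell out the verification of $\sigma(u)=\prescript{\wedge}{}{(\prescript{\wedge}{}{u})}$, which the paper asserts as straightforward.
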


\begin{proof}
Let $a=\sum_{x,y\in (Q_E)_0}\prescript{}{y}{a}_x$ and $b=\sum_{x,y\in (Q_E)_0}\prescript{}{y}{b}_x$, where $\prescript{}{y}{a}_x$, $\prescript{}{y}{b}_x \in\Lambda_{E}(x,y)$. Then $\epsilon(a\cdot b)=\sum_{x,y\in (Q_E)_0}\epsilon(\prescript{}{y}{a}_x\cdot\prescript{}{\sigma(x)}{b}_y)$ and
$$\epsilon(b\cdot\sigma(a))=\sum_{x,y\in (Q_E)_0}\epsilon(\prescript{}{\sigma(x)}{b}_y\cdot\prescript{}{\sigma(y)}{\sigma(a)}_{\sigma(x)})=\sum_{x,y\in (Q_E)_0}\epsilon(\prescript{}{\sigma(x)}{b}_y\cdot\sigma(\prescript{}{y}{a}_{x})).$$
To show $\epsilon(b\cdot\sigma(a))=\epsilon(a\cdot b)$, it suffices to show that $\epsilon(\prescript{}{y}{a}_{x}\cdot\prescript{}{\sigma(x)}{b}_{y})
=\epsilon(\prescript{}{\sigma(x)}{b}_{y}\cdot\sigma(\prescript{}{y}{a}_{x}))$ for each $x$, $y\in (Q_E)_0$. We may assume that $\prescript{}{y}{a}_{x}\in\prescript{}{y}{\mathscr{C}}_{x}$ and $\prescript{}{\sigma(x)}{b}_{y}\in\prescript{}{\sigma(x)}{\mathscr{C}}_{y}$.  For $u\in\prescript{}{y}{\mathscr{C}}_{x}$ and $v\in\prescript{}{\sigma(x)}{\mathscr{C}}_{y}$, by the proof of Lemma \ref{non-degenerate},
\begin{equation*}
\epsilon(u\cdot v)=\epsilon(vu)= \begin{cases}
1, & v=\prescript{\wedge}{}{u}; \\
0, & \text{otherwise}.
\end{cases}
\end{equation*}

Suppose that $u=L(p)$ for some standard sequence $p$, then $\prescript{\wedge}{}{u}RL(\prescript{\wedge}{}{p})$. By Lemma \ref{the-condition-equivalent-to-(f7)}, $\prescript{\wedge}{}{u}=L(r)$ for some standard sequence $r$ with $\prescript{\wedge}{}{r}\equiv\prescript{\wedge\wedge}{}{p}$. Then $\prescript{\wedge\wedge}{}{u}RL(\prescript{\wedge}{}{r})$, where $L(\prescript{\wedge}{}{r})=L(\prescript{\wedge\wedge}{}{p})=\sigma(u)$. Therefore $\sigma(u)=\prescript{\wedge\wedge}{}{u}$ in $\Lambda_E$. If $v=\prescript{\wedge}{}{u}$, then $\epsilon(v\cdot\sigma(u))=\epsilon(\prescript{\wedge\wedge}{}{u}\prescript{\wedge}{}{u})=1=\epsilon(u\cdot v)$. If $v\neq \prescript{\wedge}{}{u}$, by Lemma \ref{inverse}, $\prescript{\wedge}{}{v}\neq\prescript{\wedge\wedge}{}{u}$. Then $\epsilon(v\cdot\sigma(u))=\epsilon(\prescript{\wedge\wedge}{}{u}v)=0=\epsilon(u\cdot v)$. Therefore $\epsilon(\prescript{}{y}{a}_{x}\cdot\prescript{}{\sigma(x)}{b}_{y})
=\epsilon(\prescript{}{\sigma(x)}{b}_{y}\cdot\sigma(\prescript{}{y}{a}_{x}))$.
\end{proof}

\begin{Prop}
If $E$ is a finite $f_s$-BC, then the automorphism $\sigma$ of $A_E$ is equal to the usual Nakayama automorphism of the self-injective algebra $A_E$.
\end{Prop}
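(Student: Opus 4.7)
The proposal is a short deduction from the preceding Lemma (which states $\epsilon(\sigma(a)b) = \epsilon(ba)$ for all $a,b\in A_E$) combined with the standard characterization of the Nakayama automorphism. First, recall that for a finite-dimensional Frobenius $k$-algebra $A$ with associative nondegenerate bilinear form $\langle-,-\rangle_A \colon A\times A\to k$ given by $\langle a,b\rangle_A = \epsilon_A(ab)$, the Nakayama automorphism $\nu$ is the unique $k$-algebra automorphism of $A$ determined by the identity
\[
\epsilon_A(ab) \;=\; \epsilon_A(b\,\nu(a)) \qquad\text{for all } a,b\in A,
\]
equivalently $\langle a,b\rangle_A = \langle b,\nu(a)\rangle_A$. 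Uniqueness follows from the nondegeneracy of $\langle-,-\rangle_A$. We already know from Proposition \ref{fsBCA-is-Frobenius} that $A_E$ is Frobenius with linear form $\epsilon$ defined before Lemma \ref{non-degenerate}, and that the associated bilinear form $\langle -,-\rangle$ is nondegenerate.

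The plan is to rewrite the identity of Lemma \ref{epsilon} in the form above. Starting from $\epsilon(\sigma(a)b)=\epsilon(ba)$ valid for all $a,b\in A_E$, I substitute $a'=\sigma(a)$; since $\sigma$ is bijective this is an equivalent identity, and it becomes
\[
\epsilon(a'b) \;=\; \epsilon\bigl(b\,\sigma^{-1}(a')\bigr) \qquad\text{for all } a',b\in A_E.
\]
Because $\sigma^{-1}$ is a $k$-algebra automorphism of $A_E$ (being the inverse of the automorphism $\sigma$ produced in Theorem \ref{f-BC algebra is locally bounded}(3)), and because this identity is exactly the defining property of the Nakayama automorphism, the uniqueness statement above forces $\nu = \sigma^{-1}$, that is, $\sigma = \nu^{-1}$.

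There is essentially no genuine obstacle here beyond bookkeeping. The only point to be careful about is the convention for the Nakayama automorphism: depending on the source one either takes $\epsilon(ab)=\epsilon(b\nu(a))$ or $\epsilon(ab)=\epsilon(\nu(b)a)$, and this choice of convention is exactly what produces $\sigma=\nu^{-1}$ rather than $\sigma=\nu$ in the conclusion. I would therefore make the convention explicit at the start of the proof and then invoke Lemma \ref{epsilon} followed by the substitution above; the rest is immediate from the uniqueness of $\nu$.
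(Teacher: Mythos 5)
Your proposal is correct and follows essentially the same route as the paper: both invoke the identity $\epsilon(ab)=\epsilon(b\,\nu_A(a))$ defining the Nakayama automorphism, rewrite Lemma \ref{epsilon} via the substitution $a\mapsto\sigma^{-1}(a)$ to obtain $\epsilon(ab)=\epsilon(b\,\sigma^{-1}(a))$, and conclude $\sigma^{-1}=\nu_A$ from the non-degeneracy of $\langle-,-\rangle$. Your explicit remark about the convention for $\nu_A$ is a sensible addition but does not change the argument.
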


\begin{proof}
Let $\nu_{A}$ be the Nakayama automorphism of $A_E$. Then $\epsilon(a\cdot b)=\epsilon(b\cdot\nu_{A}(a))$ for all $a$, $b\in A_E$. By Lemma \ref{epsilon}, $\epsilon(b\cdot \sigma(a))=\epsilon(a\cdot b)$ for all $a$, $b\in A_E$. Since $\langle a,b \rangle=\epsilon(a\cdot b)$ and $\langle-,-\rangle:A_E\times A_E\rightarrow k$ is non-degenerate, $\sigma=\nu_{A}$.
\end{proof}

\begin{Prop}\label{symmetric-algebra}
If $E$ is a finite $f_s$-BC with integral f-degree (for example, if $E$ is a finite $f_{ms}$-BC with integral f-degree or if $E$ is a finite $f_{s}$-BG with integral f-degree), then $A_E$ is a finite-dimensional symmetric algebra.
\end{Prop}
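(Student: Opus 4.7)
The plan is to use the previous proposition identifying the Nakayama automorphism $\nu_A$ of the Frobenius algebra $A_E$ with $\sigma^{-1}$, where $\sigma$ is the automorphism of $A_E$ induced by the Nakayama automorphism $\sigma:E\to E$, $e\mapsto g^{d(e)}\cdot e$. A Frobenius algebra is symmetric exactly when its Nakayama automorphism is inner; it therefore suffices to show that under the integral f-degree hypothesis, $\sigma$ is the identity on $A_E$.

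First I would unpack the integral f-degree assumption at the level of angles. Every vertex of $E$ is a (necessarily finite, since $E$ is finite) $G$-orbit $v=G\cdot e$, and by Remark \ref{remarks-on-the definition-of-f-BC}(6), $d_f(v)=d(e)/|G\cdot e|$ is a positive integer $n_v$. Consequently $d(e)=n_v\,|G\cdot e|$, so $g^{d(e)}$ lies in the stabilizer of $e$ in $G$, giving $\sigma(e)=g^{d(e)}\cdot e=e$ for every $e\in E$.

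Next I would transport this to $A_E$. The map $\sigma:E\to E$ induces a permutation of the polygons $P(e)$ (the vertices of $Q_E$) and of the classes $L(e)$ (the arrows of $Q_E$); by Theorem \ref{f-BC algebra is locally bounded}(3), this lifts to an automorphism of $\Lambda_E$ and hence of $A_E$. Since $\sigma=\mathrm{id}_E$, both the induced permutations of $(Q_E)_0$ and $(Q_E)_1$ are trivial, so $\sigma=\mathrm{id}_{A_E}$.

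Finally, combining this with the previous Proposition, the usual Nakayama automorphism satisfies $\nu_A=\sigma^{-1}=\mathrm{id}_{A_E}$. Therefore the linear form $\epsilon:A_E\to k$ of Proposition \ref{fsBCA-is-Frobenius} satisfies $\epsilon(ab)=\epsilon(b\nu_A(a))=\epsilon(ba)$ for all $a,b\in A_E$, so the associated non-degenerate bilinear form $\langle-,-\rangle$ is symmetric, which is precisely the condition for $A_E$ to be a symmetric algebra. The whole argument is essentially a book-keeping step; I do not anticipate any real obstacle beyond correctly translating the integral f-degree condition into the statement $\sigma=\mathrm{id}_E$.
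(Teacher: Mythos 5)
Your proposal is correct and follows essentially the same route as the paper: integral f-degree forces $\sigma=\mathrm{id}_E$ (since $d(e)$ is a multiple of $|G\cdot e|$), hence the induced automorphism of $A_E$ is the identity, and then Lemma \ref{epsilon} (equivalently, your detour through $\nu_A=\sigma^{-1}$) shows the form $\epsilon$ is symmetric. The paper's proof is just a terser version of the same argument.
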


\begin{proof}
Since $E$ has integral f-degree, the automorphism $\sigma$ of $A_E$ is identity. By Lemma \ref{epsilon}, the linear form $\epsilon$ of $A_E$ is symmetric.
\end{proof}

\begin{Rem1} \label{counter-examples}
\begin{itemize}
  \item[(1)] There exist self-injective algebras which are not Morita equivalent to $f_s$-BCA. For example, if $k$ is an algebraically closed field, then every basic indecomposable nonstandard representation-finite self-injective algebra is not isomorphic to $f_s$-BCA (see Theorem \ref{RFS-algebra=r.f.fsBCA}). Another example which is not Morita equivalent to $f_s$-BCA is the algebra $k\langle x,y\rangle/\langle x^2,y^2,xy-\lambda yx\rangle$, where $\lambda\neq 1$.
  \item[(2)] By Corollary \ref{$f_{ms}$-BCA-of-integral-f-degree-is-equal-to-BCA}, an $f_{ms}$-BCA with integral f-degree is equal to a BCA at least over an algebraically closed field. However, the $f_{s}$-BGAs with integral f-degree beyond the scope of BCAs, they are also symmetric but not special biserial in general and have been studied in \cite{X}.
\end{itemize}

\end{Rem1}

\section{The Gabriel quiver and admissible relations of a fractional Brauer configuration category in type S}

Let $\Lambda$ be a locally bounded $k$-category. According to \cite[Section 2.1]{BG}, $\Lambda$ is isomorphic to the form $kQ/I$, where $Q$ is a locally finite quiver and $I$ is an admissible ideal of path category $kQ$, that is, for each $x\in Q_0$, there exists a positive integer $N_x$ with $kQ^{\geq N_x}(x,-)\subseteq I(x,-)\subseteq kQ^{\geq 2}(x,-)$ and $kQ^{\geq N_x}(-,x)\subseteq I(-,x)\subseteq kQ^{\geq 2}(-,x)$, where $kQ^{\geq n}$ denotes the ideal of $kQ$ generated by paths of length $\geq n$. Moreover, such quiver $Q$ is uniquely determined by $\Lambda$, which is called the Gabriel quiver of $\Lambda$.

By \cite[Section 2.1]{BG}, the Gabriel quiver $Q$ and the admissible ideal $I$ of $\Lambda$ can be constructed as follows. The vertices of $Q$ are the objects of $\Lambda$. Denote $\mathrm{rad}_{\Lambda}$ the radical of the $k$-category $\Lambda$. For every objects $x,y$ of $\Lambda$, choose morphisms $f_1,\cdots,f_m$ in $\mathrm{rad}_{\Lambda}(x,y)$ such that the images of $f_1,\cdots,f_m$ in $\mathrm{rad}_{\Lambda}(x,y)/\mathrm{rad}^{2}_{\Lambda}(x,y)$ form a $k$-basis of it. Then there are $m$ arrows $\alpha_1,\cdots,\alpha_m$ from $x$ to $y$ in $Q$. Moreover, the functor $\rho:kQ\rightarrow\Lambda$ sending each vertex of $Q$ to the associated object in $\Lambda$ and sending each arrow $\alpha_i$ to the morphism $f_i$ is full, which induces an isomorphism $kQ/I\rightarrow\Lambda$, where $I=\ker\rho$ is the admissible ideal.

In this section, we will determine the Gabriel quiver and admissible relations of $\Lambda_E=kQ_E/I_E$, where $E$ is an $f_s$-BC. As the following lemma shows that, the Gabriel quiver and the admissible ideal are obtained from a reduction procedure from the quiver $Q_E$ and the ideal $I_E$.

Let $E=(E,P,L,d)$ be an $f_s$-BC. Let $\mathscr{E}$ be the set of paths of the quiver $Q_E$ with an equivalence relation $R$ defined in Definition \ref{relation R}. Note that each arrow $\alpha$ of $Q_E$ from $x$ to $y$ belongs to $\prescript{}{y}{\mathscr{E}}_x$.

\begin{Def} \label{reduced-arrow}
\begin{itemize}
  \item[(1)] We call an arrow $\alpha$ of $Q_E$ from $x$ to $y$ reduced, if there exists a path $p\in\prescript{}{y}{\mathscr{E}}_x$ of length $\geq 2$ such that $\alpha R p$.
  \item[(2)] Denote $\prescript{}{y}{\mathscr{N}}_x$ a complete set of representatives of non-reduced arrows of $Q_E$ from $x$ to $y$ under the equivalence relation $R$, and define a subquiver $Q'_E$ of $Q_E$: $$(Q'_E)_0=(Q_E)_0, \quad (Q'_E)_1=\bigsqcup_{x,y\in (Q_E)_0}\prescript{}{y}{\mathscr{N}}_x.$$
  \item[(3)] Denote $\rho:kQ'_E\rightarrow \Lambda_{E}=kQ_E/I_E$ the natural $k$-linear functor, and let $I'_E:=\mathrm{ker}\rho$.
      \end{itemize}
\end{Def}

Note that by the property (5) in Definition \ref{BC}, if $E$ is a BC (viewed as an f-BC) and $e$ is an angle such that the corresponding vertex $G\cdot e$ is truncated, then the arrow $L(e)$ of $Q_E$ is reduced.

\begin{Lem}\label{Gabriel quiver}
$Q'_E$ is the Gabriel quiver of $\Lambda_E$, the functor $\rho: kQ'_E\rightarrow \Lambda_{E}$ is dense and full, and the kernel $I'_E$ of $\rho $ is an admissible ideal of $kQ'_E$. In particular, the $f_s$-BC category $\Lambda_{E}$ is isomorphic to the category $kQ'_E/I'_E$.
\end{Lem}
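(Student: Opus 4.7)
The strategy is to match the non-reduced arrows against the Bongartz--Gabriel recipe for the Gabriel quiver recalled at the start of this section, with the set $\prescript{}{y}{\mathscr{N}}_x$ playing the role of a distinguished basis of $\mathrm{rad}_{\Lambda_E}(x,y)/\mathrm{rad}^{2}_{\Lambda_E}(x,y)$. Density of $\rho$ is immediate, since $(Q'_E)_0=(Q_E)_0$ indexes the objects of $\Lambda_E$ by definition. By Theorem~\ref{f-BC algebra is locally bounded}(2) the radical of $\Lambda_E$ coincides with the arrow ideal $J$, so $\mathrm{rad}_{\Lambda_E}(x,y)$ is spanned by the images of paths in $\prescript{}{y}{\mathscr{E}}_x$ of length $\geq 1$, and $\mathrm{rad}^{2}_{\Lambda_E}(x,y)$ by the images of paths of length $\geq 2$.

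Next, I would exploit the freedom in choosing the $R$-representatives $\prescript{}{y}{\mathscr{C}}_x$ of Proposition~\ref{a basis of J}. The key observation is that the $R$-class of any non-reduced arrow contains no path of length $\geq 2$ (by the very definition of ``non-reduced'') and, moreover, contains no reduced arrow either (if $\alpha$ is non-reduced, $\alpha R \beta$, and $\beta R p$ for some $p$ of length $\geq 2$, then $\alpha R p$ by Lemma~\ref{equivalence relation}, contradicting non-reducedness). Consequently one can choose $\prescript{}{y}{\mathscr{C}}_x$ so that its length-$1$ members are precisely the elements of $\prescript{}{y}{\mathscr{N}}_x$, while every other class of arrows is represented by some path of length $\geq 2$. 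With such a choice Proposition~\ref{a basis of J} expresses $\mathrm{rad}_{\Lambda_E}(x,y)$ as the direct sum of the span of $\prescript{}{y}{\mathscr{N}}_x$ and the span of the length-$\geq 2$ representatives, i.e.\ $\mathrm{rad}^{2}_{\Lambda_E}(x,y)$; therefore the image of $\prescript{}{y}{\mathscr{N}}_x$ is a $k$-basis of $\mathrm{rad}_{\Lambda_E}(x,y)/\mathrm{rad}^{2}_{\Lambda_E}(x,y)$.

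Finally, since the lifts to $\mathrm{rad}_{\Lambda_E}(x,y)$ of these basis elements are the non-reduced arrows themselves, the Bongartz--Gabriel construction recalled above produces a quiver whose arrows from $x$ to $y$ are exactly $\prescript{}{y}{\mathscr{N}}_x$, hence equal to $(Q'_E)_1$, together with a full functor onto $\Lambda_E$ whose kernel is admissible; by uniqueness of the Gabriel quiver this functor must coincide with $\rho$, proving all three claims at once. The main obstacle is the basis-of-$\mathrm{rad}/\mathrm{rad}^{2}$ step of the preceding paragraph; this is precisely where the type-S hypothesis enters, through Proposition~\ref{a basis of J} (whose proof invokes Lemma~\ref{equivalence relation} and ultimately condition $(f7)$). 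Everything else is a direct translation between the combinatorics of $R$-classes of paths in $\mathscr{E}$ and the standard construction of the Gabriel quiver of a locally bounded category.
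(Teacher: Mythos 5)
Your proposal is correct and follows essentially the same route as the paper: both arguments reduce the claim to showing that the images of the non-reduced arrows $\prescript{}{y}{\mathscr{N}}_x$ form a $k$-basis of $\mathrm{rad}_{\Lambda_E}(x,y)/\mathrm{rad}^{2}_{\Lambda_E}(x,y)$, using Proposition~\ref{a basis of J} together with the observation that the $R$-class of a non-reduced arrow contains no path of length $\geq 2$ (the paper verifies generation and linear independence separately via Lemma~\ref{a basis of ideal I E}, while you package the same facts as a direct-sum decomposition after a suitable choice of representatives in $\mathscr{C}$, which is only a cosmetic difference). Your justification that a non-reduced arrow cannot be $R$-equivalent to a reduced one, via transitivity of $R$ from Lemma~\ref{equivalence relation}, is exactly the point the paper uses implicitly when it assumes $\prescript{}{y}{\mathscr{N}}_x\subseteq\prescript{}{y}{\mathscr{D}}_x$ and notes that the remaining representatives lie in $J^2$.
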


\begin{proof}
The fact that $\rho$ is dense is clear since $Q'_E$ and $Q_E$ have the same vertices. To show that $\rho$ is full, it suffices to show that for each $x,y\in (Q_E)_0$, the image of $\prescript{}{y}{\mathscr{N}}_x$ in $J(x,y)/J^2(x,y)$ forms a $k$-basis of $J(x,y)/J^2(x,y)$, where $J$ is the ideal of $\Lambda_{E}$ generated by the arrows of $Q_E$ ($J$ is also the radical of $\Lambda_{E}$ by Theorem \ref{f-BC algebra is locally bounded}).

By Proposition \ref{a basis of J}, the set of paths $$
\prescript{}{y}{\mathscr{D}}_x=\begin{cases}
\prescript{}{y}{\mathscr{C}}_x-\{1_x\}, \text{ if } x=y; \\
\prescript{}{y}{\mathscr{C}}_x, \text{ otherwise}
\end{cases}
$$
forms a $k$-basis of $J(x,y)$, where $\mathscr{C}$ is a set of representatives of paths in $\mathscr{E}$ under equivalence relation $R$. We may assume that $\prescript{}{y}{\mathscr{N}}_x$ is contained in $\prescript{}{y}{\mathscr{D}}_x$. For each $p\in\prescript{}{y}{\mathscr{D}}_x-\prescript{}{y}{\mathscr{N}}_x$, either $p$ is a reduced arrow of $Q_E$ or $p$ is a path of length $\geq 2$, therefore $p\in J^2(x,y)$. So the image of $\prescript{}{y}{\mathscr{N}}_x$ in $J(x,y)/J^2(x,y)$ generates the whole space. Suppose that $\sum_{i=1}^{n}\lambda_i\alpha_i\in J^2(x,y)$, where $\lambda_i\in k$ and $\alpha_i\in\prescript{}{y}{\mathscr{N}}_x$ for each $1\leq i\leq n$, then we may assume that $\sum_{i=1}^{n}\lambda_i\alpha_i=\sum_{j=1}^{m}\mu_j p_j$ in $\Lambda_E$, where $\mu_j\in k$ and $p_j$ is a path of $Q_E$ of length $\geq 2$ for each $1\leq j\leq m$. According to Lemma \ref{a basis of ideal I E}, we may assume that each $p_j$ belongs to $\prescript{}{y}{\mathscr{E}}_x$. For each $1\leq j\leq m$, let $q_j$ be the path of $\prescript{}{y}{\mathscr{D}}_x$ such that $p_j R q_j$. Since $l(p_j)\geq 2$, $q_j\notin\prescript{}{y}{\mathscr{N}}_x$. Since $\prescript{}{y}{\mathscr{D}}_x$ forms a $k$-basis of $J(x,y)$, we have $\sum_{i=1}^{n}\lambda_i\alpha_i=\sum_{j=1}^{m}\mu_j q_j=0$ in $\Lambda_E$. Then $\lambda_i=0$ for each $1\leq i\leq n$ and the image of $\prescript{}{y}{\mathscr{N}}_x$ in $J(x,y)/J^2(x,y)$ is linearly independent.
\end{proof}

For each set $\mathscr{A}$ of paths of $Q_E$, denote $\mathscr{A}'$ the subset of $\mathscr{A}$ formed by the paths in $\mathscr{A}$ which are also paths of $Q'_E$. Then for every vertices $x$, $y$ of $Q'_E$, $\prescript{}{y}{(\mathscr{B}'_1)}_x\sqcup\prescript{}{y}{(\mathscr{B}'_2)}_x\sqcup\prescript{}{y}{\mathscr{E}'}_x$ forms a $k$-basis of $kQ'_E(x,y)$ (cf. Remark \ref{disjoint-union}). Similar to Lemma \ref{a basis of ideal I E}, we have

\begin{Lem}\label{a basis of ideal I' E}
Let $E=(E,P,L,d)$ be an $f_s$-BC. For every vertices $x$, $y$ of $Q'_E$, $I'_E(x,y)=k\prescript{}{y}{(\mathscr{B}'_1)}_x\bigoplus k\prescript{}{y}{(\mathscr{B}'_2)}_x\bigoplus k\{u-v\mid u,v\in\prescript{}{y}{\mathscr{E}'}_x$ and $uRv\}$.
\end{Lem}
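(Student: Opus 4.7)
The plan is to identify $I'_E(x,y)$ with the intersection $I_E(x,y)\cap kQ'_E(x,y)$ inside $kQ_E(x,y)$ (via the inclusion $kQ'_E\hookrightarrow kQ_E$ through which $\rho$ factors), and then transport the description from Lemma \ref{a basis of ideal I E} across this identification by exploiting disjointness in the basis.

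For the inclusion ``$\supseteq$'', everything is routine: paths in $\prescript{}{y}{(\mathscr{B}'_1)}_x$ and $\prescript{}{y}{(\mathscr{B}'_2)}_x$ are defining relations of types $(fR2)$ and $(fR3)$ that already lie in $kQ'_E$, while for $u,v\in\prescript{}{y}{\mathscr{E}'}_x$ with $uRv$, the element $u-v$ is either zero (when the underlying standard sequences are trivial) or a type $(fR1)$ relation belonging to $kQ'_E$. Directness of the three summands follows from the fact that $\prescript{}{y}{(\mathscr{B}'_1)}_x$, $\prescript{}{y}{(\mathscr{B}'_2)}_x$, $\prescript{}{y}{\mathscr{E}'}_x$ are disjoint subsets of the basis of $kQ'_E(x,y)$.

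For ``$\subseteq$'', take $\eta\in I'_E(x,y)\subseteq I_E(x,y)$ and apply Lemma \ref{a basis of ideal I E} to obtain a unique decomposition $\eta=\eta_1+\eta_2+\eta_3$ with $\eta_1\in k\prescript{}{y}{(\mathscr{B}_1)}_x$, $\eta_2\in k\prescript{}{y}{(\mathscr{B}_2)}_x$, and $\eta_3$ in the span of differences $u-v$ for $u,v\in\prescript{}{y}{\mathscr{E}}_x$ with $uRv$. Since $\prescript{}{y}{\mathscr{B}_1}_x$, $\prescript{}{y}{\mathscr{B}_2}_x$, $\prescript{}{y}{\mathscr{E}}_x$ partition the basis of $kQ_E(x,y)$ by Remark \ref{disjoint-union}, the hypothesis that $\eta$ is supported on paths of $Q'_E$ forces each $\eta_i$ to have the same property separately; this immediately gives $\eta_1\in k\prescript{}{y}{(\mathscr{B}'_1)}_x$ and $\eta_2\in k\prescript{}{y}{(\mathscr{B}'_2)}_x$.

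The only substantive step is controlling $\eta_3$, and this is where I expect the main (though still routine) bookkeeping. By Lemma \ref{equivalence relation}, $R$ partitions $\prescript{}{y}{\mathscr{E}}_x$ into equivalence classes $\{\mathscr{E}_i\}$, and every generator $u-v$ (with $uRv$) lies entirely in a single $k\mathscr{E}_i$. Splitting $\eta_3=\sum_i\eta_{3,i}$ accordingly, each $\eta_{3,i}\in k\mathscr{E}_i$ has coefficient-sum zero, and the support constraint forces $\eta_{3,i}\in k(\mathscr{E}_i\cap\prescript{}{y}{\mathscr{E}'}_x)$. If this intersection has at most one element, then the zero-sum condition forces $\eta_{3,i}=0$; otherwise $\eta_{3,i}$ can be rewritten as a linear combination of differences $u-v$ with $u,v\in\mathscr{E}_i\cap\prescript{}{y}{\mathscr{E}'}_x$. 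Summing over $i$ produces the desired expression for $\eta_3$. The essential point is that the $R$-equivalence structure respects the passage from $Q_E$ to $Q'_E$, so no new generating relations appear beyond those already listed.
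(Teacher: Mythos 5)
Your proof is correct and takes essentially the same route as the paper: both identify $I'_E(x,y)$ with $I_E(x,y)\cap kQ'_E(x,y)$, invoke Lemma \ref{a basis of ideal I E} together with the disjointness of $\mathscr{B}_1$, $\mathscr{B}_2$, $\mathscr{E}$ to split off the $(fR2)$/$(fR3)$ parts, and use the vanishing of the coefficient sum on each $R$-class (restricted to $\mathscr{E}'$) to rewrite the remaining part as a combination of differences $u-v$ with $u,v\in\prescript{}{y}{\mathscr{E}'}_x$ and $uRv$. No gaps; your grouping by $R$-equivalence classes is just a repackaging of the paper's coefficient computation.
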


\begin{proof}
By Lemma \ref{a basis of ideal I E} we have $I_E(x,y)=k\prescript{}{y}{(\mathscr{B}_1)}_x\bigoplus k\prescript{}{y}{(\mathscr{B}_2)}_x\bigoplus k\{u-v\mid u,v\in\prescript{}{y}{\mathscr{E}}_x$ and $uRv\}$. Since $I'_E$ is the kernel of the $k$-linear functor $\rho:kQ'_E\rightarrow \Lambda_{E}=kQ_E/I_E$, the subspace $k\prescript{}{y}{(\mathscr{B}'_1)}_x\bigoplus k\prescript{}{y}{(\mathscr{B}'_2)}_x\bigoplus k\{u-v\mid u,v\in\prescript{}{y}{\mathscr{E}'}_x$ and $uRv\}$ of $kQ'_E(x,y)$ is contained in $I'_E(x,y)$.

Conversely, for each $r\in I'_E(x,y)$, we may write $$r=\sum_{p\in\prescript{}{y}{(\mathscr{B}'_1)}_x\sqcup\prescript{}{y}{(\mathscr{B}'_2)}_x}\lambda_p p+\sum_{q\in\prescript{}{y}{\mathscr{E}'}_x}\mu_q q,$$ where $\lambda_p,\mu_q\in k$. Then $$\sum_{p\in\prescript{}{y}{(\mathscr{B}'_1)}_x\sqcup\prescript{}{y}{(\mathscr{B}'_2)}_x}\lambda_p p+\sum_{q\in\prescript{}{y}{\mathscr{E}'}_x}\mu_q q\in I_E(x,y).$$ Since each $p\in\prescript{}{y}{(\mathscr{B}'_1)}_x\sqcup\prescript{}{y}{(\mathscr{B}'_2)}_x$ belongs to $I_E(x,y)$, $\sum_{q\in\prescript{}{y}{\mathscr{E}'}_x}\mu_q q\in I_E(x,y)$. By Lemma \ref{a basis of ideal I E} we imply that $\sum_{q\in\prescript{}{y}{\mathscr{E}'}_x}\mu_q q$ belongs to the subspace $k\{u-v\mid u,v\in\prescript{}{y}{\mathscr{E}}_x$ and $uRv\}$ of $I_E(x,y)$. Therefore for each $q\in\prescript{}{y}{\mathscr{E}'}_x$, $$\sum_{l\in\prescript{}{y}{\mathscr{E}'}_x,lRq}\mu_l=0.$$ So $\sum_{q\in\prescript{}{y}{\mathscr{E}'}_x}\mu_q q\in k\{u-v\mid u,v\in\prescript{}{y}{\mathscr{E}'}_x$ and $uRv\}$ and $r$ belongs to the subspace \\ $k\prescript{}{y}{(\mathscr{B}'_1)}_x\bigoplus k\prescript{}{y}{(\mathscr{B}'_2)}_x\bigoplus k\{u-v\mid u,v\in\prescript{}{y}{\mathscr{E}'}_x$ and $uRv\}$ of $kQ'_E(x,y)$.
\end{proof}

\begin{Rem1}{\rm(Compare to Remark \ref{two-quivers-with-relations-are-identical})}\label{original-quiver-with-relations-of-BCA}
If $E$ is a Brauer configuration, then the quiver $Q'_E$ and the ideal $I'_E$ are just the quiver and the ideal given in \cite[Section 2]{GS}.
\end{Rem1}

Note that Examples \ref{example0}, \ref{example1} and \ref{example4} are of type S. In Examples \ref{example0} and \ref{example4}, $Q'_E$ is the same as $Q_{E}$ respectively, and in Example \ref{example1}, $Q'_E$ is obtained from $Q_{E}$ by removing the loops.

\begin{Def}{\rm(cf. \cite{GS2})}
We call a locally bounded category $\Lambda$ special multiserial if $\Lambda\cong kQ/I$ for some locally finite quiver $Q$ and some admissible ideal $I$, such that for each arrow $\alpha$ of $Q$, there exists at most one arrow $\beta$ (resp. $\gamma$) of $Q$ such that $\beta\alpha\notin I$ (resp. $\alpha\gamma\notin I$).
\end{Def}

\begin{Prop}\label{special multiserial}
If $E$ is an $f_{ms}$-BC, then $\Lambda_E$ is a locally bounded special multiserial Frobenius category. In particular, if $E$ is an $f_{ms}$-BG, then $\Lambda_E$ is a locally bounded special biserial Frobenius category.
\end{Prop}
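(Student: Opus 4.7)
The plan is to reduce the claim to the two already-stated core results and then verify the special multiserial condition by a direct path analysis enabled by the triviality of $L$. First, I note that a $f_{ms}$-BC $E$ is automatically a $f_s$-BC (as remarked just after Definition~\ref{f-BC-type-MS}), so Theorems~\ref{f-BC algebra is locally bounded} and~\ref{isomorphism of proj and inj} already give that $\Lambda_E$ is a locally bounded Frobenius category. By Lemma~\ref{Gabriel quiver}, I may present $\Lambda_E \cong kQ'_E/I'_E$ with $Q'_E$ a locally finite quiver and $I'_E$ an admissible ideal, so only the ``at most one outgoing/incoming composable arrow'' condition on $(Q'_E, I'_E)$ remains.

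Fix an arrow $\alpha = L(e)$ of $Q'_E$; because $L$ is trivial, every arrow of $Q_E$ has this shape with $L(e)=\{e\}$ and distinct angles give distinct arrows. For any arrow $\beta = L(h)$ of $Q'_E$, the condition $\beta\alpha \notin I'_E$ is equivalent to $L(h)L(e) \neq 0$ in $\Lambda_E$, using the factorization $kQ'_E \hookrightarrow kQ_E \twoheadrightarrow \Lambda_E = kQ_E/I_E$ whose composite is $\rho$ and whose kernel is $I'_E$ (Definition~\ref{reduced-arrow}). By Remark~\ref{disjoint-union}, the length-$2$ path $L(h)L(e)$ lies in exactly one of $\mathscr{B}_1$, $\mathscr{B}_2$, or $\mathscr{E}$, and by Lemma~\ref{a basis of ideal I E} together with Remark~\ref{full-squences-are-nonzero} it is nonzero in $\Lambda_E$ precisely when it lies in $\mathscr{E}$. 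Applying Lemma~\ref{interpretation-of-(fR1)} with $L$ trivial, membership in $\mathscr{E}$ forces $g\cdot e \in L(h)=\{h\}$, i.e.\ $h = g\cdot e$, and avoidance of $\mathscr{B}_2$ forces $2 \leq d(e)$. Hence the unique candidate is $\beta = L(g\cdot e)$ (which may or may not survive in the Gabriel quiver $Q'_E$), proving that at most one such $\beta$ exists. The symmetric argument yields at most one $\gamma = L(g^{-1}\cdot e)$ of $Q'_E$ with $\alpha\gamma \notin I'_E$, establishing that $\Lambda_E$ is special multiserial.

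For the biserial addendum, if $E$ is a $f_{ms}$-BG then $|P(e)| = 2$ for every $e \in E$. The arrows of $Q_E$ starting at a vertex $v = P(e)$ are precisely $\{L(h)\mid h \in P(e)\}$, giving exactly two outgoing arrows, and similarly at most two arrows end at $v$ (they come from those $h$ with $g\cdot h \in P(e)$). Passing to the sub-quiver $Q'_E$ only removes arrows, so each vertex of $Q'_E$ has at most two incoming and two outgoing arrows, which combined with the special multiserial condition already proved gives the special biserial conclusion.

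The mildest technical point is the identification $\beta\alpha \notin I'_E \Leftrightarrow L(h)L(e) \notin I_E$, but this is immediate from the construction of $\rho$ and $I'_E$ in Definition~\ref{reduced-arrow}; no case analysis of whether $L(g\cdot e)$ is itself reduced is required, since the multiserial condition only demands ``at most one'', not existence. Thus the whole argument is a short consequence of the length-$2$ path classification in type S, made rigid by the triviality of $L$ in type MS.
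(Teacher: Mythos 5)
Your proposal is correct and follows essentially the same route as the paper: reduce the locally bounded Frobenius part to Theorems \ref{f-BC algebra is locally bounded} and \ref{isomorphism of proj and inj}, present $\Lambda_E$ as $kQ'_E/I'_E$ via Lemma \ref{Gabriel quiver}, and use the triviality of $L$ to show the only composable arrow after $L(e)$ (resp.\ before) can be $L(g\cdot e)$ (resp.\ $L(g^{-1}\cdot e)$). The only cosmetic difference is that you argue through $I_E$ and the trichotomy $\mathscr{B}_1\sqcup\mathscr{B}_2\sqcup\mathscr{E}$ while the paper invokes Lemma \ref{a basis of ideal I' E} directly for $I'_E$, and you spell out the biserial addendum that the paper leaves implicit.
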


\begin{proof}
By Lemma \ref{Gabriel quiver}, $\Lambda_E\cong kQ'_E/I'_E$, where $Q'_E$ is a locally finite quiver and $I'_E$ is an admissible ideal of $kQ'_E$. Let $\alpha=L(e)$ be an arrow of $Q'_E$. If $\beta\alpha\notin I'_E$ for some arrow $\beta$ of $Q'_E$, by Lemma \ref{a basis of ideal I' E} we have $\beta\alpha\notin\mathscr{B}'_1$, so $\beta\alpha=L(g\cdot h)L(h)$ for some $h\in E$. Since the partition $L$ of $E$ is trivial, $e=h$ and $\beta=L(g\cdot e)$. Similarly, if $\alpha\gamma\notin I'_E$ for some arrow $\gamma$ of $Q'_E$, then $\gamma=L(g^{-1}\cdot e)$. So there exists at most one arrow $\beta$ (resp. $\gamma$) of $Q'_E$ such that $\beta\alpha\notin I'_E$ (resp. $\alpha\gamma\notin I'_E$), and $\Lambda_E$ is special multiserial. The fact that $\Lambda_E$ is Frobenius follows from Theorem \ref{isomorphism of proj and inj}.
\end{proof}

\begin{Cor}\label{$f_{ms}$-BCA-of-integral-f-degree-is-equal-to-BCA}
Let $k$ be an algebraically closed field. If $E$ is a finite $f_{ms}$-BC with integral f-degree, then $A_E$ is a BCA. Conversely, each BCA is isomorphic to an f-BCA $A_E$, where $E$ is a finite $f_{ms}$-BC with integral f-degree.
\end{Cor}

\begin{proof} If $E$ is a finite $f_{ms}$-BC with integral f-degree, then by Propositions \ref{special multiserial} and \ref{symmetric-algebra}, $A_E$ is a finite-dimensional special multiserial symmetric algebra. Now by the result of \cite{GS2}, over an algebraically closed field, the class of symmetric special multiserial algebras coincides with the class of BCAs. Conversely, if $A$ be the BCA associated to the BC $\Gamma$, then we can consider $\Gamma$ as a finite $f_{ms}$-BC $E=(E,P,L,d)$ with integral f-degree. Let $E'$ be the $G$-set defined as follows: $E'=E$ as sets; the action of $G=\langle g\rangle$ on $E'$ is given by $g^i(e):=g^{-i}\cdot e$ for each $e\in E'$, where $g^i\cdot e$ denotes the action of $g^i$ on $e$ given by the $G$-set structure of $E$. Then $E'=(E',P,L,d)$ is also a finite $f_{ms}$-BC with integral f-degree, and $A\cong A_{E'}$.
\end{proof}

One reason for us to define f-BCAs is that BCAs are not closed under derived equivalence.

\begin{Ex1}\label{example-not-invariant-under-de}
Let $E=\{1,1',1'',2,2',3,3',4,4',5,5'\}$. Define the group action on $E$ by $g\cdot 1=2$, $g\cdot 2=3$, $g\cdot 3=1$, $g\cdot 1'=4'$, $g\cdot 4'=1'$, $g\cdot 1''=5'$, $g\cdot 5'=1''$, $g\cdot 2'=2'$, $g\cdot 3'=3'$, $g\cdot 4=4$, $g\cdot 5=5$. Define $P(1)=\{1,1',1''\}$, $P(2)=\{2,2'\}$, $P(3)=\{3,3'\}$, $P(4)=\{4,4'\}$, $P(5)=\{5,5'\}$, and $L(e)=\{e\}$ for every $e\in E$. The f-degree of $E$ is defined to be trivial. Then $E$ is a Brauer configuration, and $A_E$ is a BCA, which is given by the quiver
$$
\vcenter{
\xymatrix@R=1pc@C=2pc  {
	2\ar[ddr]_{\beta_3}&&3\ar[ll]_{\beta_2} \\
    && \\
    &1\ar[uur]_{\beta_1}\ar@/_/[dl]_{\gamma_1}\ar@/_/[dr]_{\delta_1}& \\
    4\ar@/_/[ur]_{\gamma_2}&&5\ar@/_/[ul]_{\delta_2}
}} $$
with relations $0=\beta_1\gamma_2=\gamma_1\beta_3=\beta_1\delta_2=\delta_1\beta_3=\gamma_1\delta_2=\delta_1\gamma_2=\beta_2\beta_1\beta_3\beta_2$, $\beta_3\beta_2\beta_1=\gamma_2\gamma_1=\delta_2\delta_1$.

$A_E$ is a representation-finite self-injective algebra of type $(D_5,1,1)$, which is derived equivalent to the algebra $B$, where $B$ is given by the quiver
$$
\vcenter{
\xymatrix@R=1pc@C=2pc  {
	&&4\ar[dll]_{\alpha_1}& \\
    5\ar[drr]_{\alpha_2}&2\ar[ur]_{\gamma_2}&&3\ar[ul]_{\delta_2} \\
    &&1\ar[ul]_{\gamma_1}\ar[ur]_{\delta_1}&
}} $$
with relations $\gamma_2\gamma_1=\delta_2\delta_1$, $0=\delta_1\alpha_2\alpha_1\gamma_2=\gamma_1\alpha_2\alpha_1\delta_2=
\alpha_2\alpha_1\gamma_2\gamma_1\alpha_2=\alpha_1\gamma_2\gamma_1\alpha_2\alpha_1$.

Since $B$ is not multiserial, it is not a BCA. However, $B$ is an $f_s$-BCA, whose fractional Brauer configuration $E'=(E',P',L',d')$ is given as follows:
$E'=\{1,1',2,3',4,4',5,5'\}$. The action of $\langle g\rangle$ on $E'$ is given by $g\cdot 1=5$, $g\cdot 5=4$, $g\cdot 4=2$, $g\cdot 2=1$, $g\cdot 1'=5'$, $g\cdot 5'=4'$, $g\cdot 4'=3'$, $g\cdot 3'=1'$. The partition $P'$ on $E'$ is given by $P'(1)=\{1,1'\}$, $P'(2)=\{2\}$, $P'(3')=\{3'\}$, $P'(4)=\{4,4'\}$, $P'(5)=\{5,5'\}$. The partition $L'$ on $E'$ is given by $L'(4)=\{4,4'\}$, $L'(5)=\{5,5'\}$, and $L'(e)=\{e\}$ for other $e\in E'$. The f-degree of $E$ is defined to be trivial.
\end{Ex1}

\section{Fractional Brauer configuration algebras and representation-finite self-injective algebras}

Throughout this section $k$ will be an algebraically closed field.

Let $Q$ be a locally finite connected quiver without double arrows, $I$ be an admissible ideal of path category $kQ$. The pair $(Q,I)$ is called a quiver with relations.

Recall from \cite[Definition 2.2]{BG} that a locally bounded category $\Lambda$ is called locally representation-finite if for every object $x$ of $\Lambda$, the number of isomorphism classes of finitely generated indecomposable $\Lambda$-module $l$ such that $l(x)\neq 0$ is finite. Denote the Gabriel quiver of $\Lambda$ by $Q_\Lambda$. Throughout this section we assume that the Gabriel quiver $Q_\Lambda$ is connected. Note that if $\Lambda$ is locally representation-finite and connected, then it admits the Auslander-Reiten sequences and its Auslander-Reiten quiver is also connected (see \cite{BG}).

For a translation quiver $\Gamma$, we let $k\Gamma$ be its path category, and let $k(\Gamma)$ be the mesh category of $\Gamma$, which is a factor category of $k\Gamma$ by the mesh ideal. For a locally bounded category $\Lambda$, we denote by ind$\Lambda$ the category formed by chosen representatives of the finitely generated indecomposable modules.

\begin{Def}{\rm (\cite[Definition 5.1]{BG})}\label{standard l.r.f. category}
A locally representation-finite category $\Lambda$ is said to be standard if $k(\Gamma_{\Lambda})\cong \mathrm{ind}\Lambda$, where $\Gamma_{\Lambda}$ is the Auslander-Reiten quiver of $\Lambda$.
\end{Def}

\begin{Def}{\rm (\cite[Definition 1.3]{MV-DLP})}\label{minimal relation}
Let $(Q,I)$ be a quiver with relations. A relation $\rho=\sum_{i=1}^{n}\lambda_i u_i\in I(x,y)$ with $\lambda_i\in k^{*}$ and $u_i$ a path from $x$ to $y$, is a minimal relation if $n\geq 2$ and for every non-empty proper subset $K$ of $\{1,\cdots,n\}$, $\sum_{i\in K}\lambda_i u_i\notin I(x,y)$.
\end{Def}

For a quiver with relations $(Q,I)$, \cite[Lemma 2.3]{MV-DLP} introduces two conditions: \\ $(D)$ If $\rho=\sum_{i=1}^{n}\lambda_i u_i\in I$ is a minimal relation, then for every two different $i,j\in\{1,\cdots,n\}$ there exists $c\in k^{*}$ such that $u_i+cu_j\in I$. \\ $(C)$ Let $x$, $y$, $z\in Q_0$, $u$ be a path of $Q$ with source $x$ and terminal $y$, and $v$, $w$ be two paths of $Q$ with source $y$ and terminal $z$, such that $vu$, $wu\notin I$. Then for $\lambda\in k^{*}$, $vu+\lambda wu\in I$ if and only if $v+\lambda w\in I$.

According to \cite[Corollary 3.9]{MV-DLP}, a locally representation-finite $k$-category $\Lambda$ is standard if and only if $\Lambda\cong kQ/I$ for some quiver with relations $(Q,I)$ which satisfies conditions $(D)$ and $(C)$.

\begin{Prop}\label{l.r.f. fsBCC is standard}
Let $E=(E,P,L,d)$ be an $f_s$-BC such that $\Lambda_{E}$ is locally representation-finite. Then $(Q'_E,I'_E)$ satisfies conditions $(D)$ and $(C)$, hence $\Lambda_E\cong kQ'_E/I'_E$ is standard.
\end{Prop}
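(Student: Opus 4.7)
The plan is to apply the criterion of Mart\'inez-Villa and De La Pe\~na \cite[Corollary 3.9]{MV-DLP}: since $\Lambda_E$ is locally representation-finite and $\Lambda_E\cong kQ'_E/I'_E$ by Lemma \ref{Gabriel quiver}, standardness will follow once conditions $(D)$ and $(C)$ are verified for the presentation $(Q'_E, I'_E)$. The essential tool is Lemma \ref{a basis of ideal I' E}, giving the explicit decomposition $I'_E(x,y)=k\prescript{}{y}{(\mathscr{B}'_1)}_x\oplus k\prescript{}{y}{(\mathscr{B}'_2)}_x\oplus k\{u-v\mid u,v\in\prescript{}{y}{\mathscr{E}'}_x,\,uRv\}$, which together with the admissibility $I'_E\subseteq J^2$ completely controls when a $k$-linear combination of paths lies in $I'_E$.

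First I would verify $(D)$. Let $\rho=\sum_{i=1}^{n}\lambda_i u_i\in I'_E$ be a minimal relation with $n\geq 2$. Admissibility rules out any trivial $u_i$ (else $\rho$ would have a nonzero length-zero component), and if some $u_i\in\mathscr{B}'_1\cup\mathscr{B}'_2$ then $\lambda_i u_i\in I'_E$ is a proper sub-sum, contradicting minimality; so all $u_i\in\mathscr{E}'$. The basis description then forces $\sum_{u_i\in C}\lambda_i=0$ for every $R$-class $C$, and if two distinct classes contributed, the partial sum over one of them would again violate minimality. Hence all $u_i$ share a common $R$-class, so for any $i\neq j$ one has $u_i-u_j\in I'_E$, giving $(D)$ with $c=-1$.

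For $(C)$, the direction $v+\lambda w\in I'_E\Rightarrow vu+\lambda wu\in I'_E$ is immediate by right-composition. For the converse I would first dispose of the degenerate cases: if $u$ is trivial the statement is tautological; if $v$ and $w$ are both trivial then each side reduces to $\lambda=-1$; and if exactly one of $v,w$ is trivial (say $v=1_y$ with $w$ a nontrivial cycle at $y$), then $u+\lambda wu\in I'_E$ with $u,wu\notin I'_E$ would force $u=wu$ in $\Lambda_E$, hence $u=w^k u$ for all $k$, contradicting $w^k=0$ for large $k$ (nilpotency of the radical in the locally bounded category $\Lambda_E$, Theorem \ref{f-BC algebra is locally bounded}). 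So suppose $u,v,w$ all have positive length. Then $vu,wu\in\mathscr{E}'$, and the basis description forces $vuRwu$ and $\lambda=-1$. Pick standard sequences $p,q$ with $L(p)=vu$, $L(q)=wu$ and $\prescript{\wedge}{}{p}\equiv\prescript{\wedge}{}{q}$, and split $p=p^v\cdot p^u$, $q=q^w\cdot q^u$ with $L(p^u)=L(q^u)=u$, $L(p^v)=v$, $L(q^w)=w$; here $p^v,q^w$ are themselves standard sequences. A direct calculation from the definitions of $\prescript{\wedge}{}{(-)}$ yields the key identity
\[
\prescript{\wedge}{}{p^v}=\sigma(p^u)\cdot\prescript{\wedge}{}{p},\qquad \prescript{\wedge}{}{q^w}=\sigma(q^u)\cdot\prescript{\wedge}{}{q},
\]
where $\sigma(p^u)$ denotes $p^u$ with the Nakayama permutation $\sigma$ applied term-by-term. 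Applying $(f5)$ entry-by-entry to the equation $L(p^u)=L(q^u)$ gives $L(\sigma(p^u))=L(\sigma(q^u))$; combined with $L(\prescript{\wedge}{}{p})=L(\prescript{\wedge}{}{q})$ this yields $\prescript{\wedge}{}{p^v}\equiv\prescript{\wedge}{}{q^w}$. Hence $vRw$, so $v-w\in I'_E$, completing $(C)$.

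The main technical obstacle is the identity $\prescript{\wedge}{}{p^v}=\sigma(p^u)\cdot\prescript{\wedge}{}{p}$: one has to chase the definitions carefully when a standard sequence is split at an intermediate base angle, and verify that the ``wrap-around'' across the $G$-orbit is captured precisely by the Nakayama shift. Once this identity is in hand, $(f5)$ closes the argument automatically. The degenerate branches of $(C)$ look innocuous but genuinely need the nilpotency of the radical provided by Theorem \ref{f-BC algebra is locally bounded}; without it, a ``$v$ trivial, $w$ nontrivial'' scenario could in principle falsify $(C)$.
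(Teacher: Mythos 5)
Your proof is correct and follows essentially the same route as the paper: both verify $(D)$ and $(C)$ via the decomposition of $I'_E$ in Lemma \ref{a basis of ideal I' E} and conclude standardness from \cite[Corollary 3.9]{MV-DLP}, with your identity $\prescript{\wedge}{}{p^v}=\sigma(p^u)\cdot\prescript{\wedge}{}{p}$ being the left-complement mirror of the paper's $p_2^{\wedge}=p_1p^{\wedge}\equiv q_1q^{\wedge}=q_2^{\wedge}$ combined with Remark \ref{identical}(3). Your explicit treatment of the degenerate branches of $(C)$ is sound but not strictly necessary, since the uniform argument already forces $vRw$, which by the remark after Definition \ref{relation R} collapses to $v=w$ whenever one of them is trivial.
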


\begin{proof}
By Lemma \ref{a basis of ideal I' E}, any relation $r\in I'_E(x,y)$ can be written as $\sum_{i=1}^{n}\lambda_i u_i+\sum_{j=1}^{m}\mu_j v_j+\sum_{k=1}^{d}\nu_k (p_k-q_k)$, where $\lambda_i$, $\mu_j$, $\nu_k\in k^{*}$, $u_i\in\prescript{}{y}{(\mathscr{B}'_1)}_x$, $v_j\in\prescript{}{y}{(\mathscr{B}'_2)}_x$, and $p_k$, $q_k\in\prescript{}{y}{\mathscr{E}'}_x$ with $p_k R q_k$. If $r$ is minimal, then $n$, $m=0$. Therefore $r$ is of the form $\sum_{i=1}^{N}\sum_{j=1}^{d_i}c_{ij}w_{ij}$, where $c_{ij}\in k^{*}$, $w_{ij}\in\prescript{}{y}{\mathscr{E}'}_x$, and $w_{ij}Rw_{i',j'}$ if and only if $i=i'$. Since $r\in I'_E(x,y)$, by Lemma \ref{a basis of ideal I' E} we have $\sum_{j=1}^{d_i}c_{ij}=0$ for each $i$, and therefore $\sum_{j=1}^{d_i}c_{ij}w_{ij}\in I'_E(x,y)$ for each $i$. Since $r$ is minimal, $N=1$. For every two different $i,j\in\{1,\cdots,d_1\}$, since $w_{1i}Rw_{1j}$, $w_{1i}-w_{1j}\in I'_E$. Then $(Q'_E,I'_E)$ satisfies condition $(D)$.

Let $x$, $y$, $z$ be vertices of $Q'_E$ and $u$, $v$, $w$ be paths of $Q'_E$ with $u\in kQ'(x,y)$, $v$, $w\in kQ'(y,z)$, such that $vu$, $wu\notin I'_E$. If $vu+\lambda wu\in I'_E$ for $\lambda\in k^{*}$, by Lemma \ref{a basis of ideal I' E} we have $\lambda=-1$ and $vu$, $wu\in\prescript{}{z}{\mathscr{E}'}_x$ with $(vu)R(wu)$. Let $vu=L(p)$ and $wu=L(q)$ for some standard sequences $p$, $q$ of $E$ with $\prescript{\wedge}{}{p}\equiv \prescript{\wedge}{}{q}$, we may write $p=p_2 p_1$ and $q=q_2 q_1$, where $L(p_1)=L(q_1)=u$, $L(p_2)=v$, $L(q_2)=w$. Then $p_{2}^{\wedge}=p_1 p^{\wedge}\equiv q_1 q^{\wedge}=q_{2}^{\wedge}$. So $\prescript{\wedge}{}{p_2}\equiv \prescript{\wedge}{}{q_2}$ and $vRw$. Therefore $v+\lambda w=v-w\in I'_E$. Then $(Q'_E,I'_E)$ satisfies condition $(C)$.
\end{proof}

Let $\Lambda$ be a locally representation-finite category. For any objects $a$, $b$ of $\Lambda$, $\Lambda(a,b)$ is a uniserial bimodule over $\Lambda(b,b)$ and $\Lambda(a,a)$ (see \cite[Section 2.4]{G}). Denote $$ \mathscr{R}^{0}\Lambda(a,b)=\Lambda(a,b)\supseteq\mathscr{R}^{1}\Lambda(a,b)\supseteq\mathscr{R}^{2}\Lambda(a,b)\supseteq\cdots\supseteq\mathscr{R}^{i}\Lambda(a,b)
\supseteq\cdots $$
the radical series of $\Lambda(a,b)$ as a $\Lambda(b,b)$-$\Lambda(a,a)$-bimodule. A morphism $\mu\in\Lambda(a,b)$ is said to have level $n$ if $\mu\in\mathscr{R}^{n}\Lambda(a,b)-\mathscr{R}^{n+1}\Lambda(a,b)$. We denote by $I=\mathrm{ind}\Lambda$ the category formed by chosen representatives of the isomorphism classes of indecomposable finite-dimensional $\Lambda$-modules, such that $a^{*}=\Lambda(-,a)$ is chosen as a representative for each object $a$ of $\Lambda$. Note that $(-)^{*}$ induces a fully faithful functor $\Lambda\rightarrow I=\mathrm{ind}\Lambda$. Denote
$$\mathrm{rad}^0_{I}=I\supseteq\mathrm{rad}^1_{I}\supseteq\mathrm{rad}^2_{I}\supseteq\cdots\supseteq\mathrm{rad}^p_{I}
\supseteq\cdots$$
the radical series of the category $I$. Since $\mathrm{rad}^{p}_{I}(a^{*},b^{*})$ is a bimodule over $I(b^{*},b^{*})$ and $I(a^{*},a^{*})$, it is equal to some $\mathscr{R}^{n}\Lambda(a,b)^{*}$. Let $g(a,b,n)=\mathrm{sup}\{p\mid \mathrm{rad}^{p}_{I}(a^{*},b^{*})=\mathscr{R}^{n}\Lambda(a,b)^{*}\}$. Define the grade $g(\mu)$ of a morphism $\mu\in\Lambda(a,b)$ of level $n$ by $g(\mu)=g(a,b,n)$. See \cite[Section 1.1]{Br-G}.

\begin{Rem1}\label{codimension}
The number $g(a,b,n)$ is defined for all $a,b\in\Lambda$ and for all non-negative integer $n$: When $n=0$, we have $I(a^{*},b^{*})=\Lambda(a,b)^{*}$, so there exists some $p$ such that $\mathrm{rad}^{p}_{I}(a^{*},b^{*})=\mathscr{R}^{n}\Lambda(a,b)^{*}$. When $n>0$, suppose that $g(a,b,n')$ is defined for all $n'<n$. Denote $g=g(a,b,n-1)$, then $\mathrm{rad}^{g}_{I}(a^{*},b^{*})=\mathscr{R}^{n-1}\Lambda(a,b)^{*}$. We may assume that $g<\infty$, so $\mathrm{rad}^{g+1}_{I}(a^{*},b^{*})\neq\mathscr{R}^{n-1}\Lambda(a,b)^{*}$. According to \cite[Section 2.4]{G}, $\mathscr{R}^{n}\Lambda(a,b)$ has codimension $\leq 1$ in $\mathscr{R}^{n-1}\Lambda(a,b)$. Moreover, \begin{multline*}\mathscr{R}^{n}\Lambda(a,b)^{*}=(\mathscr{R}^{n-1}\Lambda(a,b)\mathrm{rad}\Lambda(a,a)+\mathrm{rad}\Lambda(b,b)\mathscr{R}^{n-1}\Lambda(a,b))^{*} \\=\mathrm{rad}^{g}_{I}(a^{*},b^{*})\mathrm{rad}I(a^{*},a^{*})+\mathrm{rad}I(b^{*},b^{*})\mathrm{rad}^{g}_{I}(a^{*},b^{*})
\\=\mathrm{rad}^{g}_{I}(a^{*},b^{*})\mathrm{rad}_{I}^{1}(a^{*},a^{*})+\mathrm{rad}_{I}^{1}(b^{*},b^{*})\mathrm{rad}^{g}_{I}(a^{*},b^{*})\subseteq\mathrm{rad}^{g+1}_{I}(a^{*},b^{*}). \end{multline*}
Therefore $\mathrm{rad}^{g+1}_{I}(a^{*},b^{*})=\mathscr{R}^{n}\Lambda(a,b)^{*}$. Note that $\mathrm{rad}^{g+1}_{I}(a^{*},b^{*})$ has codimension $1$ in $\mathrm{rad}^{g}_{I}(a^{*},b^{*})$.
\end{Rem1}

Let $\Lambda$ be a locally representation-finite category. According to \cite[Section 2.1]{G}, the universal cover $\widetilde{\Lambda}$ of $\Lambda$ is defined to be the full subcategory of $k(\widetilde{\Gamma}_{\Lambda})$ formed by projective vertices of $\widetilde{\Gamma}_{\Lambda}$, where $\widetilde{\Gamma}_{\Lambda}$ denotes the universal cover of the Auslander-Reiten quiver $\Gamma_{\Lambda}$ of $\Lambda$. Moreover, there exists a covering functor $F:\widetilde{\Lambda}\rightarrow\Lambda$, which is given by the commutative diagram
$$\xymatrix@R=1.3pc{
	k(\widetilde{\Gamma}_{\Lambda})\ar[r]^{E} & I=ind\Lambda \\
    \widetilde{\Lambda}\ar[r]^{F}\ar@{^(-_>}[u]^{incl.} &\Lambda\ar@{^(-_>}[u]_{(-)^{*}}, \\
}$$
where $E:k(\widetilde{\Gamma}_{\Lambda})\rightarrow I$ is a well-behaved functor (\cite[Section 3.1]{BG}).

Since $\widetilde{\Gamma}_{\Lambda}$ is simply connected (that is, it is connected and its fundamental group is trivial), it does not contains oriented cycles. Therefore $k(\widetilde{\Gamma}_{\Lambda})(x,x)\cong k$ for each object $x$ of $k(\widetilde{\Gamma}_{\Lambda})$. Since $\widetilde{\Lambda}$ is a full subcategory of $k(\widetilde{\Gamma}_{\Lambda})$, $\widetilde{\Lambda}(x,x)\cong k$ for each object $x$ of $\widetilde{\Lambda}$. Since $\widetilde{\Gamma}_{\Lambda}$ is a Riedtmann-quiver (\cite[Theorem 2.9]{BG}), by \cite[Proposition 2.4]{BG}, $\widetilde{\Lambda}$ is locally representation-finite. By \cite[Section 1.3]{Br-G}, $\widetilde{\Lambda}$ is schurian, that is, $\mathrm{dim}_{k}\widetilde{\Lambda}(x,y)\leq 1$ for all objects $x,y$ of $\widetilde{\Lambda}$.

Let $M$ be a locally bounded Schurian category. A path
$$ x=x_0\rightarrow x_1\rightarrow \cdots \rightarrow x_p=y $$
of $Q_M$ is said to be nonzero if the composition
$$ M(x_{p-1},x_p)\times\cdots\times M(x_0,x_1)\rightarrow M(x_0,x_p) $$
is nonzero (see \cite[Section 1.3]{Br-G}).

In next two lemmas, we use the following notations from \cite{Br-G}: denote $\overline{\alpha}$ an arrow in $Q_{\Lambda}$ and denote $\alpha$ a representative of $\overline{\alpha}$, that is, a morphism of $\Lambda$ which belongs to $\mathrm{rad}_{\Lambda}(x,y)-\mathrm{rad}^{2}_{\Lambda}(x,y)$ with $x$ (resp. $y$) the source (resp. terminal) of $\overline{\alpha}$ (here $\mathrm{rad}_{\Lambda}$ is the radical of the $k$-category $\Lambda$).

\begin{Lem}\label{stable path} {\rm(\cite[Lemma 1.4]{Br-G})}
Let $\Lambda$ be a locally representation-finite category, $a_0\xrightarrow{\overline{\alpha_{1}}}a_1\xrightarrow{\overline{\alpha_{2}}}\cdots\rightarrow a_{n-1}\xrightarrow{\overline{\alpha_{n}}}a_n$ be a path of $Q_{\Lambda}$ and $\alpha_i\in\mathrm{rad}_{\Lambda}(a_{i-1},a_i)-\mathrm{rad}^{2}_{\Lambda}(a_{i-1},a_i)$ a representative of $\overline{\alpha_{i}}$. The following statements are equivalent: \\ (1) $g(\alpha_n\cdots\alpha_2\alpha_1)=\sum_{i=1}^{n}g(\alpha_i)$; \\ (2) $\alpha'_n\cdots\alpha'_2\alpha'_1\neq 0$ for all representatives $\alpha'_i$ of $\overline{\alpha_{i}}$; \\ (3) $\overline{\alpha_{n}}\cdots\overline{\alpha_{2}}\overline{\alpha_{1}}$ is the projection of a nonzero path of $\widetilde{\Lambda}$, where $\widetilde{\Lambda}$ is the universal cover of $\Lambda$.
\end{Lem}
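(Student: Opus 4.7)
The plan is to route all three statements through the universal cover $\widetilde{\Lambda}$ and exploit that it is Schurian, locally representation-finite, and has an acyclic simply connected Gabriel quiver. After fixing a lift $\widetilde{a}_0$ of $a_0$, the path $\overline{\alpha_n}\cdots\overline{\alpha_1}$ admits a unique lift $\widetilde{a}_0\xrightarrow{\overline{\beta_1}}\widetilde{a}_1\xrightarrow{\overline{\beta_2}}\cdots\xrightarrow{\overline{\beta_n}}\widetilde{a}_n$ in $Q_{\widetilde{\Lambda}}$. The covering functor $F:\widetilde{\Lambda}\to\Lambda$ induces the standard decomposition
$$\Lambda(a_0,a_n)\;\cong\;\bigoplus_{\widetilde{y}\in F^{-1}(a_n)}\widetilde{\Lambda}(\widetilde{a}_0,\widetilde{y}),$$
and the well-behavedness of $E:k(\widetilde{\Gamma}_\Lambda)\to I$ in the square preceding the lemma ensures that $F$ preserves grades: $g(\beta_i)=g(\alpha_i)$ for every representative $\beta_i$ of $\overline{\beta_i}$, and $g(\beta_n\cdots\beta_1)=g(\alpha_n\cdots\alpha_1)$ whenever either side is finite.

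For $(2)\Leftrightarrow(3)$, I would proceed as follows. Since $\widetilde{\Lambda}$ is Schurian and $Q_{\widetilde{\Lambda}}$ has no multiple arrows, each space $\widetilde{\Lambda}(\widetilde{a}_{i-1},\widetilde{a}_i)$ is one-dimensional, so any representative of $\overline{\beta_i}$ is a nonzero scalar multiple of a fixed generator $\beta_i$. Given arbitrary $\alpha'_i\in\mathrm{rad}_\Lambda(a_{i-1},a_i)-\mathrm{rad}^2_\Lambda(a_{i-1},a_i)$, its component under the covering decomposition at $\widetilde{a}_i$ is a nonzero scalar multiple of $\beta_i$, while the remaining components live over other lifts of $a_{i-1}$ ending at vertices distinct from $\widetilde{a}_i$. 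Composing, the $\widetilde{a}_n$-summand of $\alpha'_n\cdots\alpha'_1$ equals a nonzero scalar multiple of $\beta_n\cdots\beta_1$, and components supported on other lifts cannot cancel it because they live in different summands. Hence $\alpha'_n\cdots\alpha'_1\neq 0$ for every choice of representatives if and only if $\beta_n\cdots\beta_1\neq 0$ in $\widetilde{\Lambda}$, which is precisely (3).

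For $(1)\Leftrightarrow(3)$, the inclusion $\mathrm{rad}^p_I\cdot\mathrm{rad}^q_I\subseteq\mathrm{rad}^{p+q}_I$ yields the superadditivity $g(\alpha_n\cdots\alpha_1)\geq\sum_i g(\alpha_i)$ when the composition is nonzero. Applying Remark \ref{codimension} inductively to the partial products $\alpha_j\cdots\alpha_1$, equality at each stage is equivalent to each partial composition sitting at the minimal possible radical depth predicted by grade addition. By the grade-preserving identification with $\widetilde{\Lambda}$ from the first paragraph, this minimality is equivalent to $\beta_j\cdots\beta_1$ being nonzero in $\widetilde{\Lambda}$ for every $j$; since $\widetilde{\Lambda}$ is Schurian, non-vanishing of $\beta_n\cdots\beta_1$ implies non-vanishing of every initial segment, so the two conditions collapse to (3). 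This closes the loop.

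The main obstacle I expect is the precise bookkeeping required to show that a vanishing $\beta_j\cdots\beta_1$ in $\widetilde{\Lambda}$ forces a \emph{strict} inequality $g(\alpha_j\cdots\alpha_1)>g(\alpha_j)+\cdots+g(\alpha_1)$ downstairs, ruling out an ``accidental'' grade match not realised by a lifted path. This amounts to tracking mesh relations through $E$: a vanishing partial product upstairs can only be absorbed downstairs by using the mesh relations that push the composition into the next radical layer of $I$, producing exactly the extra unit of grade needed. Once this is verified via the well-behavedness of $E$, the remaining implications become formal consequences of the covering-theoretic dictionary.
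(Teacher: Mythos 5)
First, a point of reference: the paper does not prove this statement at all --- it is quoted verbatim as \cite[Lemma 1.4]{Br-G} and used as a black box, so there is no in-paper argument to compare yours against. Judged on its own terms, your reconstruction sets up the right machinery (unique lift of the path to $Q_{\widetilde{\Lambda}}$, the covering decomposition $\Lambda(a_0,a_n)\cong\bigoplus_{\widetilde{y}\in F^{-1}(a_n)}\widetilde{\Lambda}(\widetilde{a}_0,\widetilde{y})$, Schurian-ness of $\widetilde{\Lambda}$), and the equivalence $(2)\Leftrightarrow(3)$ is essentially recoverable from it. But even there your stated reason is off: a composite $\alpha'_n\cdots\alpha'_1$ \emph{can} contribute to the $\widetilde{a}_n$-summand through intermediate lifts $\widetilde{y}_i\neq\widetilde{a}_i$, since such a wandering path may well return to $\widetilde{a}_n$; these terms are not ``in different summands.'' They are harmless for a different reason: each off-lift component of $\alpha'_i$ lies in $\mathrm{rad}^2_{\widetilde{\Lambda}}$, so the total spurious contribution lies in $\mathrm{rad}^{n+1}_{\widetilde{\Lambda}}(\widetilde{a}_0,\widetilde{a}_n)$, which vanishes because that hom space is at most one-dimensional and locally nilpotent once $\beta_n\cdots\beta_1\neq 0$. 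That fix is routine, so I would not count it as fatal.

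The genuine gap is in $(1)\Leftrightarrow(3)$, and you have in effect conceded it yourself by deferring ``the main obstacle'' to unspecified bookkeeping. Superadditivity $g(\alpha_n\cdots\alpha_1)\geq\sum_i g(\alpha_i)$ does follow from $\mathrm{rad}^p_I\cdot\mathrm{rad}^q_I\subseteq\mathrm{rad}^{p+q}_I$ together with the identification of $g(\mu)$ with the radical depth of $\mu^*$ in $I$ (via Remark \ref{codimension}). But the two implications you actually need --- that $\beta_n\cdots\beta_1\neq 0$ forces \emph{equality}, and that $\beta_n\cdots\beta_1=0$ forces \emph{strict} inequality --- both reduce to the claim that in the mesh category $k(\widetilde{\Gamma}_{\Lambda})$ of the simply connected cover, the radical depth of a nonzero composite of irreducible maps equals the sum of the depths of the factors (equivalently, that the grade behaves as a length function on $\widetilde{\Gamma}_{\Lambda}$). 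That additivity is the real content of Bretscher--Gabriel's Lemma 1.4; it is a structural property of mesh categories of simply connected translation quivers and does not follow formally from ``well-behavedness of $E$'' or from the covering dictionary, which only give you that $E$ and $F$ preserve the radical filtrations, hence the inequality $\geq$. Your sketch asserts the equivalence of ``minimal radical depth'' with ``nonzero lifted path'' without supplying the argument, so as written the proof of $(1)\Leftrightarrow(3)$ is circular at exactly the point where the lemma has content. A complete proof must either import the relevant combinatorics of $k(\widetilde{\Gamma}_{\Lambda})$ (e.g.\ the description of $\mathrm{rad}^p/\mathrm{rad}^{p+1}$ in a mesh category of a Riedtmann quiver) or simply cite \cite{Br-G}, as the paper does.
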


If a path $u$ of $Q_{\Lambda}$ satisfies one of the statements of Lemma \ref{stable path}, then it is called a stable path. For a stable path $a_0\xrightarrow{\overline{\alpha_{1}}}a_1\xrightarrow{\overline{\alpha_{2}}}\cdots\rightarrow a_{n-1}\xrightarrow{\overline{\alpha_{n}}}a_n$ of $Q_{\Lambda}$, the level of $\alpha_n\cdots\alpha_2\alpha_1$ is independent to the choices of the representatives $\alpha_i$ of $\overline{\alpha_{i}}$. Define the level of the stable path $a_0\xrightarrow{\overline{\alpha_{1}}}a_1\xrightarrow{\overline{\alpha_{2}}}\cdots\rightarrow a_{n-1}\xrightarrow{\overline{\alpha_{n}}}a_n$ to be the level of the morphism $\alpha_n\cdots\alpha_2\alpha_1$.

\begin{Def} \label{definition-stable-contour} {\rm(see \cite[Section 1.5]{Br-G})}
Let $\Lambda$ be a locally representation-finite category. A stable contour of $\Lambda$ is a pair $(v,w)$ of stable paths of $Q_{\Lambda}$ which have the same source, the same terminal and the same level.
\end{Def}

\begin{Lem}\label{stable contour}
Let $\Lambda$ be a locally representation-finite category, $(v,w)$ be a pair of paths of $Q_{\Lambda}$ with the same source $a$ and the same terminal $b$.
\begin{itemize}
  \item[$(1)$] Let $\widetilde{a}$ be a point of the universal cover $\widetilde{\Lambda}$ which lies over $a$, $\widetilde{v}$ be the path of $Q_{\widetilde{\Lambda}}$ with source $\widetilde{a}$ which lies over $v$, and $\widetilde{w}$ be the path of $Q_{\widetilde{\Lambda}}$ with source $\widetilde{a}$ which lies over $w$. Then $(v,w)$ is a stable contour of $\Lambda$ if and only if $\widetilde{v}$ and $\widetilde{w}$ are nonzero paths of $\widetilde{\Lambda}$ with the same terminal.
  \item[$(2)$] Let $\widetilde{b}$ be a point of the universal cover $\widetilde{\Lambda}$ which lies over $b$, $\widetilde{v}$ be the path of $Q_{\widetilde{\Lambda}}$ with terminal $\widetilde{b}$ which lies over $v$, and $\widetilde{w}$ be the path of $Q_{\widetilde{\Lambda}}$ with terminal $\widetilde{b}$ which lies over $w$. Then $(v,w)$ is a stable contour of $\Lambda$ if and only if $\widetilde{v}$ and $\widetilde{w}$ are nonzero paths of $\widetilde{\Lambda}$ with the same source.
\end{itemize}
\end{Lem}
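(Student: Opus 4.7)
The plan is to prove part (1) by translating stability into the language of the universal cover via Lemma \ref{stable path} and then exploiting the morphism-space decomposition induced by the Galois covering $F\colon \widetilde{\Lambda}\to \Lambda$; part (2) will follow by the symmetric argument, lifting paths from the common terminal $\widetilde{b}$ instead of from the common source. Throughout I will freely use two structural facts already recorded in the excerpt: (a) $\widetilde{\Lambda}$ is schurian, so $\dim_k\widetilde{\Lambda}(\widetilde{x},\widetilde{y})\le 1$; and (b) $\Lambda(a,b)$ is uniserial as a bimodule over the local algebras $\Lambda(a,a)$ and $\Lambda(b,b)$, so $\dim_k\bigl(\mathscr{R}^n\Lambda(a,b)/\mathscr{R}^{n+1}\Lambda(a,b)\bigr)\le 1$.

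For the easy direction $(\Leftarrow)$ in (1), suppose the lifts $\widetilde{v},\widetilde{w}$ starting at $\widetilde{a}$ are nonzero paths of $\widetilde{\Lambda}$ with a common terminal $\widetilde{b}'\in F^{-1}(b)$. Lemma \ref{stable path}(3) then tells us that both $v$ and $w$ are stable. By the schurness of $\widetilde{\Lambda}$, $\widetilde{\Lambda}(\widetilde{a},\widetilde{b}')\cong k$, so $\widetilde{v}$ and $\widetilde{w}$ agree up to a scalar in $\widetilde{\Lambda}$. Applying the covering functor $F$ gives $v = cw$ in $\Lambda$ for some $c\in k^*$, and in particular $v$ and $w$ have the same level. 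Thus $(v,w)$ is a stable contour.

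For the harder direction $(\Rightarrow)$ in (1), assume $(v,w)$ is a stable contour; again Lemma \ref{stable path}(3) gives nonzero lifts $\widetilde{v}\in \widetilde{\Lambda}(\widetilde{a},\widetilde{b}_v)$ and $\widetilde{w}\in\widetilde{\Lambda}(\widetilde{a},\widetilde{b}_w)$, and I wish to show $\widetilde{b}_v=\widetilde{b}_w$. Let $n$ denote the common level of $v$ and $w$ in $\Lambda(a,b)$. By the uniseriality recalled above, there exists $c\in k^*$ such that
\[
v - cw \;\in\; \mathscr{R}^{n+1}\Lambda(a,b).
\]
Now I invoke the standard decomposition arising from the Galois covering $F$: fixing $\widetilde{a}\in F^{-1}(a)$ gives a $k$-linear isomorphism $\Lambda(a,b)\cong\bigoplus_{\widetilde{b}'\in F^{-1}(b)}\widetilde{\Lambda}(\widetilde{a},\widetilde{b}')$, and this isomorphism is compatible with the radical filtration, so that $\mathscr{R}^{m}\Lambda(a,b)=\bigoplus_{\widetilde{b}'}\mathscr{R}^m\widetilde{\Lambda}(\widetilde{a},\widetilde{b}')$ for every $m\ge 0$, and the level of a morphism in $\widetilde{\Lambda}(\widetilde{a},\widetilde{b}')$ coincides with the level of its image under $F$. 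Assuming $\widetilde{b}_v\neq\widetilde{b}_w$, the component of $v-cw$ in the summand $\widetilde{\Lambda}(\widetilde{a},\widetilde{b}_v)$ is exactly $\widetilde{v}$, which has level $n$, contradicting $v-cw\in\mathscr{R}^{n+1}$. Therefore $\widetilde{b}_v=\widetilde{b}_w$, as required.

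The main obstacle is the compatibility of the Galois-covering decomposition with the bimodule radical filtration—i.e.\ both the splitting $\mathscr{R}^{m}\Lambda(a,b)=\bigoplus_{\widetilde{b}'}\mathscr{R}^m\widetilde{\Lambda}(\widetilde{a},\widetilde{b}')$ and the equality $\mathrm{level}_{\Lambda}(F(\widetilde\mu))=\mathrm{level}_{\widetilde\Lambda}(\widetilde\mu)$. This requires unpacking the definition of the bimodule radical $\mathscr{R}^{\bullet}$ in terms of compositions with $\mathrm{rad}\,\Lambda(a,a)$ and $\mathrm{rad}\,\Lambda(b,b)$, writing each such radical as a sum of off-diagonal $\widetilde{\Lambda}$-hom-spaces under the Galois decomposition (using schurness to kill the diagonal contribution), and checking inductively that compositions stay within the expected summand. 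Once that compatibility is secured, both directions of (1) follow as above, and applying the same argument to the universal cover based at a lift $\widetilde{b}$ of $b$ (where $F$ admits the dual decomposition $\Lambda(a,b)\cong\bigoplus_{\widetilde{a}'\in F^{-1}(a)}\widetilde{\Lambda}(\widetilde{a}',\widetilde{b})$) yields part (2).
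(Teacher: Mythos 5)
Your backward direction is exactly the paper's argument (schurian property of $\widetilde{\Lambda}$ plus Lemma \ref{stable path}(3)), and the overall skeleton of your forward direction — a one-dimensional radical layer forces the two lifts into the same summand of a covering decomposition — is also the skeleton of the paper's proof. But the forward direction as you state it has a real problem: the claimed splitting $\mathscr{R}^{m}\Lambda(a,b)=\bigoplus_{\widetilde{b}'}\mathscr{R}^m\widetilde{\Lambda}(\widetilde{a},\widetilde{b}')$ is false for $m\geq 1$. Since $\widetilde{\Gamma}_{\Lambda}$ has no oriented cycles, $\widetilde{\Lambda}(\widetilde{x},\widetilde{x})\cong k$ for every object, so $\mathrm{rad}\,\widetilde{\Lambda}(\widetilde{x},\widetilde{x})=0$ and the bimodule radical filtration of $\widetilde{\Lambda}(\widetilde{a},\widetilde{b}')$ over its own endomorphism algebras collapses: $\mathscr{R}^1\widetilde{\Lambda}(\widetilde{a},\widetilde{b}')=0$. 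What you actually need is the weaker statement that the decomposition $\Lambda(a,b)\cong\bigoplus_{\widetilde{b}'}\widetilde{\Lambda}(\widetilde{a},\widetilde{b}')$ is graded-compatible with $\mathscr{R}^{\bullet}\Lambda(a,b)$, i.e.\ $\mathscr{R}^m\Lambda(a,b)=\bigoplus_{\widetilde{b}'}\bigl(\mathscr{R}^m\Lambda(a,b)\cap\widetilde{\Lambda}(\widetilde{a},\widetilde{b}')\bigr)$; your own sketch (off-diagonal description of $\mathrm{rad}\,\Lambda(a,a)$, induction on $m$) is aimed at this corrected statement, but you explicitly leave it as "the main obstacle" rather than proving it, and this is precisely the nontrivial content of the forward direction. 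You also need (and do not justify) that the composite of representatives $\alpha_i=F(\gamma_i)$ is concentrated in a single summand, which is what makes "the component of $v-cw$ in the summand $\widetilde{\Lambda}(\widetilde{a},\widetilde{b}_v)$ is exactly $\widetilde{v}$" legitimate.

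The paper sidesteps this entire compatibility issue by working one level up: instead of the bimodule filtration $\mathscr{R}^{\bullet}$ on $\Lambda(a,b)$, it uses the grade $g=g(a,b,n)$ and the radical filtration $\mathrm{rad}^{\bullet}_{I}$ of the category $I=\mathrm{ind}\Lambda$, for which the covering functor $E\colon k(\widetilde{\Gamma}_{\Lambda})\to I$ induces an isomorphism
$\bigoplus_{Ez=b^{*}}\mathrm{rad}^{g}_{k(\widetilde{\Gamma}_{\Lambda})}(\widetilde{a},z)/\mathrm{rad}^{g+1}_{k(\widetilde{\Gamma}_{\Lambda})}(\widetilde{a},z)\cong\mathrm{rad}^{g}_{I}(a^{*},b^{*})/\mathrm{rad}^{g+1}_{I}(a^{*},b^{*})$
as a standard, citable property of covering functors; Remark \ref{codimension} gives that the right-hand side is one-dimensional, so a unique $z$ carries both lifts. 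If you want to keep your route through $\mathscr{R}^{\bullet}$, you must supply the compatibility proof you only outline; otherwise, switch to the grade and $\mathrm{rad}^{\bullet}_{I}$, where the needed compatibility comes for free.
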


\begin{proof}
We only prove the case that $\widetilde{v}$ and $\widetilde{w}$ have the same source $\widetilde{a}$. Recall that the universal cover $\widetilde{\Lambda}$ of $\Lambda$ is given by the commutative diagram
$$\xymatrix@R=1.3pc{
	k(\widetilde{\Gamma}_{\Lambda})\ar[r]^{E} & I=\mathrm{ind}\Lambda \\
    \widetilde{\Lambda}\ar[r]^{F}\ar@{^(-_>}[u]^{incl.} &\Lambda\ar@{^(-_>}[u]_{(-)^{*}}, \\
}$$
where $\widetilde{\Gamma}_{\Lambda}$ is the universal cover of the Auslander-Reiten quiver $\Gamma_{\Lambda}$ of $\Lambda$ and $E,F$ are covering functors.

"$\Rightarrow$"
Let $v=\overline{\alpha_{n}}\cdots\overline{\alpha_{2}}\overline{\alpha_{1}}$ and $w=\overline{\beta_{m}}\cdots\overline{\beta_{2}}\overline{\beta_{1}}$, where each $\overline{\alpha_{i}}$ and each $\overline{\beta_{j}}$ is an arrow of $Q_{\Lambda}$. Let $\widetilde{v}$ be the path $$ \widetilde{a}=x_0\xrightarrow{\overline{\gamma_1}} x_1\xrightarrow{\overline{\gamma_2}}\cdots x_{n-1}\xrightarrow{\overline{\gamma_n}}x_n$$ and $\widetilde{w}$ be the path $$\widetilde{a}=y_0\xrightarrow{\overline{\delta_1}} y_1\xrightarrow{\overline{\delta_2}}\cdots y_{m-1}\xrightarrow{\overline{\delta_m}}y_m,$$  where each $\overline{\gamma_{i}}$ and each $\overline{\delta_{j}}$ is an arrow of $Q_{\widetilde{\Lambda}}$. For each $1\leq i\leq n$ (resp. $1\leq j\leq m$), choose a representative $\gamma_i\in\mathrm{rad}_{\widetilde{\Lambda}}(x_{i-1},x_i)-\mathrm{rad}^{2}_{\widetilde{\Lambda}}(x_{i-1},x_i)$ of $\overline{\gamma_{i}}$ (resp. a representative $\delta_j\in\mathrm{rad}_{\widetilde{\Lambda}}(y_{j-1},y_i)-\mathrm{rad}^{2}_{\widetilde{\Lambda}}(y_{j-1},y_j)$ of $\overline{\delta_{i}}$), then $\alpha_i=F(\gamma_i)$ (resp. $\beta_j=F(\delta_j)$) is a representative of $\overline{\alpha_i}$ (resp. $\overline{\beta_j}$), where $F:\widetilde{\Lambda}\rightarrow\Lambda$ is the covering functor. Since $(v,w)$ is a stable contour, $g(\alpha_n\cdots\alpha_2\alpha_1)=g(\beta_m\cdots\beta_2\beta_1)$. Let $g=g(\alpha_n\cdots\alpha_2\alpha_1)$.

Since $E:k(\widetilde{\Gamma}_{\Lambda})\rightarrow I$ is a covering functor, it induces an isomorphism $$ \bigoplus_{Ez=b^{*}}\mathrm{rad}^{g}_{k(\widetilde{\Gamma}_{\Lambda})}(\widetilde{a},z)/\mathrm{rad}^{g+1}_{k(\widetilde{\Gamma}_{\Lambda})}(\widetilde{a},z)\rightarrow
\mathrm{rad}^{g}_{I}(a^{*},b^{*})/\mathrm{rad}^{g+1}_{I}(a^{*},b^{*}). $$ By Remark \ref{codimension}, $\mathrm{dim}_{k}(\mathrm{rad}^{g}_{I}(a^{*},b^{*})/\mathrm{rad}^{g+1}_{I}(a^{*},b^{*}))=1$, so there exists a unique object $z$ of $k(\widetilde{\Gamma}_{\Lambda})$ such that $Ez=b^{*}$ and $\mathrm{rad}^{g}_{k(\widetilde{\Gamma}_{\Lambda})}(\widetilde{a},z)/\mathrm{rad}^{g+1}_{k(\widetilde{\Gamma}_{\Lambda})}(\widetilde{a},z)\neq 0$. Since $E(\gamma_n\cdots\gamma_1)=F(\gamma_n\cdots\gamma_1)^{*}=(\alpha_n\cdots\alpha_1)^{*}\in\mathrm{rad}^{g}_{I}(a^{*},b^{*})-\mathrm{rad}^{g+1}_{I}(a^{*},b^{*})$, $\gamma_n\cdots\gamma_1\in\mathrm{rad}^{g}_{k(\widetilde{\Gamma}_{\Lambda})}(\widetilde{a},x_n)-\mathrm{rad}^{g+1}_{k(\widetilde{\Gamma}_{\Lambda})}(\widetilde{a},x_n)$. Therefore $x_n=z$, and $y_m=z$ for the same reason. Moreover, since $\gamma_n\cdots\gamma_1\neq 0$ (resp. $\delta_m\cdots\delta_1\neq 0$), $\widetilde{v}$ (resp. $\widetilde{w}$) is a nonzero path of $Q_{\widetilde{\Lambda}}$.

"$\Leftarrow$"
Since $\widetilde{v}$ and $\widetilde{w}$ are nonzero paths of $\widetilde{\Lambda}$, $v$ and $w$ are stable. Let $\widetilde{v}$ be the path
$$ \widetilde{a}=x_0\xrightarrow{\overline{\gamma_1}} x_1\xrightarrow{\overline{\gamma_2}}\cdots x_{n-1}\xrightarrow{\overline{\gamma_n}}x_n=\widetilde{b}$$
and $\widetilde{w}$ be the path
$$\widetilde{a}=y_0\xrightarrow{\overline{\delta_1}} y_1\xrightarrow{\overline{\delta_2}}\cdots y_{m-1}\xrightarrow{\overline{\delta_m}}y_m=\widetilde{b},$$
where each $\overline{\gamma_{i}}$ and each $\overline{\delta_{j}}$ is an arrow of $Q_{\widetilde{\Lambda}}$. For each $1\leq i\leq n$ (resp. $1\leq j\leq m$), choose a representative $\gamma_i\in\mathrm{rad}_{\widetilde{\Lambda}}(x_{i-1},x_i)-\mathrm{rad}^{2}_{\widetilde{\Lambda}}(x_{i-1},x_i)$ of $\overline{\gamma_{i}}$ (resp. a representative $\delta_j\in\mathrm{rad}_{\widetilde{\Lambda}}(y_{j-1},y_i)-\mathrm{rad}^{2}_{\widetilde{\Lambda}}(y_{j-1},y_j)$ of $\overline{\delta_{i}}$), and let $\alpha_i=F(\gamma_i)$ (resp. $\beta_j=F(\delta_j)$), where $F:\widetilde{\Lambda}\rightarrow\Lambda$ is the covering functor. Since $\widetilde{\Lambda}$ is schurian and $\widetilde{v}$, $\widetilde{w}$ are nonzero paths of $\widetilde{\Lambda}$, there exists some $\lambda\in k^{*}$ such that $\delta_m\cdots\delta_1=\lambda\gamma_n\cdots\gamma_1$. Therefore $\beta_m\cdots\beta_1=\lambda\alpha_n\cdots\alpha_1$ and the stable paths $v$ and $w$ have the same level.

\end{proof}

\begin{Rem1}\label{length of stable contour}
If $(v,w)$ is a stable contour with $v\neq w$, then both $v$ and $w$ are paths of $Q_{\Lambda}$ of length $\geq 2$. The reason is as follows: by Lemma \ref{stable contour}, the pair $(v,w)$ lifts to a pair $(\widetilde{v},\widetilde{w})$ of nonzero paths of $Q_{\widetilde{\Lambda}}$ with same source $\widetilde{a}$ and same terminal $\widetilde{b}$. Suppose $\widetilde{\Lambda}\cong kQ_{\widetilde{\Lambda}}/I_{\widetilde{\Lambda}}$ for some admissible ideal $I_{\widetilde{\Lambda}}$ of $kQ_{\widetilde{\Lambda}}$. Since $\mathrm{dim}_{k}\widetilde{\Lambda}(\widetilde{a},\widetilde{b})=1$ and $\widetilde{v},\widetilde{w}$ are nonzero paths of $\widetilde{\Lambda}$, there exists some $\lambda\in k^{*}$ with $\widetilde{v}-\lambda\widetilde{w}\in I_{\widetilde{\Lambda}}$. Therefore both $\widetilde{v}$ and $\widetilde{w}$ are paths of length $\geq 2$, and the same thing is true for $v$ and $w$.
\end{Rem1}

\begin{Def}{\rm (\cite[Section 3.1]{Br-G})}\label{standard form of l.r.f. category}
The standard form $\overline{\Lambda}$ of a locally representation-finite category $\Lambda$ is the full subcategory of the mesh category $k(\Gamma_{\Lambda})$ of the Auslander-Reiten quiver $\Gamma_{\Lambda}$ of $\Lambda$ formed by projective vertices. Similarly, the standard form $\overline{A}$ of a basic representation-finite algebra $A$ is defined to be the opposite algebra of the algebra $\oplus_{p,q}k(\Gamma_{A})(p,q)$, where $p,q$ range over all projective vertices of $\Gamma_A$ (since we consider left $A$-modules, we use opposite algebra in the definition of the standard form of $A$).
\end{Def}

It can be shown that for a locally representation-finite category $\Lambda$ (resp. a basic representation-finite algebra $A$), $\overline{\Lambda}$ (resp. $\overline{A}$) is standard and the Auslander-Reiten quivers of $\Lambda$ and $\overline{\Lambda}$ (resp. $A$ and $\overline{A}$) are isomorphic (see \cite{BG}). For a locally representation-finite category $\Lambda$, the standard form $\overline{\Lambda}$ of $\Lambda$ is described as follows.

\begin{Prop}{\rm (\cite[Theorem 3.1]{Br-G})}\label{construction-of-standard-form}
Let $\Lambda$ be a locally representation-finite category. For any two objects $x$, $y$ of $\Lambda$, let $I_{\Lambda}(x,y)$ be the subspace of $kQ_{\Lambda}(x,y)$ which is generated by the non-stable paths and by the differences $v-w$, where $(v,w)$ ranges over the stable contours with source $x$ and terminal $y$. Then $I_{\Lambda}$ is an ideal of the path category $kQ_{\Lambda}$ and the standard form $\overline{\Lambda}$ of $\Lambda$ is isomorphic to $kQ_{\Lambda}/I_{\Lambda}$.
\end{Prop}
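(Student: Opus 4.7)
The plan is to produce a surjective $k$-linear functor $\phi \colon kQ_\Lambda \to \overline\Lambda$ whose kernel is exactly $I_\Lambda$; both conclusions follow, since the ideal property of $I_\Lambda$ is then automatic and $\phi$ induces the claimed isomorphism $\overline\Lambda \cong kQ_\Lambda/I_\Lambda$. Since $\overline\Lambda$ is standard and has Gabriel quiver $Q_\Lambda$, such a $\phi$ is obtained by lifting each arrow of $Q_\Lambda$ to a representative morphism in $\mathrm{rad}_{\overline\Lambda}$ and extending multiplicatively; surjectivity is immediate from the admissible presentation of $\overline\Lambda$. The substance lies in computing $\ker\phi$.

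I would route the computation through the universal cover. Because $\overline\Lambda$ is standard and shares its Auslander--Reiten quiver with $\Lambda$, the universal cover $\widetilde{\overline\Lambda}$ coincides with $\widetilde\Lambda$: both are the full subcategory of $k(\widetilde\Gamma_\Lambda)$ on the projective vertices. Galois covering theory then supplies a decomposition
\[ \overline\Lambda(x,y) \;\cong\; \bigoplus_{\bar F(\tilde y)=y} \widetilde\Lambda(\tilde x,\tilde y) \]
for a fixed lift $\tilde x$ of $x$, where $\bar F\colon \widetilde\Lambda\to \overline\Lambda$ is the corresponding covering functor. Choose $\phi$ compatibly with $\bar F$ by using the concrete morphisms of the mesh category $k(\widetilde\Gamma_\Lambda)$ as representatives for the arrows of $Q_\Lambda$; this choice removes scalar ambiguities in the schurian hom spaces of $\widetilde\Lambda$. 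Under this $\phi$, a path $p\colon x\to y$ maps into the summand indexed by the terminal $\tilde y_p$ of its unique lift $\tilde p$ from $\tilde x$.

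With this setup the kernel is transparent. A path $p$ of $Q_\Lambda$ satisfies $\phi(p)=0$ iff $\tilde p=0$ in $\widetilde\Lambda$, which by Lemma~\ref{stable path} is equivalent to $p$ being non-stable. For two paths $v,w$ from $x$ to $y$, the relation $\phi(v)=\phi(w)\neq 0$ forces their lifts from $\tilde x$ to share a common terminal in $\widetilde\Lambda$; schurianness together with the chosen representatives upgrades the resulting proportionality to an equality $\tilde v=\tilde w$, which by Lemma~\ref{stable contour} means exactly that $(v,w)$ is a stable contour. A general element of $\ker\phi$ splits into blocks indexed by the galois summands: the non-stable contributions already lie in $I_\Lambda$, while each remaining linear combination of mutually contour-related stable paths rewrites, via a telescoping argument, as a sum of differences $v-w$ over stable contours. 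This yields $\ker\phi = I_\Lambda$ and settles both assertions.

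The main obstacle I anticipate is this representative-normalization step. A priori two stable paths in a common contour class are only proportional in the schurian universal cover, so without care the natural relations in $\ker\phi$ would take the form $v-\lambda w$ rather than plain differences $v-w$. Working inside the mesh category model of $\widetilde\Lambda$, in which arrows of $\widetilde\Gamma_\Lambda$ are already concrete morphisms and mesh relations encode the correct equalities among compositions, is the device that upgrades proportionalities to equalities and makes the proposition's description of $I_\Lambda$ (as the subspace generated by non-stable paths together with unscaled differences) hold on the nose.
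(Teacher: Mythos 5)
The paper offers no proof of this statement: it is imported verbatim from Bretscher--Gabriel (\cite[Theorem 3.1]{Br-G}), so there is no in-paper argument to compare against. Judged on its own merits, your outline follows the natural covering-theoretic route (decompose $\overline{\Lambda}(x,y)$ along the Galois covering $\widetilde{\Lambda}\to\overline{\Lambda}$, read off non-stable paths from Lemma \ref{stable path} and contours from Lemma \ref{stable contour}, then telescope), and that skeleton is sound. But it contains a genuine gap exactly where you flag the ``main obstacle'': the claim that using ``the concrete morphisms of the mesh category'' as representatives for the arrows of $Q_{\Lambda}$ removes the scalar ambiguity. The arrows of $Q_{\Lambda}$ are not arrows of $\widetilde{\Gamma}_{\Lambda}$: a vertex of $Q_{\Lambda}$ is a projective vertex of the Auslander--Reiten quiver, and an arrow of $Q_{\Lambda}$ corresponds to a class in $\mathrm{rad}_{\widetilde{\Lambda}}/\mathrm{rad}^2_{\widetilde{\Lambda}}$ whose representative in $k(\widetilde{\Gamma}_{\Lambda})$ is in general a linear combination of several paths through non-projective vertices, with no canonical normalization. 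Even among genuine paths of $\widetilde{\Gamma}_{\Lambda}$ the mesh relations only force equalities up to signs (a mesh $\sum_i\beta_i\alpha_i=0$ with two middle terms gives $\beta_1\alpha_1=-\beta_2\alpha_2$), so the mesh category does not ``upgrade proportionalities to equalities'' for free.

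Concretely: once representatives $f_a$ for the arrows are fixed, each stable contour $(v,w)$ produces a scalar $\lambda(v,w)\in k^{*}$ with $\phi(v)=\lambda(v,w)\phi(w)$, and rescaling the $f_a$ changes $\lambda(v,w)$ by a multiplicative coboundary. The assertion that all these scalars can be made equal to $1$ simultaneously is a nontrivial normalization statement --- it is essentially equivalent to the standardness of $\overline{\Lambda}$, and the analogous normalization genuinely fails for the non-standard representation-finite self-injective algebras in characteristic $2$. This normalization is the mathematical content of Bretscher--Gabriel's Theorem 3.1 and occupies the bulk of their proof; your write-up asserts it rather than proves it. The remaining steps (kernel of $\phi$ on a single Galois summand, $\sum_i\lambda_i=0$ forcing a telescoping into differences $v_i-v_1$) are fine, but they all sit downstream of the unproved normalization.
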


We abbreviate indecomposable, basic, representation-finite self-injective algebra over $k$ (not isomorphic to the underlying field $k$) by RFS algebra.

\begin{Lem}\label{socle paths become stable contour}
Let $A$ be an RFS algebra, $\overline{A}=kQ_{A}/I_{A}$ be the standard form of $A$, $v$, $w$ be two paths of $Q_{A}$ with the same source (resp. terminal) which belong to $\mathrm{soc}\overline{A}-\{0\}$. Then $(v,w)$ is a stable contour of $A$.
\end{Lem}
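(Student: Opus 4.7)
The plan is to apply Lemma \ref{stable contour} by lifting $v$ and $w$ to the universal cover $\widetilde{A}$ and checking the endpoint condition there. Since $A$ and $\overline{A}$ share the same Auslander--Reiten quiver, they share the same universal cover, and both the notion of stable path and the notion of stable contour are intrinsic combinatorial data of this shared object; in particular $(v,w)$ is a stable contour of $A$ if and only if it is a stable contour of $\overline{A}$, so it suffices to work entirely inside $\overline{A}$.

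First I would observe that $v$ and $w$ are stable paths of $Q_{A}$. By Proposition \ref{construction-of-standard-form}, the ideal $I_{A}$ is generated by non-stable paths together with differences $v'-w'$ indexed by stable contours, so every non-stable path has zero image in $\overline{A}$. Since $v$ and $w$ are nonzero in $\overline{A}$ by hypothesis, each must be stable.

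Next, I would exploit the self-injectivity of $\overline{A}$: its projective--injective vertices are determined by the AR-quiver shared with $A$, so $\overline{A}$ is itself a (standard) RFS algebra. Consequently $\mathrm{soc}(e_{x}\overline{A})$ is simple for every vertex $x$, supported at a single vertex $\nu(x)$ (the Nakayama permutation). When $v,w\in\mathrm{soc}\overline{A}-\{0\}$ share the source $x$, they both lie in this one-dimensional space, so they have a common terminal $y=\nu(x)$ and $v=\lambda w$ in $\overline{A}$ for some $\lambda\in k^{*}$. The ``resp.\ terminal'' case is entirely symmetric, using the left-module socle $\mathrm{soc}(\overline{A}e_{y})$.

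To finish, I would fix a vertex $\tilde{x}$ of $\widetilde{A}$ over $x$ and let $\tilde{v},\tilde{w}$ be the unique lifts of $v,w$ with source $\tilde{x}$. By Lemma \ref{stable path} both lifts are nonzero in $\widetilde{A}=\widetilde{\overline{A}}$ (this is exactly where stability is used), terminating at some vertices $\tilde{y}_{v}$ and $\tilde{y}_{w}$ over $y$. The Galois covering $F:\widetilde{\overline{A}}\to\overline{A}$ supplied by the standardness of $\overline{A}$ then yields a direct sum decomposition
$$\overline{A}(x,y)\;\cong\;\bigoplus_{\tilde{y}\text{ over }y}\widetilde{\overline{A}}(\tilde{x},\tilde{y}),$$
under which $v$ and $w$ have nonzero contributions precisely in the summands indexed by $\tilde{y}_{v}$ and $\tilde{y}_{w}$. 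The equality $v=\lambda w$ with $\lambda\in k^{*}$ then forces $\tilde{y}_{v}=\tilde{y}_{w}$, and Lemma \ref{stable contour} concludes that $(v,w)$ is a stable contour. The delicate points to nail down are the self-injectivity of $\overline{A}$ and the Galois-covering decomposition above; both rest on standard results of Bretscher--Gabriel and Riedtmann for RFS algebras, and after those are in hand the argument is essentially bookkeeping.
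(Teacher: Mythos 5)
Your proposal is correct, and its first half is exactly the paper's argument: non-stable paths lie in $I_{A}$ so nonzero images in $\overline{A}$ force $v$ and $w$ to be stable, and self-injectivity of $\overline{A}$ (simple socles of the indecomposable projectives) gives the common terminal and the proportionality $w=\lambda v$ with $\lambda\in k^{*}$. Where you diverge is the last step. The paper finishes in one line: since $w-\lambda v\in I_{A}$ and, by Proposition \ref{construction-of-standard-form}, $I_{A}(x,y)$ is spanned by non-stable paths together with differences $v'-w'$ over stable contours, the stability of $v$ and $w$ forces $\lambda=1$ and $(v,w)$ to be a stable contour. You instead pass to the universal cover: you identify $\widetilde{A}$ with $\widetilde{\overline{A}}$ via the shared Auslander--Reiten quiver, lift $v$ and $w$ to nonzero paths, use the covering decomposition of $\overline{A}(x,y)$ to show that proportionality forces the lifts to share their terminal, and then invoke Lemma \ref{stable contour}. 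Both routes are valid. The paper's is shorter because it leans on the already-proved description of $I_{A}$; yours is more self-contained in that it makes explicit \emph{why} proportionality of two stable paths in $\overline{A}$ yields a stable contour of $A$ (a point the paper compresses into ``by the definition of $I_{A}$''), at the cost of having to justify that the Galois covering of the standard category $\overline{A}$ is induced by a quiver covering so that each path lands in a single summand of the decomposition --- a fact you correctly flag as resting on Bretscher--Gabriel. In effect you are re-running, for this special case, the argument the paper packaged once and for all inside its proof of Lemma \ref{stable contour}.
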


\begin{proof}
Since $\overline{A}$ is self-injective and $v$, $w\in\mathrm{soc}\overline{A}-\{0\}$, they also have the same terminal. Since $v$ and $w$ are nonzero in $\overline{A}$, they are stable paths. Since $v$ and $w$ belong to $\mathrm{soc}\overline{A}-\{0\}$, which have the same source, there exists some $\lambda\in k^{*}$ such that $w=\lambda v$ in $\overline{A}$. By the definition of $I_{A}$, we imply that $\lambda=1$ and $(v,w)$ is a stable contour of $A$.
\end{proof}

\begin{Lem}\label{socle paths}
Let $A$ be an RFS algebra of Loewy length $>2$, $\overline{A}=kQ_{A}/I_{A}$ be the standard form of $A$. Let $\alpha_n\cdots\alpha_2\alpha_1$ be a path of $Q_{A}$ which belongs to $\mathrm{soc}\overline{A}-\{0\}$, $p$ (resp. $q$) be a path of $Q_{A}$ such that $p\alpha_n\cdots\alpha_2$ (resp. $\alpha_{n-1}\cdots\alpha_1 q$) belongs to $\mathrm{soc}\overline{A}-\{0\}$, then $l(p)=1$ (resp. $l(q)=1$) and $p$ (resp. $q$) is unique.
\end{Lem}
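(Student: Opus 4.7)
The plan is to prove the assertion for $p$; the assertion for $q$ follows by applying the identical argument to the opposite algebra $A^{\mathrm{op}}$, whose standard form is $\overline{A}^{\mathrm{op}}$. Write $s, s_1, t, t'$ for the source of $\alpha_1$, the target of $\alpha_1$, the target of $\alpha_n$ and the target of $p$, respectively. Since $\alpha_n\cdots\alpha_1$ and $p\alpha_n\cdots\alpha_2$ are nonzero socle elements of $\overline{A}$ living in $e_t\overline{A}e_s$ and $e_{t'}\overline{A}e_{s_1}$, they generate the one-dimensional simple socles of the indecomposable projectives $\overline{A}e_s$ and $\overline{A}e_{s_1}$; in particular $t=\nu(s)$ and $t'=\nu(s_1)$ for the Nakayama permutation $\nu$ of $\overline{A}$.

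To show $l(p)=1$, let $\ell_v$ denote the maximal length of a nonzero path of $\overline{A}$ starting at a vertex $v$, so that $\ell_v+1$ is the Loewy length of $\overline{A}e_v$ and a path starting at $v$ represents an element of $\mathrm{soc}\overline{A}-\{0\}$ exactly when its length equals $\ell_v$. The hypotheses therefore translate to $n=\ell_s$ and $l(p)+(n-1)=\ell_{s_1}$, giving $l(p)=\ell_{s_1}-\ell_s+1$. The crucial point is that for the standard form of an RFS algebra, $\ell_v$ is independent of $v$: this is a structural consequence of Riedtmann's description of RFS algebras as orbit categories of mesh categories of stable translation quivers $\Z\Delta/G$, in which socle generators at distinct projective vertices all have a common length. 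Granted this, $\ell_{s_1}=\ell_s$ and $l(p)=1$.

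For uniqueness, suppose $p_1, p_2$ are two arrows from $t$ to $t'$ satisfying the hypothesis. Since $\mathrm{soc}(\overline{A}e_{s_1})$ is one-dimensional, there exists $\mu\in k^*$ with $p_1\alpha_n\cdots\alpha_2=\mu\,p_2\alpha_n\cdots\alpha_2$ in $\overline{A}$. Fix a lift $\widetilde{s}_1$ of $s_1$ in the universal cover $\widetilde{\overline{A}}$, let $\widetilde{\alpha_n\cdots\alpha_2}$ be its unique lift starting at $\widetilde{s}_1$ (ending at a lift $\widetilde t$ of $t$), and let $\widetilde{p_i}$ be the unique lift of $p_i$ starting at $\widetilde t$. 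Under the covering-functor isomorphism
\[\overline{A}(s_1,t')\;\cong\;\bigoplus_{F(\widetilde{t'})=t'}\widetilde{\overline{A}}(\widetilde{s}_1,\widetilde{t'}),\]
each element $\widetilde{p_i}\widetilde{\alpha_n\cdots\alpha_2}$ is the unique nonzero component of $p_i\alpha_n\cdots\alpha_2$, sitting in the summand indexed by the target of $\widetilde{p_i}$; the proportionality in $\overline{A}$ then forces these two targets to coincide. The schurian property of $\widetilde{\overline{A}}$ yields $\widetilde{p_1}=\lambda\widetilde{p_2}$ for some $\lambda\in k^*$, and projecting gives $p_1=\lambda p_2$ in $\overline{A}$. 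If $p_1\neq p_2$ as paths of $Q_A$, then by Proposition~\ref{construction-of-standard-form} the difference $p_1-\lambda p_2$ would be a nonzero linear combination of stable-contour differences $v-w$; but by Remark~\ref{length of stable contour} every such $v,w$ has length $\geq 2$, which is incompatible with $p_1,p_2$ being arrows. Hence $p_1=p_2$.

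The main obstacle is the constancy of $\ell_v$ underlying $l(p)=1$; this is precisely where the representation-finite self-injective hypothesis enters, rather than mere self-injectivity. If one prefers to avoid invoking Riedtmann's classification directly, an alternative route is to lift $\alpha_n\cdots\alpha_1$ to a nonzero path in $\widetilde{\overline{A}}$ and use the Nakayama functor on the universal cover to transport its length-maximality from $\widetilde s$ to every lift of $s_1$, thereby establishing $\ell_{s_1}=n=\ell_s$ internally.
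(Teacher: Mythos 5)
The uniqueness half of your argument is essentially correct and is in substance the paper's: $(p_1\alpha_n\cdots\alpha_2,\,p_2\alpha_n\cdots\alpha_2)$ is a stable contour by Lemma \ref{socle paths become stable contour}, cancelling the common factor via Lemma \ref{stable contour} shows $(p_1,p_2)$ is a stable contour, and Remark \ref{length of stable contour} forces $p_1=p_2$. The proof that $l(p)=1$, however, rests on two claims that are both false. It is not true that a path starting at $v$ lies in $\mathrm{soc}\overline{A}-\{0\}$ exactly when its length equals $\ell_v$: the ideal $I_{A}$ contains differences $v-w$ of stable contours whose two members may have different lengths, so one socle element can be represented by paths of several lengths. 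And $\ell_v$ is not constant on $(Q_A)_0$. Both fail already for the Brauer tree algebra on the tree with edges $1,2,3$ at a central vertex and an edge $4$ attached to the other endpoint of edge $1$ (all multiplicities $1$): this is a standard RFS algebra equal to its own standard form, $P_4$ is uniserial of Loewy length $3$ while $P_2$ is uniserial of Loewy length $4$, so $\ell_4=2\neq 3=\ell_1$, and $\mathrm{soc}P_1$ is spanned by a path of length $2$ (the cycle around the valency-two vertex) which equals the length-$3$ cycle around the central vertex. Taking $\alpha_2\alpha_1$ to be the socle path of $P_4$, your formula $l(p)=\ell_{s_1}-\ell_s+1$ would give $l(p)=2$, while the actual $p$ is a single arrow. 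The fallback you sketch (transporting length-maximality through the universal cover by the Nakayama functor) cannot rescue this, since the statement to be transported is false.

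The paper's proof of $l(p)=1$ avoids lengths entirely. Supposing $l(p)\geq 2$ and writing $p=\alpha_{n+l}\cdots\alpha_{n+1}$, the path $\alpha_{n+1}\alpha_n\cdots\alpha_2$ is nonzero but not in the socle, hence (by self-injectivity) it extends on the right to a socle path; one further right-extension yields a socle path $\alpha_n\cdots\alpha_2 w$ with $l(w)\geq 2$ having the same terminal as $\alpha_n\cdots\alpha_2\alpha_1$. Lemmas \ref{socle paths become stable contour} and \ref{stable contour} then exhibit $(\alpha_1,w)$ as a stable contour with $\alpha_1\neq w$, contradicting Remark \ref{length of stable contour}. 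You would need an argument of this kind to establish $l(p)=1$; as written, that part of your proof does not work.
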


\begin{proof}
Since $\alpha_n\cdots\alpha_2\notin\mathrm{soc}\overline{A}$, $l(p)\geq 1$. Suppose $l(p)\geq 2$, write $p=\alpha_{n+l}\cdots\alpha_{n+1}$ where $l\geq 2$. Since $\alpha_{n+1}\alpha_n\cdots\alpha_2\notin\mathrm{soc}\overline{A}$, there exists arrows $\alpha'_1$, $\alpha'_0$, $\cdots$, $\alpha'_{2-k}$ ($k\geq 1$) of $Q_{A}$ such that $\alpha_{n+1}\alpha_n\cdots\alpha_2\alpha'_1\cdots\alpha'_{2-k}\in\mathrm{soc}\overline{A}-\{0\}$. Similarly, since $\alpha_n\cdots\alpha_2\alpha'_1\cdots\alpha'_{2-k}\notin\mathrm{soc}\overline{A}$, there exists arrows $\alpha'_{1-k}$, $\cdots$, $\alpha'_{2-k-l}$ ($l\geq 1$) of $Q_{A}$ such that $\alpha_n\cdots\alpha_2\alpha'_1\cdots\alpha'_{2-k-l}\in\mathrm{soc}\overline{A}-\{0\}$. According to Lemma \ref{socle paths become stable contour}, the pair of paths $(\alpha_n\cdots\alpha_2\alpha_1,\alpha_n\cdots\alpha_2\alpha'_1\cdots\alpha'_{2-k-l})$ is a stable contour of $A$. By Lemma \ref{stable contour}, $(\alpha_1,\alpha'_1\cdots\alpha'_{2-k-l})$ is also a stable contour of $A$. Since $l(\alpha'_1\cdots\alpha'_{2-k-l})\geq 2$, $\alpha_1\neq\alpha'_1\cdots\alpha'_{2-k-l}$. But it contradicts to Remark \ref{length of stable contour}.

To show $p$ is unique, let $\alpha_{n+1}$, $\alpha'_{n+1}$ be two arrows of $Q_{A}$ such that both $\alpha_{n+1}\alpha_n\cdots\alpha_2$ and $\alpha'_{n+1}\alpha_n\cdots\alpha_2$ belong to $\mathrm{soc}\overline{A}-\{0\}$. Similarly, we can show that $(\alpha_{n+1},\alpha'_{n+1})$ is a stable contour of $A$. Then by Remark \ref{length of stable contour} we have $\alpha_{n+1}=\alpha'_{n+1}$.
\end{proof}

Let $A$ be an RFS algebra of Loewy length $>2$ with the standard form $\overline{A}=kQ_{A}/I_{A}$. For each path $v=\alpha_n\cdots\alpha_2\alpha_1$ of $Q_{A}$ which belongs to $\mathrm{soc}\overline{A}-\{0\}$, let $\alpha_{n+1}$ (resp. $\alpha_0$) be the unique arrow of $Q_{A}$ such that $\alpha_{n+1}\alpha_n\cdots\alpha_2\in\mathrm{soc}\overline{A}-\{0\}$ (resp. $\alpha_{n-1}\cdots\alpha_1\alpha_0\in\mathrm{soc}\overline{A}-\{0\}$). Define $v[1]=\alpha_{n+1}\alpha_n\cdots\alpha_2$ and $v[-1]=\alpha_{n-1}\cdots\alpha_1\alpha_0$. Define $v[m]=(v[m-1])[1]$ and $v[-m]=(v[1-m])[-1]$ for each $m>1$ inductively. Let $$E=\{e_v\mid v\mbox{ is a path of }Q_{A}\mbox{ which belongs to }\mathrm{soc}\overline{A}-\{0\}\}$$ be a set, define the $G$-set structure on $E$ by setting $g^{m}\cdot e_v=e_{v[m]}$ for each $e_v\in E$ and for each $m\in\mathbb{Z}$, where $G=\langle g\rangle$ is an infinite cyclic group. For each path $v$ of $Q_{A}$ which belongs to $\mathrm{soc}\overline{A}-\{0\}$, let $\alpha_v$ be the initial arrow of $v$. Define two partitions $P,L$ on $E$ as follows: for every $e_v\in E$, $P(e_v)=\{e_w\mid s(v)=s(w)\}$ and $L(e_v)=\{e_w\mid \alpha_v=\alpha_w\}$, where $s(p)$ denotes the source of the path $p$. Define a function $d:E\rightarrow\mathbb{Z}_{+}$ by setting $d(e_v)$ to be the length $l(v)$ of $v$.

\begin{Prop}\label{A-is-a-f-s-BCA}
Let $A$ be an RFS algebra of Loewy length $>2$ with the standard form $\overline{A}=kQ_{A}/I_{A}$ and let $E=(E,P,L,d)$ be defined as above. Then $E$ is a finite $f_s$-BC and the opposite algebra $(A_E)^{op}$ of the corresponding $f_s$-BCA $A_{E}$ is isomorphic to $\overline{A}$.
\end{Prop}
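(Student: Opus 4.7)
The plan is to prove the statement in two parts: first verify that $E$ satisfies all seven axioms of a $f_s$-BC, then exhibit the algebra isomorphism $\overline{A}\cong A_E$. The finiteness of $E$ is immediate since $\overline{A}$ is finite-dimensional. Axioms $(f1)$--$(f3)$ reduce to elementary observations: $L(e_v)\subseteq P(e_v)$ because two socle paths with the same initial arrow share a source; $s(v[1])=t(\alpha_v)$ depends only on $\alpha_v$; and $l(v[1])=l(v)$ by the construction in Lemma \ref{socle paths}. Axiom $(f4)$ follows from the Nakayama permutation $\nu$ of the self-injective algebra $\overline{A}$, since every socle path from $x$ ends at $\nu(x)$, and $t(v)$ is precisely the source of $v[l(v)]$.

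The technical core lies in $(f5)$--$(f7)$, and the unifying tool is that the standard form $\overline{A}$ satisfies the cancellation condition $(C)$ of Mart\'inez-Villa--De La Pe\~na. For $(f5)$, given two socle paths $v_1,v_2$ beginning with the same arrow $\alpha$, I use that all socle paths from a common source have the same length (they sit in the bottom Loewy layer of the corresponding indecomposable projective) to see that the tails after removing $\alpha$ form a stable contour and coincide in $\overline{A}$; one-dimensionality of the socle then forces the continuing arrows chosen by Lemma \ref{socle paths} to yield equal socle elements, and condition $(C)$ forces those arrows themselves to coincide. Axiom $(f6)$ is by contradiction: if a socle path $v$ were a proper subword of a socle path $w$, then some arrow of $w$ would multiply $v$ on the left or on the right to produce a nonzero subpath of $w$, but $v$ lies in the two-sided socle of the self-injective algebra $\overline{A}$ and is therefore annihilated by all arrows from either side, a contradiction. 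For $(f7)$, I use the reformulation $(f7')$ of Lemma \ref{the-condition-equivalent-to-(f7)}: the relation $L(p')RL(p)$ translates via $(C)$ into equality of the underlying stable paths in $\overline{A}$, so given a socle extension $\tilde{v}_p=\gamma^{(p)}\cdot v_p$, the product $\gamma^{(p)}\cdot v_{p'}$ is equal to $\tilde{v}_p$ in $\overline{A}$ and hence is itself a socle path, furnishing a standard sequence $p''$ with $L(p'')=L(p')$ and $\prescript{\wedge}{}{p''}\equiv\prescript{\wedge}{}{p}$.

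For Part 2, define $\Phi:kQ_A\to A_E$ on vertices by $x\mapsto P(e_v)$ (for any socle path $v$ from $x$) and on arrows by $\alpha\mapsto L(e_v)$ (for any socle path $v$ starting with $\alpha$); both are well-defined because $P(e_v)$ depends only on $s(v)$ and $L(e_v)$ only on $\alpha_v$. To verify that $\Phi$ annihilates the generators of $I_A$: non-stable paths admit no socle extension, so their images in $kQ_E$ are not standard sequences and vanish as relations of type $(fR2)$; and for a stable contour $(v,w)$, any socle extension $\gamma v$ produces the socle extension $\gamma w$ of $w$ (since $v=w$ in $\overline{A}$ gives $\gamma v=\gamma w$ nonzero), so the corresponding standard sequences are $R$-equivalent and $\Phi(v)=\Phi(w)$ in $A_E$. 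The induced functor $\bar\Phi:\overline{A}\to A_E$ is surjective since every arrow of $Q_E$ is $L(e_v)$ for some $v$; for injectivity, compare bases, using Proposition \ref{a basis of J} (a basis of $A_E$ indexed by $R$-classes in $\mathscr{E}$) and Proposition \ref{construction-of-standard-form} (a basis of $\overline{A}$ indexed by stable paths modulo stable contour), the required bijection between index sets being supplied by the analysis carried out in Part~1.

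The main technical obstacle is the verification of $(f7)$ together with the well-definedness step for $\bar\Phi$: both require exhibiting a compatible socle-path extension once two stable paths are identified in $\overline{A}$, and the cancellation condition $(C)$ is the essential tool. A subsidiary subtlety is that the two-sided character of the socle in the self-injective algebra $\overline{A}$ is used in the subword argument for $(f6)$.
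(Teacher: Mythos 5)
Your overall strategy coincides with the paper's: verify $(f1)$--$(f7)$ using the stable-contour machinery for the standard form, then match the presentation $kQ_{A}/I_{A}$ with $(Q_E,I_E)$ via the correspondence between socle paths and angles. One supporting claim, however, is false. In your verification of $(f5)$ you assert that all socle paths emanating from a common source have the same length because ``they sit in the bottom Loewy layer of the corresponding indecomposable projective''. Lying in the socle fixes the \emph{level} of a path, not its \emph{length}: the paper's own closing example (the RFS algebra of type $(D_{3m},\frac{1}{3},1)$) has socle paths $\beta^{3}$ and $\alpha_m\cdots\alpha_1\beta$, of lengths $3$ and $m+1$, starting at the same vertex, and this disparity of lengths is exactly what makes the f-degree fractional. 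Fortunately the claim is not needed: what you actually require is that the common first arrow can be cancelled from the stable contour $(v_1,v_2)$ so that the tails $v_1',v_2'$ again form a stable contour and hence coincide in $\overline{A}$. That is precisely Lemma \ref{stable contour}, proved by lifting to the universal cover; it is also the correct justification for the cancellation condition $(C)$ that you invoke, since \cite[Corollary 3.9]{MV-DLP} only guarantees that \emph{some} presentation of a standard category satisfies $(C)$, not that the stable-contour presentation $kQ_{A}/I_{A}$ does. With this substitution your arguments for $(f5)$, $(f7')$ and the well-definedness of $\bar\Phi$ all go through and agree with the paper's proof.

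Two smaller points. First, in Part 2 a non-stable path of $Q_{A}$ need not map to a relation of type $(fR2)$: its preimage in $kQ_E$ can be of the form $L(e_{v[n-1]})\cdots L(e_{v[1]})L(e_v)$ with $n>d(e_v)$, i.e.\ a relation of type $(fR3)$; the paper treats both cases. Second, $(f5)$ is an ``if and only if'' condition, so the symmetric statement for $\sigma^{-1}$ must also be recorded (as the paper does). Neither point affects the viability of the argument, but both should be filled in.
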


\begin{proof}
{\it Step 1: To show that $E=(E,P,L,d)$ is a finite $f_s$-BC.}

$(f1)$ Since $I_{A}$ is an admissible ideal and $Q_{A}$ is finite, the number of nonzero paths of $Q_{A}$ in $A$ is finite. Therefore $E$ and each class $P(e_v)$ are finite sets. By definition we have $L(e_v)\subseteq P(e_v)$ for any $e_v\in E$.

$(f2)$ If $L(e_v)=L(e_w)$, then $\alpha_v=\alpha_w$, therefore $s(v[1])=t(\alpha_v)=t(\alpha_w)=s(w[1])$ and $P(g\cdot e_v)=P(e_{v[1]})=P(e_{w[1]})=P(g\cdot e_w)$.

$(f3)$ Since $l(v)=l(v[n])$ for any path $v$ which belongs to $\mathrm{soc}\overline{A}-\{0\}$ and for any $n\in\mathbb{Z}$, $d(e_v)=d(e_w)$ for any $e_v$, $e_w\in E$ which belong to the same $\langle g\rangle$-orbit.

$(f4)$ If $P(e_v)=P(e_w)$, then $s(v)=s(w)$. Since $A$ is self-injective and $v$, $w\in\mathrm{soc}\overline{A}-\{0\}$, $v$ and $w$ have the same terminal. Therefore $v[l(v)]$ and $w[l(w)]$ have the same source. Since $\sigma(e_v)=e_{v[l(v)]}$ (resp. $\sigma(e_w)=e_{w[l(w)]}$), $P(\sigma(e_v))=P(\sigma(e_w))$. Similarly we have $P(\sigma^{-1}(e_v))=P(\sigma^{-1}(e_w))$.

$(f5)$ If $L(e_v)=L(e_w)$, then $\alpha_v=\alpha_w$. Let $v=v'\alpha_v$ and $w=w'\alpha_w$. By Lemma \ref{socle paths become stable contour}, $(v,w)$ is a stable contour of $A$, and by Lemma \ref{stable contour}, $(v',w')$ is also a stable contour of $A$. Let $\beta$ be the arrow of $Q_{A}$ such that $\beta v'\in\mathrm{soc}\overline{A}-\{0\}$. Since $(v',w')$ is a stable contour of $A$, $v'-w'\in I_{A}$. Therefore $v'=w'$ in $\overline{A}$, and $\beta w'=\beta v'\in\mathrm{soc}\overline{A}-\{0\}$. Thus $v[1]=\beta v'$ and $w[1]=\beta w'$. Since $\alpha_{v[l(v)]}=\beta=\alpha_{w[l(w)]}$, $L(\sigma(e_v))=L(\sigma(e_w))$. Similarly, we have $L(\sigma^{-1}(e_v))=L(\sigma^{-1}(e_w))$.

$(f6)$ For $e_v$, $e_w\in E$, if $d(e_v)\leq d(e_w)$ and $L(e_{v[i]})=L(e_{w[i]})$ for each $0\leq i<d(e_v)$, then $w=w'v$ for some path $w'$ of $Q_{A}$. Since $v$, $w\in\mathrm{soc}\overline{A}-\{0\}$, we have $w=v$.

$(f7)$ Note that each standard sequence $(g^{m-1}\cdot e_v,\cdots,g\cdot e_v,e_v)$ (where $v=\alpha_n\cdots\alpha_2\alpha_1$ and $0\leq m\leq l(v)$) of $E$ can be regarded as a pair of paths $(v'',v')$ of $Q_{A}$ such that $v'=\alpha_m\cdots\alpha_2\alpha_1$, $v''=\alpha_n\cdots\alpha_{m+2}\alpha_{m+1}$, $v''v'=v$. Conversely, each pair of paths $(v'',v')$ of $Q_{A}$ with $v''v'\in\mathrm{soc}\overline{A}-\{0\}$ corresponds to a standard sequence $(g^{m-1}\cdot e_v,\cdots,g\cdot e_v,e_v)$ of $E$, where $v=v''v'$ and $m=l(v')$. Therefore we may consider a standard sequence of $E$ as such a pair. Two standard sequences $(v'',v')$ and $(w'',w')$ of $E$ are identical if and only if $v'=w'$. Therefore for each standard sequence $(v'',v')$ of $E$, we have $[\prescript{\wedge}{}{(v'',v')}]=\{(w,v'')\mid wv''\in\mathrm{soc}\overline{A}-\{0\}\}$. For identical standard sequences $(u,v)$, $(w,v)$ of $E$, we have $[\prescript{\wedge}{}{(u,v)}]^{\wedge}=\{(u,v')\mid uv'\in\mathrm{soc}\overline{A}-\{0\}\}$ and $[[\prescript{\wedge}{}{(u,v)}]^{\wedge}]=\{(u',v')\mid uv',u'v'\in\mathrm{soc}\overline{A}-\{0\}\}$. Also we have $[[\prescript{\wedge}{}{(w,v)}]^{\wedge}]=\{(w',v')\mid wv',w'v'\in\mathrm{soc}\overline{A}-\{0\}\}$. By Lemma \ref{socle paths become stable contour}, $(uv,wv)$ is a stable contour of $A$, then by Lemma \ref{stable contour} $(u,w)$ is also a stable contour of $A$. Therefore $u=w$ in $\overline{A}$, and $uv'\in\mathrm{soc}\overline{A}-\{0\}$ if and only if $wv'\in\mathrm{soc}\overline{A}-\{0\}$. Thus $[[\prescript{\wedge}{}{(u,v)}]^{\wedge}]=[[\prescript{\wedge}{}{(w,v)}]^{\wedge}]$.

\medskip
{\it Step 2: To show $\overline{A}\cong (A_{E})^{op}$.}

Define a quiver morphism $f:Q_E\rightarrow Q_{A}$ which maps each vertex $P(e_v)$ to $s(v)$ and maps each arrow $L(e_v)$ to $\alpha_v$. It is straightforward to show that $f$ is a quiver isomorphism. To show $f$ induces an isomorphism between $(A_{E})^{op}=kQ_E/I_E$ and $\overline{A}$, it suffices to show that $\widetilde{f}(I_E)=I_{A}$, where $\widetilde{f}$ is the isomorphism $kQ_E\rightarrow kQ_{A}$ of path categories induced by $f$.

If $L(e_{v[m-1-k]})\cdots L(e_{v[1]})L(e_v)-L(e_{w[m'-1-k]})\cdots L(e_{w[1]})L(e_w)$ is a relation of $I_E$ of type $(fR1)$, where $s(v)=s(w)$, $m=l(v)$, $m'=l(w)$, and $L(e_{v[m-i]})=L(e_{w[m'-i]})$ for $1\leq i\leq k$, then we may assume that $v=pq$ and $w=pq'$, where $p$ is a path of length $k$. By Lemma \ref{socle paths become stable contour} and Lemma \ref{stable contour}, $(q,q')$ is a stable contour of $A$. Therefore $\widetilde{f}(L(e_{v[m-1-k]})\cdots L(e_{v[1]})L(e_v)-L(e_{w[m'-1-k]})\cdots L(e_{w[1]})L(e_w))=q-q'\in I_{A}$.

If $L(e_{v_n})\cdots L(e_{v_2})L(e_{v_1})$ is a relation of $I_E$ of type $(fR2)$, then the path \\ $p=f(L(e_{v_n})\cdots L(e_{v_2})L(e_{v_1}))=\alpha_{v_n}\cdots\alpha_{v_2}\alpha_{v_1}$ of $Q_{A}$ is not a subpath of any path $v$ of $Q_{A}$ which belongs to $\mathrm{soc}\overline{A}-\{0\}$. Therefore $p$ is non-stable and it belongs to $I_{A}$.

If $L(e_{v[n]})\cdots L(e_{v[1]})L(e_v)$ is a relation of $I_E$ of type $(fR3)$, where $n\geq l(v)$, then \\ $f(L(e_{v[n]})\cdots L(e_{v[1]})L(e_v))$ is a path of $Q_{A}$ which contains $v$ as a proper subpath. Therefore $f(L(e_{v[n]})\cdots L(e_{v[1]})L(e_v))=0$ in $\overline{A}$. So we have $\widetilde{f}(I_E)\subseteq I_{A}$.

For any non-stable path $p$ of $A$, we have $p=0$ in $\overline{A}$. Then $p$ is not a subpath of any path of $Q_A$ which belongs to $\mathrm{soc}\overline{A}-\{0\}$. Let $q$ be the preimage of $p$ under $\widetilde{f}$. If $q$ is not a relation of $I_E$ of type $(fR2)$, then $q=L(e_{v[n-1]})\cdots L(e_{v[1]})L(e_v)$ for some $e_v\in E$ and some $n>0$. Since $p$ is not a subpath of $v$, $n>l(v)$. Therefore $q$ is a relation of $I_E$ of type $(fR3)$.

For any stable contour $(v,w)$ of $A$, since $v\neq 0$ in $\overline{A}$, there exists some path $u$ of $Q_{A}$ such that $uv\in\mathrm{soc}\overline{A}-\{0\}$. Since $v=w$ in $\overline{A}$, $wv\in\mathrm{soc}\overline{A}-\{0\}$. Let $p=uv$, $q=uw$. Then $L(e_{p[m-k-1]})\cdots L(e_{p[1]})L(e_p)-L(e_{q[m'-k-1]})\cdots L(e_{q[1]})L(e_q)$ is a relation of $I_E$ of type $(fR1)$, where $m=l(p)$, $m'=l(q)$, $k=l(u)$, which is the preimage of $v-w$ under $\widetilde{f}$. Therefore the ideal $I_A$ of $Q_A$ is contained in $\widetilde{f}(I_E)$.
\end{proof}

\begin{Thm}\label{RFS-algebra=r.f.fsBCA}
The class of standard RFS algebras is equal to the class of finite-dimensional indecomposable representation-finite $f_s$-BCAs.
\end{Thm}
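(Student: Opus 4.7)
The plan is to establish two inclusions. For the easy direction ($f_s$-BCA $\Rightarrow$ standard RFS), let $A = A_E$ be a finite-dimensional indecomposable representation-finite $f_s$-BCA. The algebra $A$ is basic because $\Lambda_E$ has pairwise non-isomorphic objects (Theorem \ref{f-BC algebra is locally bounded}), and it is self-injective because $f_s$-BCAs are Frobenius by Proposition \ref{fsBCA-is-Frobenius}. Representation-finiteness implies $\Lambda_E$ is locally representation-finite, whence Proposition \ref{l.r.f. fsBCC is standard} yields that $\Lambda_E \cong kQ'_E/I'_E$, and hence $A$, is standard. Combined with the assumed indecomposability, $A$ is a standard RFS algebra.

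For the reverse inclusion, let $A$ be a standard RFS algebra. I would proceed by cases on the Loewy length. When the Loewy length exceeds $2$, Proposition \ref{A-is-a-f-s-BCA} directly produces a finite $f_s$-BC $E$ with $A_E \cong \overline{A}$; since $A$ is standard, $A \cong \overline{A} \cong A_E$, and indecomposability and representation-finiteness transfer along this isomorphism. The Loewy length $1$ case is excluded by convention ($A \cong k$ is not an RFS algebra).

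The remaining case, Loewy length $2$, requires a small separate construction. A basic indecomposable self-injective algebra with $\mathrm{rad}^2 A = 0$ (and not isomorphic to $k$) must have $Q_A$ equal to a single oriented cycle $1 \xrightarrow{\alpha_1} 2 \xrightarrow{\alpha_2} \cdots \xrightarrow{\alpha_n} 1$ (with the loop case $n=1$ giving $k[x]/(x^2)$), and the admissible ideal is generated by the length-$2$ paths. To realize such $A$ as an $f_s$-BCA I would take $E = \{\alpha_1, \ldots, \alpha_n\}$ with cyclic $G$-action $g \cdot \alpha_i = \alpha_{i+1}$ (indices modulo $n$), trivial partitions $P(\alpha_i) = L(\alpha_i) = \{\alpha_i\}$, and constant degree $d \equiv 1$. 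Axioms $(f1)$--$(f6)$ follow by inspection, and $(f7)$ holds vacuously by Remark \ref{two-conditions-are equivalent}(2) since every standard sequence here is either trivial or a full sequence of length $1$; a direct computation of $Q_E$ and $I_E$ then recovers $A$.

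The chief obstacle is already encapsulated in Proposition \ref{A-is-a-f-s-BCA}, which carries out the delicate identification of socle paths with angles and verifies that the resulting data constitutes an $f_s$-BC. Beyond that, the argument only assembles prior results (Propositions \ref{fsBCA-is-Frobenius}, \ref{l.r.f. fsBCC is standard}, \ref{A-is-a-f-s-BCA}) together with the essentially trivial Loewy length $2$ case; the subtlest bookkeeping is simply to confirm that the standardness hypothesis on $A$ is what allows the passage from the standard form $\overline{A}$ back to $A$ itself.
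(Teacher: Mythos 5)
Your proof is correct and follows essentially the same route as the paper: the same ingredients (Propositions \ref{fsBCA-is-Frobenius} and \ref{l.r.f. fsBCC is standard} for one inclusion, Proposition \ref{A-is-a-f-s-BCA} plus the identical explicit cyclic construction with trivial partitions and $d\equiv 1$ for the Loewy length $2$ case of the other). The only point the paper adds that you omit is the one-line check that $A_E\ncong k$ (via the nonvanishing of the radical, using Proposition \ref{a basis of J} and Theorem \ref{f-BC algebra is locally bounded}), which is needed because RFS algebras are defined to exclude the ground field.
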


\begin{proof}
Let $A$ be a standard RFS algebra. If the Loewy length of $A$ is larger than $2$, then $B:=A^{op}$ is also a standard RFS algebra whose Loewy length is larger than $2$. By Proposition \ref{A-is-a-f-s-BCA}, there exists an $f_s$-BC $E$ with $A_E\cong\overline{B}^{op}$. Since $B$ is standard, the standard form $\overline{B}$ is isomorphic to $B$. Then $A_E\cong B^{op}\cong A$, and $A$ is an $f_s$-BCA. If the Loewy length of $A$ is $2$, then $A$ is given by the quiver
$$
\vcenter{
	\xymatrix@R=1.9pc@C=1.9pc{
	& n \ar[dl]_{\alpha_n} & \ar[l]_{\alpha_{n-1}} \ar@{{}*{\cdot}{}}[r] &  \\
	1  \ar[dr]_{\alpha_1} & & & \\
	& 2 \ar[r]_{\alpha_2} & \ar@{{}*{\cdot}{}}[r]&\\
}}$$
with relations $\alpha_{i+1}\alpha_i=0$, where $i\in\{0,1,\cdots,n-1\}=\mathbb{Z}/n\mathbb{Z}$. Define an $f_s$-BC $E=(E,P,L,d)$ as follows: $E=\{e_0,e_1,\cdots,e_{n-1}\}$ with the $G$-set structure given by $g\cdot e_i=e_{i+1}$, where $i\in\{0,1,\cdots,n-1\}=\mathbb{Z}/n\mathbb{Z}$; the partitions $P$ and $L$ are given by $P(e)=L(e)=\{e\}$ for every $e\in E$; the degree function $d$ is given by $d(e)=1$ for every $e\in E$. Then $A$ is isomorphic to $A_E$, which implies that $A$ is an $f_s$-BCA.

Conversely, let $A_E$ be a finite-dimensional indecomposable representation-finite $f_s$-BCA. By Proposition \ref{l.r.f. fsBCC is standard} and Proposition \ref{fsBCA-is-Frobenius}, $A_E$ is standard and self-injective. Moreover, by Proposition \ref{a basis of J} and Theorem \ref{f-BC algebra is locally bounded}, the radical of $A$ is nonzero, so $A$ is not isomorphic to $k$. Therefore $A_E$ is a standard RFS algebra.
\end{proof}

\begin{Cor}
The class of finite-dimensional representation-finite $f_s$-BCAs is closed under derived equivalence.
\end{Cor}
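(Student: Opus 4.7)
The plan is to deduce this corollary from Theorem~\ref{RFS-algebra=r.f.fsBCA} together with known results on derived equivalences of representation-finite self-injective algebras. A finite-dimensional representation-finite $f_s$-BCA $A$ is self-injective by Proposition~\ref{fsBCA-is-Frobenius}, so Rickard's theorem guarantees that any algebra $B$ derived equivalent to $A$ is again self-injective; moreover, representation-finiteness is preserved under derived equivalence of self-injective algebras (the stable categories are triangle equivalent and the AR-theoretic data is transported accordingly). Since ``closed under derived equivalence'' is understood up to Morita equivalence, I may pass to the basic representative of $B$.

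Next I would decompose $A = \prod_i A_i$ and (the basic version of) $B = \prod_j B_j$ into indecomposable blocks. A tilting complex for $A$ splits as a sum of tilting complexes for the blocks, so the blocks of $A$ and $B$ can be paired off so that corresponding blocks are derived equivalent. By Theorem~\ref{RFS-algebra=r.f.fsBCA}, each nontrivial block $A_i$ is a basic indecomposable standard RFS algebra, so the matched block $B_j$ is a basic indecomposable representation-finite self-injective algebra derived equivalent to a standard RFS algebra. Applying Asashiba's classification of derived equivalence classes of RFS algebras, which shows that the standard/nonstandard dichotomy is a derived invariant (standard RFS algebras are never derived equivalent to nonstandard ones), one concludes that $B_j$ is again a standard RFS algebra. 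Theorem~\ref{RFS-algebra=r.f.fsBCA} then exhibits $B_j$ as a finite-dimensional representation-finite $f_s$-BCA, and taking disjoint unions of the underlying fractional Brauer configurations realizes $\prod_j B_j$ as a single $f_s$-BCA.

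The main obstacle is the invocation of Asashiba's theorem that standardness is preserved under derived equivalence of RFS algebras; this is the nontrivial external input that, combined with Theorem~\ref{RFS-algebra=r.f.fsBCA}, does the real work. The remaining steps (preservation of self-injectivity and representation-finiteness under derived equivalence, the block and Morita reduction, and the disjoint-union assembly of the $f_s$-BCAs at the level of fractional Brauer configurations) are routine once the standardness-preservation input is in place.
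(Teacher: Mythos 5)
Your proposal is correct and follows essentially the same route as the paper, which deduces the corollary in one line from Theorem \ref{RFS-algebra=r.f.fsBCA} together with the fact that the class of standard RFS algebras is closed under derived equivalence (citing Asashiba's classification and Al-Nofayee--Rickard for preservation of self-injectivity). Your additional details on block decomposition, Morita reduction, and reassembly via disjoint unions of fractional Brauer configurations merely flesh out what the paper leaves implicit.
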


\begin{proof}
It follows from Theorem \ref{RFS-algebra=r.f.fsBCA} and the fact that the class of standard RFS algebras is closed under derived equivalence (by \cite{A} and \cite{AlR}).
\end{proof}

We illustrate the discussion in this section with an example.

\begin{Ex1}
Let $A=kQ/I$, where $Q$ is the quiver

$$
\vcenter{
\xymatrix {
	& \bullet \ar[dl]_{\alpha_m} & \ar[l]_{\alpha_{m-1}} \ar@{{}*{\cdot}{}}[r] &  \\
	\bullet \ar@(ul,dl)_{\beta} \ar[dr]_{\alpha_1} & & & \\
	& \bullet \ar[r]_{\alpha_2} & \ar@{{}*{\cdot}{}}[r]&\\
}}$$
$(m\geq 2)$, and $I$ is the ideal of $kQ$ generated by $\alpha_m\cdots\alpha_2\alpha_1-\beta^2$, $\alpha_1\alpha_m$, $\alpha_i\cdots\alpha_1\beta\alpha_m\cdots\alpha_i$ $(1\leq i\leq m)$. $A$ is a standard RFS algebra of type $(D_{3m},\frac{1}{3},1)$. Let $\overline{A}=kQ/I_A$ be the standard form of $A$, where $I_A$ is the ideal of $kQ$ generated by non-stable paths and by differences $v-w$ with $(v,w)$ being a stable contour of $A$. It is straightforward to show that $(\alpha_m\cdots\alpha_2\alpha_1,\beta^2)$ is a stable contour of $A$ and the paths $\alpha_1\alpha_m$, $\alpha_i\cdots\alpha_1\beta\alpha_m\cdots\alpha_i$ $(1\leq i\leq m)$ are non-stable paths. So the ideal $I$ is contained in $I_A$. Since $A$ is standard, $\overline{A}$ is isomorphic to $A$, and $\mathrm{dim}_{k}A=\mathrm{dim}_{k}\overline{A}$. Therefore $I=I_A$.

We now construct the $f_s$-BC $E=(E,P,L,d)$ associated to $\overline{A}$ (see the paragraph before Proposition \ref{A-is-a-f-s-BCA}). The set of paths of $Q$ which belong to soc$\overline{A}-\{0\}$ is $$\{\beta\alpha_m\cdots\alpha_1,\beta^{3},\alpha_m\cdots\alpha_1\beta,\alpha_1\beta\alpha_m\cdots\alpha_2,\alpha_2\alpha_1\beta\alpha_m\cdots\alpha_3,
\cdots,\alpha_{m-1}\cdots\alpha_1\beta\alpha_m\}.$$ Denote $1=e_{\alpha_m\cdots\alpha_1\beta}$, $1'=e_{\beta\alpha_m\cdots\alpha_1}$, $1''=e_{\beta^{3}}$, and $i=e_{\alpha_{i-1}\cdots\alpha_1\beta\alpha_m\cdots\alpha_i}$ for $2\leq i\leq m$, then $E=\{1,1',1'',2,\cdots,m\}$ with the $G$-set structure given by $g\cdot 1=1'$, $g\cdot 1'=2$, $g\cdot 2=3$, $\cdots$, $g\cdot (m-1)=m$, $g\cdot m=1$, $g\cdot 1''=1''$. The partition $P$ is given by $P(1)=\{1,1',1''\}$ and $P(e)=\{e\}$ for other $e\in E$. The partition $L$ is given by $L(1)=\{1,1''\}$ and $L(e)=\{e\}$ for other $e\in E$. The degree function $d$ is given by $d(1'')=3$ and $d(e)=m+1$ for other $e\in E$. $E$ can be visualized by the following diagram, where $(3)$ and $(m+1)$ denote the degrees and the small arcs at the angles $1$ and $1''$ mean that these two angles belong to the same class of the partition $L$.

\begin{center}
\tikzset{every picture/.style={line width=0.75pt}}
\begin{tikzpicture}[x=15pt,y=15pt,yscale=1.8,xscale=1.8]
\fill (0,5) circle (0.5ex);
\fill (0,0) circle (0.5ex);
\draw    (0,0) -- (2,-1) ;
\draw    (0,0) -- (1.5,-1.5) ;
\draw    (0,0) -- (-2,-1) ;
\draw    (0,0) -- (-1.5,-1.5) ;
\draw    (0,0) .. controls (-3,0) and (-2,3) .. (0,5) ;
\draw    (0,0) .. controls (3,0) and (2,3) .. (0,5) ;
\draw    (0,0) .. controls (-2,2) and (2,2) .. (0,0) ;
\node at(0,4.2) {$1''$}; \draw (-0.16,4.8) arc (215:325:0.2);
\node at(0.5,5.2) {\tiny$(3)$}; \node at(0,-0.6) {\tiny$(m+1)$};
\node at(0,3) {$P(1)$};
\node at(-1,0.8) {$1$}; \draw (-0.16,0.19) arc (100:170:0.2);
\node at(1,0.8) {$1'$};
\node at(1.5,-0.4) {$2$};
\node at(1.7,-1.3) {$3$};
\node at(-1.5,-0.4) {$m$};
\node at(-2,-1.5) {$m$-$1$};
\fill (-0.5,3.7) circle (0.1ex);
\fill (0.5,3.7) circle (0.1ex);
\fill (-1.5,2.3) circle (0.1ex);
\fill (-0.5,2.3) circle (0.1ex);
\fill (0.5,2.3) circle (0.1ex);
\fill (1.5,2.3) circle (0.1ex);
\fill (-1.5,1.5) circle (0.1ex);
\fill (-0.5,1.5) circle (0.1ex);
\fill (0.5,1.5) circle (0.1ex);
\fill (1.5,1.5) circle (0.1ex);
\fill (0,-2) circle (0.1ex);
\fill (-0.5,-1.9) circle (0.1ex);
\fill (0.5,-1.9) circle (0.1ex);
\end{tikzpicture}
\end{center}

\medskip
Let $A_E$ be the associated f-BCA. Then we have the following structure of the indecomposable projective $A_{E}^{op}$-modules:

\medskip
$P_1=$ \xymatrix@R=0.6pc@C=0.8pc {&1\ar@{-}[dl]\ar@{-}[dr]& \\
1 \ar@{-}[d]\ar@{-}[ddddrr]& & 2\ar@{-}[d] \\
2 \ar@{.}[dd] &  & 3 \ar@{.}[dd] \\
&& \\
m-1\ar@{-}[d] & & m\ar@{-}[d] \\
m \ar@{-}[dr]& & 1\ar@{-}[dl] \\
& 1 & }, $P_2=$ \xymatrix@R=0.6pc@C=0.8pc { 2 \ar@{-}[d]\\
 3 \ar@{.}[dd]\\
 \\
 m \ar@{-}[d]\\
 1 \ar@{-}[d]\\
 1\ar@{-}[d] \\
 2}, $\dots$, $P_m=$ \xymatrix@R=0.6pc@C=0.8pc { m \ar@{-}[d]\\
 1 \ar@{-}[d]\\
 1\ar@{-}[d] \\
 2 \ar@{-}[d]\\
3 \ar@{.}[dd] \\
\\
 m}.\\

Note that there is a standard sequence $p=(1'',1'')$ associated to the vertex $G\cdot 1''$ and there is a standard sequence $q=(m,\cdots,3,2,1')$ associated to the vertex $G\cdot 1$. Since $L(1)=\{1,1''\}$, the left complements $\prescript{\wedge}{}{p}$ and $\prescript{\wedge}{}{q}$ are identical, we have a relation $L(p)-L(q)$ of type $(fR1)$, which corresponds
to the relation $\beta^{2}-\alpha_m\cdots\alpha_1$ in $\overline{A}$.
\end{Ex1}

\end{document}